\def\and{\quad{\rm and}\quad}
\def\<{\langle}
\def\>{\rangle}
\title[Prescribed curvature measure problem]{Prescribed curvature measure problem in hyperbolic space}
\author{Fengrui Yang
        }
\address{Department of Mathematics and Statistics, McGill University, 805 Sherbrooke O, Montreal, Quebec, Canada, H3A 0B9}
\email{fengrui.yang@mail.mcgill.ca}
\email{}
\thanks{Research of the author was supported by CSC fellowship. }
\newtheorem{theorem}{Theorem}
\newtheorem{lemma}{Lemma}
\begin{document}
\begin{abstract}
 The problem of the prescribed curvature measure is one of the important problems in differential geometry and nonlinear partial differential equations. In this paper, we consider prescribed curvature measure problem in hyperbolic space.
We obtain the existence of star-shaped k-convex bodies with prescribed (n-k)-th curvature measures $(k<n)$ by establishing crucial $C^2$ regularity estimates for solutions to the corresponding fully nonlinear PDE in the hyperbolic space.
\end{abstract}
\subjclass{35J60, 35B45}
\date{\today}
\maketitle

\section{Introduction}

This paper concerns the general prescribing curvature measures problem in hyperbolic space $ \mathbb{H}^{n+1}$. Curvature measures
and area measures are two main subjects in convex geometry. They are the local versions of quermassintegrals in the
Brunn-Minkowski theory. They are closely related to the differential geometry and integral geometry of convex surfaces. We first recall the definition of curvature measures and area measures in classical convex geometry in $\mathbb{R}^{n+1}$ (c.f.\cite{15}).
\par
Suppose K is a convex body in $\mathbb{R}^{n+1}$. There are two notions of local parallel sets: given any Borel set $\beta\in \mathcal{B}(\mathbb{R}^{n+1})$, consider
\begin{equation} \label{parallel1}
A_{\rho}(K,\beta)=\{x\in \mathbb{R}^{n+1}|0<d(K,x)\leq\rho, p(K,x)\in\beta\}
\end{equation}
which is the set of all points $x\in\mathbb{R}^{n+1}$ for which the distance $d(K,x)\leq\rho$ and for which the nearest point
$p(K,x)$ belongs to $\beta$. Alternatively, one may prescribe a Borel set $\omega\subset S^{n}$ of unit vectors and then consider
\begin{equation} \label{parallel2}
B_{\rho}(K,\omega)=\{x\in \mathbb{R}^{n+1}|0<d(K,x)\leq\rho, u(K,x)\in\omega\}
\end{equation}
which is the set of all $x\in \mathbb{R}^{n+1}$ for which $d(K,x)\leq\rho$ and for which the unit vector $u(K,x)$ pointing from $p(K,x)$ to $x$ belongs to $\omega$.
\par
A key observation in convex geometry is that the measures of the above local parallel sets are polynomials in the parameter $\rho$.
More precisely, we have the following Steiner formulae in $\mathbb{R}^{n+1}$:
\begin{equation} \label{vol}
Vol(A_{\rho}(K,\beta))=\frac{1}{n+1}\sum_{m=0}^{n}\rho^{n+1-m}C_{n+1}^{m}\cdot\mathcal{C}_{m}(K,\beta)
\end{equation}
\begin{equation} \label{area1}
Vol(B_{\rho}(K,\omega))=\frac{1}{n+1}\sum_{m=0}^{n}\rho^{n+1-m}C_{n+1}^{m}\cdot\mathcal{S}_{m}(K,\omega)
\end{equation}
for $\beta\in \mathcal{B}(\mathbb{R}^{n+1})$, $\omega\in \mathcal{B}(\mathbb{S}^{n})$, and $\rho>0$. Here $C_{n+1}^{m}=\tbinom{n+1}{m}$.

\par
The Steiner formulae (\ref{vol}) and (\ref{area1}) provide excellent controls on the volume and volume growth of parallel sets.
Therefore, the coefficients defined by (\ref{vol}) and (\ref{area1}) yield fundamental geometric information
on the body K. The measure $\mathcal{C}_{0}(K,\cdot)$, $\cdots$, $\mathcal{C}_{n}(K,\cdot)$ are called curvature measures
of the convex body K, and $\mathcal{S}_{0}(K,\cdot)$, $\cdots$, $\mathcal{S}_{n}(K,\cdot)$ are called area measures
of K. It's worth to note that when K is bounded and with $C^{2}$ boundary M. Let $\kappa=(\kappa_{1},\cdots,\kappa_{n})$
be the principal curvatures of M at point x, let $r=(r_{1},\cdots,r_{n})$
be the principal curvature radii, and let $\sigma_{k}$ be the k-th elementary symmetric function. Then we have the following expressions
of m-th curvature measure and area measure:
\begin{equation} \label{curvature1}
\mathcal{C}_{m}(K,\beta)=\frac{1}{C_{n}^{n-m}}\int_{\beta\bigcap M}\sigma_{n-m}(\kappa)d\mu_{g}
\end{equation}
\begin{equation} \label{area2}
\mathcal{S}_{m}(K,\omega)=\frac{1}{C_{n}^{m}}\int_{\omega}\sigma_{m}(r)d\mathbb{S}^{n}
\end{equation}
where $d\mu_{g}$ is the volume element with respect to the induced metric $g$ of M in $\mathbb{R}^{n+1}$, and $d\mathbb{S}^{n}$ is the
volume element of the standard spherical metric.
\par
The Minkowski problem is a problem of prescribing a given n-th area measure. The general Christoffel-Minkowski problem is a problem of prescribing
a given k-th area measure. There is a vast literature devoted to the study of these types of problems, and we refer to
\cite{ex1}, \cite{ex3}, \cite{ex6}, \cite{ex8}, \cite{8}, \cite{ex14}, \cite{ex15}, \cite{ex17}, \cite{ex19}, and \cite{ex12} and the references therein.
We note that the area measures in Euclidean space (\ref{parallel2}), (\ref{area2}) are defined on $\omega\in \mathcal{B}(\mathbb{S}^{n})$ via Gauss map. This implies area measures may not be natural in other space forms due to the invalidity of classical Gauss map. The curvature measures, on the other hand, are defined on the $\beta\in \mathcal{B}(\mathbb{R}^{n+1})$ (\ref{parallel1}), (\ref{curvature1}). So it's possible to study curvature measures in other space forms like $\mathbb{H}^{n+1}$. Our focus in this paper is the corresponding Christoffel-Minkowski problem for the curvature measures, to be specific, the problem of prescribing curvature measures in $\mathbb{H}^{n+1}$.
\par
 The study of curvature measures for more general sets and spaces was carried out by Allendoerfer\cite{2} for  space forms
under strong differentiability assumptions, and by
Federer\cite{5} for sets of positive reach. Sets of positive reach are generalization of convex sets and smooth submanifolds.
Years later, the theory of curvature measures and Steiner formulae for parallel bodies of sets of positive reach in Euclidean space was generalized to space forms by Kohlmann\cite{12}, see also \cite{16}. Besides, Veronelli\cite{16} proved some properties of
curvature measures in hyperbolic space. Therefore, it's natural for us to study curvature measures in the hyperbolic spaces.
We first introduce the definition of curvature measures in $\mathbb{H}^{n+1}$.
\par
Let $\mathbb{K}(\mathbb{H}^{n+1})$ be the set of compact convex sets on $\mathbb{H}^{n+1}$ with non-empty interior. For any $K\in \mathbb{K}(\mathbb{H}^{n+1})$, and
$\rho>0$, define set
\begin{eqnarray*}
K_{\rho}  &=&  \{ x\in \mathbb{H}^{n+1}| d_{\mathbb{H}^{n+1}}(x,K)\leq \rho\}
\end{eqnarray*}

The map $f_{K}: \mathbb{H}^{n+1}\backslash K\rightarrow \partial K$ is defined by
\begin{eqnarray*}
d_{\mathbb{H}^{n+1}}(f_{K}(x),x)=d_{\mathbb{H}^{n+1}}(x,K)
\end{eqnarray*}
and is well-defined because K is convex. For $\beta\in \mathbb{H}^{n+1}$, define also
\begin{eqnarray*}
M_{\rho}(K,\beta)=f_{K}^{-1}(\beta\cap \partial K)\cap (K_{\rho}\backslash K)
\end{eqnarray*}
Following [Kohlmann\cite{12},Allendoerfer\cite{2}], define a Radon measure $\mu_{\rho}$ on the Borel $\sigma$-algebra of hyperbolic space
$\mathcal{B}(\mathbb{H}^{n+1})$ by
\begin{eqnarray*}
\mu_{\rho}(K,\beta)=Vol_{\mathbb{H}^{n+1}}(M_{\rho}(K,\beta))
\end{eqnarray*}
Set $l_{n+1-r}(t)=\int_{0}^{t}\sinh^{n-r}(x)\cosh^{r}(x)dx \qquad r=0,\cdots, n$, then the following Steiner-type formula exists.[\cite{2},\cite{12}].
\begin{equation} \label{curvature2}
\mu_{\rho}(K,\beta)=\sum_{r=0}^{n}l_{n+1-r}(\rho)\Phi_{r}(K,\beta)\qquad \forall\beta\in \mathcal{B}(\mathbb{H}^{n+1})
\end{equation}
When $\eta=\partial K\cap \beta$ is a $C^{3}$ surface, $\Phi_{r}(K,\cdot)$ has following nice expression:
\begin{equation} \label{1}
\Phi_{r}(K,\beta)=\int_{\eta}\sigma_{n-r}^{K}(q)d\mu_{g}(q)
\end{equation}
where $\mu_{g}$ is the surface measure on $\partial K$ induced by $Vol_{\mathbb{H}^{n+1}}$. $\sigma_{n-r}^{K}(q)$
is (n-r)-th elementary symmetric function of the principal curvatures of $\partial K$ at q.
\par
$\Phi_{r}(K,\beta)$ is called r-th curvature measure of convex body K.
\par
The curvature measures defined in (\ref{curvature2}) and (\ref{1}) not only contain information about growth of parallel bodies and surfaces, but
also give insight into topological structures as in the Euclidean case Federer already has observed in \cite{5}. Besides, curvature measures
have strong connections with Euler characteristics. All these imply that the curvature measures defined in (\ref{curvature2}) and (\ref{1})
contain important geometric information. Therefore, it's natural for us to ask prescribed curvature measure problems in space forms like
R.Schneider (see note 8 on page 396 of \cite{15}).
\par
Currently, most work regarding prescribed curvature measure problems were done in Euclidean space. Very little is known in other space forms.
The problem of prescribing 0-th curvature measure is called Alexandrov problem, which is a counter part to Minkowski problem. This problem is equivalent to solve a Monge-Amp$\grave{e}$re type equation on $S^{n}$. In $\mathbb{R}^{n+1}$, the existence and uniqueness were obtained by Alexandrov\cite{1}. The regularity of the Alexandrov problem in elliptic case was proved by Pogorelov\cite{14} for n=2 and by Oliker\cite{13} for higher dimension case.
The general regularity results (degenerate case) of the problem were obtained in Guan-Li\cite{8}. Besides, with certain assumptions of f, Guan-Lin-Ma\cite{9} obtained the existence and regularity of convex solution of prescribed k-th ($k>0$) curvature measure problem.
\par
The general problem of prescribing k-th ($k>0$) curvature
measure is an interesting counterpart of the Christoffel-Minkowski problem, that is, the existence problem of (n-k)-convex solution of prescribed k-th
($k>0$) curvature measure. Guan-Li-Li\cite{7} solved this problem in $\mathbb{R}^{n+1}$ space. But the relevant existence problem of (n-k)-convex solution of prescribed k-th
curvature measure ($\forall k>0$) in hyperbolic space has been open until now. The main contribution of this paper is to resolve this open problem completely.

\medskip

 The corresponding nonlinear PDE of the prescribing curvature measure problem is of the following form,
\begin{equation} \label{2}
\sigma_{k}(\kappa_{1},\cdots,\kappa_{n})=uf
\end{equation}
where u is the support function of hypersurface M.
\par
 This is the same nonlinear PDE as in the case of the prescribing curvature measure problem in $\mathbb R^{n+1}$. Equation (\ref{2}) has the same difficulty as in the case of $\mathbb R^{n+1}$, the involvement of normal vector field $\nu$ on the right hand side. In $\mathbb{R}^{n+1}$, the key is the $C^{2}$ estimate of (\ref{2}) in Guan-Li-Li\cite{7}. The major difference in $\mathbb H^{n+1}$ case is the presence of the negative curvature of the ambient space, which complicates the $C^2$ estimates for equation (\ref{2}). To be more specific, term $Kh_{11}$ exists when switch $h_{11ii}$ into $h_{ii11}$, where K is the sectional curvature of space form.
In $\mathbb{R}^{n+1}$, K=0, this term doesn't exist. Yet in hyperbolic space, this term becomes $-h_{11}$, a problematic term that need to be handled.
\par
 If the right hand side of equation (\ref{2}) is independent of $\nu$, that is the equation of prescribed curvature.  Jin-Li\cite{11} studied
  this type of equation and solved prescribed curvature problem in $\mathbb{H}^{n+1}$. They gave a nice method in handling $C^{2}$ estimate of the prescribed curvature equation from which we benefit a lot.
 Recently, a very important result was made by Guan-Ren-Wang\cite{10}. They gave the $C^{2}$ estimate for convex solution of general curvature measure equation
\begin{eqnarray} \label{222}
\sigma_{k}(\kappa_{1},\cdots,\kappa_{n})=f(x,\nu(x))
\end{eqnarray}
and also a proof for 2-convex solution in the case $k=2$. Besides, Ren-Wang\cite{RW} obtained $C^{2}$ estimate for (n-1)-convex solution of (\ref{222}) in the case $k=n-1$.
In the case of $k=2$, Spruck-Xiao \cite{spruckX} obtained $C^2$ estimates for solutions of equation (\ref{222}) in general space forms. Thus, $C^2$ estimate for equation (\ref{2}) is verified for $k=2$ in $\mathbb H^{n+1}$. Their paper\cite{spruckX} not only gave a simpler proof for the scalar curvature case of Guan-Ren-Wang\cite{10}, but also have a large impact on our $C^{2}$ estimate.
\par
In this paper, we will present a $C^{2}$ estimate for the general k-convex solutions of prescribed curvature measure equation (\ref{2}) ($\forall1\leq k\leq n$).

\medskip

We note the curvature measures defined in (\ref{1}) only require $\partial K$ to be $C^{3}$. This implies we do not necessarily ask K to be convex if $r>0$ in (\ref{1}). And it's indeed possible to study curvature measure for more general sets. In the work of Alexandrov\cite{1}, the curvature measures are prescribed on $S^{n}$ via a radial map. Under the radial parametrization of $\Omega$, the star-shaped domains are natural class for us to study. So in the rest of this paper, we will prove existence theorem of the prescribing general k-th curvature measure problem with $k>0$ on bounded $C^{2}$ star-shaped domains in hyperbolic space.

\medskip

Let M be a bounded star-shaped domain. Therefore, it can be parametrized by a graph $\Sigma=\{ (\rho(\theta),\theta)|\theta\in S^{n}\}$.
Denote
\begin{eqnarray} \label{3}
 R_{M} & & S^{n}\rightarrow M \nonumber  \\
       & & \theta\rightarrow (\rho(\theta),\theta)
\end{eqnarray}
The (n-k)-th curvature measure of $\mathbb{H}^{n+1}$ space on each Borel set $\beta$ in $S^{n}$ can be defined as
\begin{equation} \label{4}
C_{n-k}(M,\beta)=\int_{R_{M}(\beta)}\sigma_{k}(\kappa)d\mu_{g}
\end{equation}
\par
Then the problem of prescribing (n-k)-th curvature measure is:

\medskip

{\it Question:} Given a positive function $f\in C^{2}(S^{n})$, find a closed hypersurface M which can be parametrized like (\ref{3}), such that
\begin{equation} \label{5}
C_{n-k}(M,\beta)=\int_{\beta}fd\mu_{S^{n}}
\end{equation}
 for every Borel set $\beta$ in $S^{n}$.
\par
As later we shall see, due to parametrization (\ref{3}), the prescribed curvature measure problem for star-shaped domain can be reduced
to the following curvature type nonlinear partial differential equation of $\rho$ on $S^{n}$:
\begin{equation} \label{6}
\sigma_{k}(\kappa_{1},\cdots,\kappa_{n})=\frac{f}{\phi(\rho)^{n-1}\sqrt{\phi(\rho)^{2}+|\nabla \rho|^{2}}}
\end{equation}
Here $\kappa=(\kappa_{1},\cdots,\kappa_{n})$ is principal curvature vector of M, $\phi(\rho)=\sinh(\rho)$. We say M is k-convex if
$\kappa(x)=(\kappa_{1}(x),\cdots,\kappa_{n}(x))\in\Gamma_{k}$ at every point $x\in M$. It's also worth
to note that equation (\ref{6}) is a special type of fully nonlinear partial differential equation studied in the pioneer work by
Caffarelli-Nirenberg and Spruck\cite{3}.
\par
We now state our main theorem:

\medskip

\begin{theorem} \label{T0}
 Let $n\geq 2$, $1\leq k\leq n-1$. Suppose $f\in C^{2}(S^{n})$ and $f>0$. Then
there exists a unique k-convex star-shaped hypersurface $M\in C^{3,\alpha}$, such that it satisfies (\ref{6}).
\par
Moreover, there is a constant C only depending on k,n,$||f||_{C^{2}}$, $inf(f)$, and $\alpha$, such that
\begin{eqnarray*}
||\rho||_{C^{3,\alpha}}\leq C
\end{eqnarray*}
\end{theorem}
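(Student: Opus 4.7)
I would prove Theorem \ref{T0} by the method of continuity applied to the scalar equation (\ref{6}) on $S^{n}$. Choose a constant $f_{0}$ so that some geodesic sphere $\rho\equiv\rho_{0}$ is a $k$-convex solution, and interpolate $f_{t}:=(1-t)f_{0}+tf$ for $t\in[0,1]$. Let $I\subset[0,1]$ be the set of parameters admitting a $k$-convex star-shaped $C^{3,\alpha}$ solution. Openness follows from the implicit function theorem, since $\sigma_{k}^{1/k}$ is concave on $\Gamma_{k}$ and its linearization at a $k$-convex solution is uniformly elliptic (if the linearization has a nontrivial kernel, a standard degree-theoretic workaround or a modified continuity family can be substituted). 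The problem then reduces to establishing uniform a priori $C^{3,\alpha}$ bounds along the family.

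\textbf{Zero- and first-order bounds.} At a maximum (resp. minimum) of $\rho$ on $S^{n}$, $\nabla\rho=0$, so the principal curvatures of $M$ all equal $\coth(\rho)$ and equation (\ref{6}) becomes the algebraic relation $\binom{n}{k}\coth^{k}(\rho)=f/\sinh^{n}(\rho)$. Since $f$ is bounded away from $0$ and $\infty$, this yields the two-sided bound $c\le\rho\le C$. For the gradient estimate I would apply the maximum principle to $|\nabla\rho|^{2}/2$ (or to an equivalent quantity built from the support function), using the $C^{0}$ bound and the uniform positivity of $\phi(\rho)$ to close the estimate.

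\textbf{The main obstacle: $C^{2}$ estimate.} This is the heart of the proof. I would work with a test function of the form
\begin{equation*}
W=\log \lambda_{\max}(h_{ij})+\Psi(\langle V,\nu\rangle)+\Phi(\rho),
\end{equation*}
where $V=\sinh(\rho)\partial_{\rho}$ is the conformal Killing field in $\mathbb{H}^{n+1}$ (the hyperbolic analogue of the position vector) and $\Psi,\Phi$ are convex auxiliary functions tuned in the spirit of Guan-Ren-Wang. Differentiating (\ref{6}) twice at a maximum of $W$ and eliminating $\sigma_{k}^{ij}h_{ij,11}$ via the equation leaves a commutator $\sigma_{k}^{ii}(h_{11,ii}-h_{ii,11})$; in $\mathbb{R}^{n+1}$ this vanishes, but on a hypersurface in $\mathbb{H}^{n+1}$ the Ricci identity contributes the problematic term $-\sigma_{k}^{ii}h_{11}$ flagged in the introduction. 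The plan is to absorb it using the positive contributions from $\nabla^{2}\langle V,\nu\rangle$ and $\nabla^{2}\rho$, each of which produces an $h_{ii}$-type term with favourable sign thanks to the hyperbolic structure equations; this strategy was developed by Jin-Li for prescribed curvature and by Spruck-Xiao in the $k=2$ case, and the main technical task is to make it run for every $1\le k\le n-1$ with the support-function-dependent right-hand side of (\ref{2}). The remaining cubic-in-$h$ terms are handled by concavity of $\sigma_{k}^{1/k}$ exactly as in the Euclidean argument of Guan-Li-Li.

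\textbf{Higher regularity and uniqueness.} Once $\lambda_{\max}(h_{ij})$ is bounded, (\ref{6}) is uniformly elliptic and concave in the Hessian on $\Gamma_{k}$; Evans-Krylov yields $C^{2,\alpha}$ and Schauder theory bootstraps to $C^{3,\alpha}$, closing the continuity method with the claimed constant. For uniqueness, if $\rho_{1},\rho_{2}$ both solve (\ref{6}), then at an interior maximum of $\rho_{1}-\rho_{2}$ the monotonicity of $\sigma_{k}$ on $\Gamma_{k}$ combined with the explicit $\phi(\rho)^{n-1}$ factor on the right-hand side forces $\rho_{1}\le\rho_{2}$, and reversing roles gives equality.
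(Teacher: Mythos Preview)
Your overall architecture---continuity method, $C^0$ and $C^1$ via maximum principle, then Evans--Krylov and Schauder---is exactly what the paper does, and your test function $\log\lambda_{\max}+\Psi(\langle V,\nu\rangle)+\Phi(\rho)$ is equivalent to the paper's $\log\bigl(\lambda_{1}e^{\beta\Phi}/(u-a)\bigr)$. You also correctly locate the hyperbolic commutator term and observe that the auxiliary $\Phi(\rho)$ contributes a $\beta\phi'\sum_i\sigma_k^{ii}$ which absorbs the $-\sum_i\sigma_k^{ii}$ coming from the ambient curvature; this is precisely the role of the factor $(\beta\phi'-1)\sum_i\sigma_k^{ii}$ in the paper's inequality $(\clubsuit 1)$.

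The gap is in the sentence ``the remaining cubic-in-$h$ terms are handled by concavity of $\sigma_k^{1/k}$ exactly as in the Euclidean argument of Guan--Li--Li.'' This is where essentially all of the paper's work lies, and concavity of $\sigma_k^{1/k}$ by itself is not enough. After the commutator step one is left with the Hessian term $-\sigma_k^{ij,mt}h_{ij1}h_{mt1}/\lambda_1$ together with $-\sum_i\sigma_k^{ii}h_{11i}^2/\lambda_1^2$, and using only $\sigma_k^{1/k}$-concavity these produce a negative contribution of order $-C\lambda_1$ (via $(\sigma_k)_1=(uf)_1$, which is itself of order $\lambda_1$) that cannot in general be dominated by the favourable $(k-1)uf\,\lambda_1$. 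The paper resolves this with three ingredients you do not mention: (i) the Brendle--Choi--Daskalopoulos device for $\lambda_{\max}$, which produces the \emph{extra} nonnegative third-order terms $2\sum_{l>1}\sigma_k^{ii}h_{1li}^2/(\lambda_1-\lambda_l)$ in the second variation; (ii) a case decomposition according to how many of $\lambda_2,\dots,\lambda_{k-1}$ are comparable to $\lambda_1$; and (iii) in Case $l{+}1$, the sharper concavity of $(\sigma_k/\sigma_l)^{1/(k-l)}$ in place of $\sigma_k^{1/k}$, together with quantitative facts of the type $\sigma_l\sim\lambda_1^{l}$ and $\sigma_l^{11}\lambda_1/\sigma_l\to 1$ in that regime. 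The extra terms from (i) are exactly what absorb the residual errors $\sum_{j\ge l+1}h_{jj1}^2/\lambda_1^{2}$ left over after (iii), via a lemma showing $\sigma_k^{jj}\ge L/\lambda_1$ once $\lambda_{l+1}\le\lambda_1/C_l$ for $C_l$ large. None of this is a routine repetition of the Euclidean argument; the introduction explicitly flags the BCD extra third-order terms as ``extremely important'' and the case separation as the core of the proof.

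Two small side remarks. Your hedge about the linearization possibly having kernel is unnecessary: the paper computes the linearized operator and shows its zeroth-order coefficient is strictly positive (equations \eqref{6.4}, \eqref{6.6}), so invertibility is immediate. And your uniqueness sketch is morally the paper's, though the paper carries it out by writing the difference of the two equations as an integral over the segment joining the two solutions so that the maximum principle applies directly to a linear elliptic operator with positive zeroth-order term.
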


\medskip

The crucial part of proving theorem \ref{T0} is to establish $C^{2}$ estimate for (\ref{2}) or (\ref{6}). Here we use the trick from Brendle-Choi-Daskalopoulos\cite{brendle} where they use smooth function to approximate the smallest eigenvalue $\lambda_{n}$. We use the same method to approximate the largest eigenvalue $\lambda_{1}$. The advantage of $\lambda_{1}$ is that it has extra good third-order derivative terms after differentiate
it twice. These extra good third-order terms are extremely important in our $C^{2}$ estimate. After that, we separate several cases carefully and finish $C^{2}$ estimate step by step.

\medskip

Theorem \ref{T0} gives a complete proof for the existence and uniqueness of star-shaped k-convex body with prescribed (n-k)-th curvature measure
$(k<n)$. When $k=n$, this is the Alexandrov problem in hyperbolic space, i.e. the prescribed 0-th curvature measure problem. At this case, the only difficulty comparing to Theorem \ref{T0} is the lack of lower positive $C^{0}$ estimate. In fact, as we shall see in Theorem \ref{T2}, if
$\max_{S^{n}}(f)<1$, there is no solution which satisfies (\ref{6}) ($k=n$, $\kappa\in \Gamma_{n}$). Therefore, in the prescribed 0-th curvature measure
problem, extra condition on $f$ is necessary! In section 5, we will give a proof of the existence and uniqueness of convex body with prescribed 0-th curvature measure under extra condition $\inf_{S^{n}}(f)>1$. We note that the relevant existence problem $(k=n)$, when $f$ is endowed with other
geometric conditions, is an interesting remaining question.

\medskip

The rest of this paper is organized as follows. In section 2, we state some useful properties regarding elementary symmetric function. In section 3, we derive equation (\ref{6}) and establish $C^{0}$ and $C^{1}$ estimates. In section 4, we establish the crucial $C^{2}$ estimate.
In the section 5, we finish the proof of the existence and uniqueness of prescribed curvature measure problem in hyperbolic space.

\medskip

We would like to thank Prof. Junfang Li for all his inspiring discussions and helpful comments. We are extremely grateful to Prof. Pengfei Guan for his supervision and all his important advice.

\section{Preliminaries}

In this section, we state some Lemmas regarding elementary symmetric functions which we are going to use in the following sections.
\par
For $1\leq k\leq n$, $\lambda=(\lambda_{1},\cdots,\lambda_{n})\in \mathbb{R}^{n}$,
\begin{eqnarray*}
\sigma_{k}(\lambda)=\Sigma_{1\leq i_{1}<i_{2}<\cdots <i_{k}\leq n}\underline{}\lambda_{i_{1}}\lambda_{i_{2}}\cdots\lambda_{i_{k}}
\end{eqnarray*}
\par
For $n\times n$ symmetric matrix, $W=\{W_{ij}\}$. Let $\lambda(W)=(\lambda_{1}(W),\cdots,\lambda_{n}(W))$ be eigenvalues of W. Then define
\begin{eqnarray*}
\sigma_{k}(W)=\sigma_{k}(W_{ij})=\sigma_{k}(\lambda(W))=\Sigma_{1\leq i_{1}<i_{2}<\cdots <i_{k}\leq n}\lambda_{i_{1}}(W)\lambda_{i_{2}}(W)\cdots\lambda_{i_{k}}(W)
\end{eqnarray*}
\par
Let $\sigma_{k}(W|i)$ be the symmetric function with W deleting the i-row and i-column and
$\sigma_{k}(W|ij)$ be the symmetric function with W deleting the i,j-row and i,j-column.
\par
\begin{lemma} \label{lemma1}
 Suppose $W=\{W_{ij}\}$ is diagonal. $1\leq k\leq n$. Then
\begin{eqnarray*}
\frac{\partial\sigma_{k}(W)}{\partial W_{ij}}&=&\left\{
\begin{aligned}
\sigma_{k-1}(W|i)&& if&& i=j\\
0 && if && i\neq j
\end{aligned}
\right.
\\
\frac{\partial^{2}\sigma_{k}(W)}{\partial W_{ij}\partial W_{sl}}&=&\left\{
\begin{aligned}
\sigma_{k-2}(W|is)&& if&& i=j,s=l,i\neq s\\
-\sigma_{k-2}(W|is)&& if&& i=l,j=s,i\neq j\\
0 && && otherwise
\end{aligned}
\right.
\end{eqnarray*}
\par
\end{lemma}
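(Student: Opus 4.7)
The plan is to argue directly from the principal-minor representation
\[
\sigma_{k}(W) \;=\; \sum_{\substack{I\subset\{1,\ldots,n\} \\ |I|=k}} \det(W_{I}),
\]
where $W_{I}$ denotes the $k\times k$ principal submatrix of $W$ indexed by $I$. Treating the entries $W_{ij}$ as independent variables, I will apply the Leibniz formula
\[
\det(W_{I}) \;=\; \sum_{\tau\in S_{I}} \operatorname{sgn}(\tau)\prod_{i\in I} W_{i,\tau(i)}
\]
to write $\sigma_{k}(W)$ as an explicit polynomial in the matrix entries, so that the required partial derivatives can be read off. The virtue of the Leibniz expansion is that, when evaluated at a diagonal $W$, a monomial $\prod_{i\in I} W_{i,\tau(i)}$ survives only if $\tau$ fixes every index still appearing in the product, which drastically restricts the admissible permutations.

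For the first-order formulas, differentiating in $W_{pq}$ selects permutations $\tau\in S_{I}$ with $\tau(p)=q$. On a diagonal $W$ the remaining factor forces $\tau$ to be the identity on $I\setminus\{p\}$; if $p\neq q$ this is incompatible with $\tau(p)=q$, so the derivative vanishes, while if $p=q$ only $\tau=\operatorname{id}$ contributes and summing over $I\ni p$ with $|I|=k$ recovers $\sigma_{k-1}(W|p)$.

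For the second-order formulas the same idea applies. Differentiating in $W_{pq}$ and $W_{rs}$ with $(p,q)\neq(r,s)$ selects permutations $\tau$ with $\tau(p)=q$ and $\tau(r)=s$, and on a diagonal $W$ every other index of $I$ must be a fixed point of $\tau$. A short case analysis yields exactly two compatible configurations: (a) $p=q$ and $r=s$ with $p\neq r$, where $\tau=\operatorname{id}$ contributes with sign $+1$ and summation over $I\supset\{p,r\}$ produces $\sigma_{k-2}(W|pr)$; and (b) $p=s$, $q=r$, with $p\neq q$, where $\tau$ must be the transposition $(p\,q)$, contributing sign $-1$ and summation produces $-\sigma_{k-2}(W|pq)$. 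Any other pattern for $(p,q,r,s)$ forces incompatible conditions on $\tau$ and contributes zero.

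The argument is essentially combinatorial bookkeeping, so I do not expect a serious obstacle. The only delicate point is correctly tracking the sign $-1$ coming from the transposition $(p\,q)$ in case (b); this sign is exactly what produces the asymmetry between the two nonvanishing second-derivative expressions in the statement.
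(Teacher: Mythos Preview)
Your argument is correct. Note, however, that the paper does not actually prove this lemma: it is stated as a standard identity for derivatives of $\sigma_{k}$ on symmetric matrices and used throughout without justification (such formulas appear, e.g., in Guan's lecture notes \cite{6} cited by the paper). So there is no ``paper's proof'' to compare against; your principal-minor/Leibniz expansion supplies a clean self-contained derivation that the paper omits.

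One small omission worth patching: in the second-order computation you explicitly assume $(p,q)\neq(r,s)$, so the repeated-variable case $\partial^{2}\sigma_{k}/\partial W_{pq}^{2}$ is not literally covered by your case analysis. This is trivially zero, since every monomial $\prod_{i\in I}W_{i,\tau(i)}$ in the Leibniz expansion has degree at most one in any fixed entry $W_{pq}$ (each row index $i\in I$ contributes exactly one factor), and it falls under ``otherwise'' in the lemma. With that one sentence added, the proof is complete.
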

$\lambda=(\lambda_{1}(W),\cdots,\lambda_{n}(W))$. When W is diagonal, suppose $\lambda_{i}=W_{ii}$, then
\begin{eqnarray*}
\sigma_{k-1}(W|i)&=&\sigma_{k-1}(\lambda|i)=\frac{\partial\sigma_{k}(\lambda)}{\partial\lambda_{i}}\\
\sigma_{k-2}(W|ij)&=&\sigma_{k-2}(\lambda|ij)=\frac{\partial^{2}\sigma_{k}(\lambda)}{\partial\lambda_{i}\partial\lambda_{j}}
\end{eqnarray*}
\par
$\mathbf{Definition}$: For $1\leq k\leq n$
\begin{eqnarray*}
\Gamma_{k}=\{\lambda\in \mathbb{R}^{n}|\sigma_{j}(\lambda)>0,\forall j=1,\cdots,k\}
\end{eqnarray*}
A $n\times n$ symmetric matrix W belongs to $\Gamma_{k}$ if $\lambda(W)\in\Gamma_{k}$.

\medskip

\begin{lemma} \label{lemma2}
 Let $\lambda\in\Gamma_{k}$ and suppose $\lambda_{1}\geq\lambda_{2}\geq\cdots\geq\lambda_{n}$, then
 \begin{equation} \label{2.1}
\lambda_{1}\sigma_{k-1}(\lambda|1)\geq c(n,k)\sigma_{k}(\lambda)
\end{equation}
For $k\geq l\geq 1$
\begin{equation} \label{2.2}
(\frac{\sigma_{k}(\lambda)}{C_{n}^{k}})^{\frac{1}{k}}\leq(\frac{\sigma_{l}(\lambda)}{C_{n}^{l}})^{\frac{1}{l}}
\end{equation}
here $c(n,k)$ means a constant only depending on n,k.
\end{lemma}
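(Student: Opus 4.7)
Both inequalities are standard facts about elementary symmetric polynomials on the G\aa{}rding cone, so the plan is to give a short self-contained argument for each rather than invoke outside theory.

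For (\ref{2.1}), I would expand $\sigma_k$ in the index $1$ via the identity $\sigma_k(\lambda)=\lambda_1\sigma_{k-1}(\lambda|1)+\sigma_k(\lambda|1)$ and dominate the remainder. Applying Euler's homogeneous identity to the $(n-1)$-variable function $\sigma_k(\lambda|1)$ yields $k\sigma_k(\lambda|1)=\sum_{j=2}^{n}\lambda_j\,\sigma_{k-1}(\lambda|1j)$. Combining the ordering hypothesis $\lambda_j\leq\lambda_1$ with the nonnegativity of each $\sigma_{k-1}(\lambda|1j)$ (a consequence of the G\aa{}rding-cone restriction property $\lambda\in\Gamma_k\Rightarrow\lambda|1j\in\Gamma_{k-2}$) and the double-counting identity $\sum_{j=2}^{n}\sigma_{k-1}(\lambda|1j)=(n-k)\sigma_{k-1}(\lambda|1)$ (each monomial in $\sigma_{k-1}(\lambda|1)$ involves $k-1$ of the $n-1$ indices $\{2,\dots,n\}$, hence is counted by $n-k$ choices of $j$), I obtain $\sigma_k(\lambda|1)\leq\tfrac{n-k}{k}\,\lambda_1\sigma_{k-1}(\lambda|1)$. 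Plugging back gives $\sigma_k(\lambda)\leq\tfrac{n}{k}\lambda_1\sigma_{k-1}(\lambda|1)$, so (\ref{2.1}) holds with $c(n,k)=k/n$.

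For (\ref{2.2}) (the Maclaurin inequality), I would set $p_m:=\sigma_m(\lambda)/\binom{n}{m}$ and first establish Newton's inequality $p_m^{\,2}\geq p_{m-1}p_{m+1}$, which is valid for every real vector $\lambda$. The usual route is to consider the real-rooted polynomial $P(t)=\prod_{i=1}^{n}(t-\lambda_i)$; by Rolle's theorem its successive derivatives remain real-rooted, and applying this to $P^{(n-k-1)}$ reduces Newton's inequality for the triple $(\sigma_{k-1},\sigma_k,\sigma_{k+1})$ to the discriminant inequality for a degree-two polynomial whose roots are real. Since $\lambda\in\Gamma_k$ forces $p_0,p_1,\dots,p_k>0$, the ratios $r_m:=p_m/p_{m-1}$ are positive and, by Newton, nonincreasing for $1\leq m\leq k$. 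As $p_m=r_1r_2\cdots r_m$, the quantity $p_m^{1/m}$ is the geometric mean of $r_1,\dots,r_m$, and the geometric mean of the first $m$ terms of a nonincreasing positive sequence is itself nonincreasing in $m$. Hence $p_k^{1/k}\leq p_l^{1/l}$ whenever $k\geq l\geq 1$, which is exactly (\ref{2.2}).

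The main technical point rather than a genuine obstacle is the careful bookkeeping of G\aa{}rding-cone positivity in step (\ref{2.1}): one must make sure that every truncated symmetric function appearing in the estimate is nonnegative so that bounding $\lambda_j$ by $\lambda_1$ preserves the direction of the inequality. Aside from this, the arguments reduce to elementary combinatorial identities and one invocation of real-rootedness.
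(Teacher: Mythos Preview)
The paper states this lemma without proof in its preliminaries, so there is nothing to compare against; your treatment of (\ref{2.2}) via Newton's inequalities and monotonicity of geometric means is the standard route and is correct.

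Your argument for (\ref{2.1}) has a gap in the justification. You claim $\sigma_{k-1}(\lambda|1j)\ge 0$ for every $j\ge 2$, citing the restriction property $\lambda\in\Gamma_k\Rightarrow\lambda|1j\in\Gamma_{k-2}$. But $\Gamma_{k-2}$-membership only gives $\sigma_1(\lambda|1j),\dots,\sigma_{k-2}(\lambda|1j)>0$; it says nothing about $\sigma_{k-1}(\lambda|1j)$. For instance, with $n=4$, $k=2$, $\lambda=(10,10,-1,-1)$ one has $\sigma_1=18$, $\sigma_2=61$, so $\lambda\in\Gamma_2$, yet $\sigma_{1}(\lambda|12)=-2<0$. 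The conclusion you want is nevertheless true, and the repair is a one-line case split: if $\sigma_k(\lambda|1)\le 0$ then already $\sigma_k(\lambda)\le\lambda_1\sigma_{k-1}(\lambda|1)$ and (\ref{2.1}) holds with $c=1$; if $\sigma_k(\lambda|1)>0$ then, combined with $\lambda|1\in\Gamma_{k-1}$, one gets $\lambda|1\in\Gamma_k$ in $n-1$ variables, whence each $\sigma_{k-1}(\lambda|1j)=\partial\sigma_k(\lambda|1)/\partial\lambda_j$ is indeed positive (cf.\ Lemma~\ref{lemma3}), and your computation goes through verbatim to give $c(n,k)=k/n$.
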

\par
The following Lemma shows the uniformly elliptic of $\sigma_{k}$ operator on the condition that we have $C^{2}$ bound.
\par
\begin{lemma} \label{lemma3}
 Let $F=\sigma_{k}$, then the matrix $\{\frac{\partial F}{\partial W_{ij}}\}$ is positive definite for $W\in\Gamma_{k}$.
\par
Furthermore, if $||W||=\sqrt{\sum_{i,j}W_{ij}^{2}}\leq R$, then we have
\begin{eqnarray*}
\frac{\sigma_{k}(W)}{R(1+c_{n,k}\cdot\sigma_{k-1}^{\frac{1}{k-1}}(I))}I\leq \{\frac{\partial F}{\partial W_{ij}}\}
\leq R^{k-1}\sigma_{k-1}(I)I
\end{eqnarray*}
\end{lemma}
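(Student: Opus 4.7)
The plan is to reduce to the case where $W$ is diagonal using the orthogonal invariance of $\sigma_k$. If $W = V D V^{T}$ with $D = \mathrm{diag}(\lambda_1,\dots,\lambda_n)$ and $V \in O(n)$, then direct differentiation shows $\{\partial F/\partial W_{ij}\}(W) = V\{\partial F/\partial W_{ij}\}(D) V^{T}$, so both sides of the stated matrix inequalities (which are invariant under orthogonal conjugation of the identity matrix) reduce to their analogues for $D$. Lemma \ref{lemma1} then identifies $\{\partial F/\partial W_{ij}\}(D)$ with the diagonal matrix whose $(i,i)$-entry is $\sigma_{k-1}(\lambda|i)$, so everything reduces to estimating the numbers $\sigma_{k-1}(\lambda|i)$ from above and below.

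Positive definiteness and the upper bound are both straightforward. For positivity, the standard property of the cone $\Gamma_k$ asserts that $\lambda \in \Gamma_k$ implies $\lambda|i \in \Gamma_{k-1}$, hence $\sigma_{k-1}(\lambda|i) > 0$. For the upper bound, $\|W\| \leq R$ forces $|\lambda_j| \leq R$ for every $j$, and expanding $\sigma_{k-1}(\lambda|i)$ as a sum of $\binom{n-1}{k-1}$ monomials of degree $k-1$ yields
\[
\sigma_{k-1}(\lambda|i) \leq \binom{n-1}{k-1} R^{k-1} \leq R^{k-1}\sigma_{k-1}(I),
\]
giving the upper bound in matrix form.

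The main step is the lower bound. After ordering $\lambda_1 \geq \lambda_2 \geq \cdots \geq \lambda_n$, I would invoke inequality \eqref{2.1} from Lemma \ref{lemma2} to get $\lambda_1 \sigma_{k-1}(\lambda|1) \geq c(n,k)\sigma_k(\lambda)$, and combine this with $\lambda_1 \leq \|W\| \leq R$ to obtain the desired bound for the entry $\sigma_{k-1}(\lambda|1)$. To extend this to the other entries, I would use the identity
\[
\sigma_{k-1}(\lambda|i) - \sigma_{k-1}(\lambda|1) = (\lambda_1 - \lambda_i)\,\sigma_{k-2}(\lambda|1,i),
\]
obtained by splitting each side according to whether a monomial contains the other common missing variable. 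The right-hand side is nonnegative since $\lambda_1 \geq \lambda_i$ and $\lambda|1,i \in \Gamma_{k-2}$ implies $\sigma_{k-2}(\lambda|1,i) \geq 0$. Hence $\sigma_{k-1}(\lambda|1)$ is the minimum of $\{\sigma_{k-1}(\lambda|i)\}_i$, and the purely dimensional constant $c(n,k)^{-1}$ can be absorbed into the quantity $1 + c_{n,k}\sigma_{k-1}^{\frac{1}{k-1}}(I)$ appearing in the denominator.

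The main potential obstacle is that in the non-diagonal case the matrix $\{\partial F/\partial W_{ij}\}$ has nonzero off-diagonal entries and a head-on attack would require Lemma \ref{lemma1}'s description of the second derivatives; the orthogonal-invariance reduction avoids this cleanly. A smaller bookkeeping issue is matching the explicit dimensional constant in the lower bound: since $\sigma_{k-1}(I) = \binom{n}{k-1}$ is just a combinatorial number, the factor $1 + c_{n,k}\sigma_{k-1}^{\frac{1}{k-1}}(I)$ is really a placeholder for an $(n,k)$-dependent constant, and one only needs to fix $c_{n,k}$ large enough that the placeholder dominates $c(n,k)^{-1}$.
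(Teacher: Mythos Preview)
Your argument is correct. The paper does not actually prove this lemma; it merely states ``The proof is from Guan \cite{6}'' and moves on, so there is nothing to compare against beyond noting that your orthogonal-invariance reduction, combined with Lemma~\ref{lemma1} and inequality~\eqref{2.1}, is a clean and standard way to obtain the result.
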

\par
The proof is from Guan \cite{6}.

\medskip

\begin{lemma}\label{lemma4}
 If $W\in\Gamma_{k}$, then $\{\frac{\partial\sigma_{k}^{\frac{1}{k}}}{\partial W_{ij}}\}$ is positive definite and
$\sigma_{k}^{\frac{1}{k}}(W)$ is a concave function in $\Gamma_{k}$.
\end{lemma}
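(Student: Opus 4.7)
The lemma has two parts, and I would handle them separately.

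For the positive-definiteness claim, I would just apply the chain rule:
$$\frac{\partial \sigma_k^{1/k}}{\partial W_{ij}} = \frac{1}{k}\,\sigma_k^{(1-k)/k}\,\frac{\partial \sigma_k}{\partial W_{ij}}.$$
On $\Gamma_k$ we have $\sigma_k>0$, so the scalar $\sigma_k^{(1-k)/k}$ is strictly positive, and $\{\partial \sigma_k/\partial W_{ij}\}$ is positive definite by Lemma \ref{lemma3}. A positive definite matrix scaled by a positive number stays positive definite, which settles the first half of the statement.

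For concavity, I would first use the orthogonal invariance of $\sigma_k$ to reduce to checking the Hessian at a diagonal matrix $W=\mathrm{diag}(\lambda_1,\ldots,\lambda_n)$, where Lemma \ref{lemma1} supplies closed-form partial derivatives of $\sigma_k$. Differentiating $F=\sigma_k^{1/k}$ twice gives
$$\frac{\partial^2 F}{\partial W_{ij}\partial W_{rs}} = \frac{1}{k}\sigma_k^{1/k-1}\!\left(\frac{\partial^2 \sigma_k}{\partial W_{ij}\partial W_{rs}} - \frac{k-1}{k\,\sigma_k}\frac{\partial \sigma_k}{\partial W_{ij}}\frac{\partial \sigma_k}{\partial W_{rs}}\right).$$
Contracting with an arbitrary symmetric perturbation $\eta=(\eta_{ij})$ at the diagonal $W$ and invoking Lemma \ref{lemma1}, the pairs $(ij,ji)$ with $i\neq j$ contribute $-2\sum_{i<j}\sigma_{k-2}(\lambda|ij)\,\eta_{ij}^2\leq 0$, since $\sigma_{k-2}(\lambda|ij)\geq 0$ on $\Gamma_k$. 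These off-diagonal contributions only help concavity, so the problem collapses to the purely diagonal inequality
$$\sum_{i\neq j}\sigma_{k-2}(\lambda|ij)\,\eta_{ii}\eta_{jj}\;\leq\;\frac{k-1}{k\,\sigma_k(\lambda)}\left(\sum_i \sigma_{k-1}(\lambda|i)\,\eta_{ii}\right)^{\!2}$$
for all $\eta_{ii}\in\mathbb{R}$, i.e.\ to the concavity of $\sigma_k^{1/k}$ as a function of the eigenvalues on the cone $\Gamma_k\subset\mathbb{R}^n$.

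The main obstacle is establishing this last eigenvalue inequality, which is the Newton--Maclaurin/G{\aa}rding inequality for the hyperbolic polynomial $\sigma_k$. I would prove it by induction on $k$, using the two classical identities $\sum_i \lambda_i \sigma_{k-1}(\lambda|i) = k\sigma_k(\lambda)$ and $\sum_i \sigma_{k-1}(\lambda|i)=(n-k+1)\sigma_{k-1}(\lambda)$, together with Cauchy--Schwarz applied to weighted vectors built from $\sigma_{k-1}(\lambda|i)$, and with the bounds on ratios of elementary symmetric functions supplied by Lemma \ref{lemma2}. The base case $k=2$ is the quadratic Newton inequality and is immediate. Once the eigenvalue inequality is in hand, combining it with the favorable off-diagonal term yields that the Hessian of $\sigma_k^{1/k}$ is negative semidefinite at every diagonal $W\in\Gamma_k$, and hence at every $W\in\Gamma_k$ by orthogonal invariance, completing the proof.
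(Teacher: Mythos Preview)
The paper does not actually prove this lemma; it is listed among the preliminaries in Section~2 as a known fact about elementary symmetric functions, with no argument or citation attached. So there is nothing to compare your proof against.

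That said, your outline is the standard route and is sound. The first half is immediate from Lemma~\ref{lemma3} via the chain rule, exactly as you wrote. For the concavity half, your reduction to a diagonal $W$ via orthogonal invariance, the splitting into off-diagonal terms (nonpositive because $\sigma_{k-2}(\lambda|ij)\ge 0$ whenever $\lambda\in\Gamma_k$) and a purely diagonal quadratic form, and the identification of the remaining inequality with concavity of $\lambda\mapsto\sigma_k^{1/k}(\lambda)$ on the G{\aa}rding cone are all correct. The one place where your write-up is thin is the inductive proof of that eigenvalue inequality: the identities you list are the right ingredients, but the Cauchy--Schwarz step needs to be made precise (one convenient way is to prove the stronger log-concavity inequality $\sigma_k^{ii}\sigma_k^{jj}\ge\sigma_k\,\sigma_{k-2}(\lambda|ij)$ pairwise, which follows from Newton's inequalities applied to $(\lambda|ij)$ and is exactly the content of Lemma~\ref{lemma6} combined with Newton's inequality $\sigma_{l-1}^2\ge\sigma_l\sigma_{l-2}$). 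Alternatively, one can simply cite G{\aa}rding's theorem on hyperbolic polynomials, which is how this result is usually quoted in the curvature-equation literature.
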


\medskip

\begin{lemma} \label{lemma5}
 $\lambda=(\lambda_{1},\cdots,\lambda_{n})$ for $k>l\geq 0$.
We have $(\frac{\sigma_{k}(\lambda)}{\sigma_{l}(\lambda)})^{\frac{1}{k-l}}$ is a concave function in $\Gamma_{k}$. i.e.
\begin{eqnarray*}
\sum_{i,j}\frac{\partial^{2}(\frac{\sigma_{k}(\lambda)}{\sigma_{l}(\lambda)})^{\frac{1}{k-l}}}{\partial\lambda_{i}\partial\lambda_{j}}
\xi_{i}\xi_{j}\leq 0\qquad \forall \xi=(\xi_{1},\cdots,\xi_{n})\in \mathbb{R}^{n}
\end{eqnarray*}
This is equivalent as
\begin{eqnarray*}
-\sum_{i\neq j}\sigma_{k-2}(\lambda|ij)\xi_{i}\xi_{j}&\geq &(1+\frac{1}{k-l})\frac{\sigma_{k}}{\sigma_{l}^{2}}
(\sum_{i}\frac{\partial\sigma_{l}}{\partial\lambda_{i}}\xi_{i})^{2}-\frac{2}{k-l}\frac{1}{\sigma_{l}}\cdot(\sum_{j}\frac{\partial\sigma_{k}}{\partial\lambda_{j}}\xi_{j})\cdot
(\sum_{i}\frac{\partial\sigma_{l}}{\partial\lambda_{i}}\xi_{i})\\
& &-(1-\frac{1}{k-l})\frac{1}{\sigma_{k}}
(\sum_{j}\frac{\partial\sigma_{k}}{\partial\lambda_{j}}\xi_{j})^{2}
-\sigma_{k}\frac{\sum_{i\neq j}\sigma_{l-2}(\lambda|ij)\xi_{i}\xi_{j}}{\sigma_{l}}
\end{eqnarray*}
\end{lemma}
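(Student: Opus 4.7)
My plan splits into two parts: first establishing the concavity of $F := (\sigma_k/\sigma_l)^{1/(k-l)}$ on $\Gamma_k$, and then translating that concavity into the explicit pointwise inequality displayed in the lemma. For the concavity, when $l=0$ the claim reduces to Lemma \ref{lemma4}, and for general $l$ it is a classical result (due to Lin--Trudinger), typically proved by combining G\r{a}rding's theorem on hyperbolic polynomials with the Newton--Maclaurin inequalities of Lemma \ref{lemma2}. I would quote this concavity directly rather than re-derive it.

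Once concavity of $F$ is established, the displayed inequality follows by logarithmic differentiation. Writing $\sigma_{k,i}:=\partial \sigma_k/\partial\lambda_i$ and setting $H := \log F = \frac{1}{k-l}(\log\sigma_k - \log\sigma_l)$, the chain rule gives $F_{ij}=F(H_{ij}+H_iH_j)$, so since $F>0$ on $\Gamma_k$ the concavity of $F$ is equivalent to $\sum_{i,j}(H_{ij}+H_iH_j)\xi_i\xi_j\leq 0$ for every $\xi\in\mathbb{R}^n$. Direct computation yields
\[
H_i = \tfrac{1}{k-l}\Bigl(\tfrac{\sigma_{k,i}}{\sigma_k} - \tfrac{\sigma_{l,i}}{\sigma_l}\Bigr),\qquad H_{ij}=\tfrac{1}{k-l}\Bigl(\tfrac{\sigma_{k,ij}}{\sigma_k}-\tfrac{\sigma_{k,i}\sigma_{k,j}}{\sigma_k^2}-\tfrac{\sigma_{l,ij}}{\sigma_l}+\tfrac{\sigma_{l,i}\sigma_{l,j}}{\sigma_l^2}\Bigr).
\]
By Lemma \ref{lemma1}, $\sigma_{k,ii}=0$ and $\sigma_{k,ij}=\sigma_{k-2}(\lambda|ij)$ for $i\neq j$, so $\sum_{i,j}\sigma_{k,ij}\xi_i\xi_j=\sum_{i\neq j}\sigma_{k-2}(\lambda|ij)\xi_i\xi_j$, and analogously for $\sigma_l$. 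Multiplying the concavity inequality through by $(k-l)\sigma_k>0$ and rearranging --- moving the $\sigma_{k-2}(\lambda|ij)$ sum to the left and the $\sigma_{l-2}(\lambda|ij)$ sum together with the two squared linear forms in $\sum\sigma_{k,i}\xi_i$ and $\sum\sigma_{l,i}\xi_i$ to the right --- one recovers exactly the displayed inequality with coefficients $1+\tfrac{1}{k-l}$, $-\tfrac{2}{k-l}$, and $-(1-\tfrac{1}{k-l})$.

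The main obstacle is the concavity of $(\sigma_k/\sigma_l)^{1/(k-l)}$ itself: this is a genuinely global convexity statement whose standard proofs go through G\r{a}rding's theory of hyperbolic polynomials and are not merely computational. The equivalence derivation is routine algebra; the only subtlety is keeping track of the cross term $\sigma_{k,i}\sigma_{l,j}+\sigma_{l,i}\sigma_{k,j}$ arising from $H_iH_j$, which precisely produces the $-\tfrac{2}{k-l}$ coefficient in the final inequality.
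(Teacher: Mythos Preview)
Your proposal is correct, and in fact you supply more than the paper does: the paper states Lemma~\ref{lemma5} in the Preliminaries section without proof, treating both the concavity of $(\sigma_k/\sigma_l)^{1/(k-l)}$ and the displayed equivalent inequality as known facts to be invoked later in the $C^2$ estimate. Your approach---quoting the concavity as classical and then deriving the explicit inequality by logarithmic differentiation of $H=\tfrac{1}{k-l}(\log\sigma_k-\log\sigma_l)$---is exactly the standard way to unpack this statement, and the algebra you outline (with the cross terms from $H_iH_j$ producing the $-\tfrac{2}{k-l}$ coefficient, and multiplication by $(k-l)\sigma_k$ yielding the $1\pm\tfrac{1}{k-l}$ coefficients) checks out line by line.
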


\medskip

\begin{lemma}\label{lemma6}
 For $\forall i\neq j$,
 \begin{eqnarray*}
\sigma_{l}^{ii}\sigma_{l}^{jj}-\sigma_{l}\sigma_{l-2}(\lambda|ij)=\sigma_{l-1}(\lambda|ij)^{2}
-\sigma_{l}(\lambda|ij)\sigma_{l-2}(\lambda|ij)
\end{eqnarray*}
\begin{proof}
 \begin{eqnarray*}
 \sigma_{l}&=&\sigma_{l-1}(\lambda|i)\lambda_{i}+\sigma_{l}(\lambda|i)\\
&=&(\sigma_{l-1}(\lambda|ij)+\lambda_{j}\sigma_{l-2}(\lambda|ij))\lambda_{i}+\sigma_{l}(\lambda|ij)+\sigma_{l-1}(\lambda|ij)\lambda_{j}\\
&=&\sigma_{l-1}(\lambda|ij)(\lambda_{i}+\lambda_{j})+\lambda_{i}\lambda_{j}\sigma_{l-2}(\lambda|ij)+\sigma_{l}(\lambda|ij)
\end{eqnarray*}
Therefore
\begin{eqnarray*}
 \sigma_{l}^{ii}\sigma_{l}^{jj}-\sigma_{l}\sigma_{l-2}(\lambda|ij)&=&\sigma_{l-1}(\lambda|i)\sigma_{l-1}(\lambda|j)
-\sigma_{l}\sigma_{l-2}(\lambda|ij)\\
& =&(\sigma_{l-1}(\lambda|ij)+\lambda_{j}\sigma_{l-2}(\lambda|ij))(\sigma_{l-1}(\lambda|ij)+\lambda_{i}\sigma_{l-2}(\lambda|ij))\\
& & -(\sigma_{l-1}(\lambda|ij)(\lambda_{i}+\lambda_{j})+\lambda_{i}\lambda_{j}\sigma_{l-2}(\lambda|ij)+\sigma_{l}(\lambda|ij))
\sigma_{l-2}(\lambda|ij)\\
&=&\sigma_{l-1}(\lambda|ij)^{2}-\sigma_{l}(\lambda|ij)\sigma_{l-2}(\lambda|ij)
\end{eqnarray*}
\end{proof}
\end{lemma}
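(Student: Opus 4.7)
The plan is to prove this as a purely algebraic identity using the standard recursion for elementary symmetric functions, namely
\[
\sigma_{l}(\lambda|s) = \sigma_{l}(\lambda|st) + \lambda_{t}\sigma_{l-1}(\lambda|st),
\]
which follows at once from splitting the sum over $l$-element subsets according to whether they contain $t$ or not.

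First I would apply this recursion to $\sigma_{l-1}(\lambda|i) = \sigma_{l}^{ii}$, removing the index $j$, to obtain
\[
\sigma_{l}^{ii} = \sigma_{l-1}(\lambda|ij) + \lambda_{j}\sigma_{l-2}(\lambda|ij),
\]
and similarly $\sigma_{l}^{jj} = \sigma_{l-1}(\lambda|ij) + \lambda_{i}\sigma_{l-2}(\lambda|ij)$. Next I would apply the recursion twice to $\sigma_{l}(\lambda)$ itself (once to peel off $i$, then again to peel off $j$) to get
\[
\sigma_{l}(\lambda) = \sigma_{l}(\lambda|ij) + (\lambda_{i}+\lambda_{j})\sigma_{l-1}(\lambda|ij) + \lambda_{i}\lambda_{j}\sigma_{l-2}(\lambda|ij).
\]

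With these three expressions in hand, the proof reduces to a direct expansion: multiplying the two formulas for $\sigma_{l}^{ii}$ and $\sigma_{l}^{jj}$ yields
\[
\sigma_{l}^{ii}\sigma_{l}^{jj} = \sigma_{l-1}(\lambda|ij)^{2} + (\lambda_{i}+\lambda_{j})\sigma_{l-1}(\lambda|ij)\sigma_{l-2}(\lambda|ij) + \lambda_{i}\lambda_{j}\sigma_{l-2}(\lambda|ij)^{2},
\]
while $\sigma_{l}\cdot\sigma_{l-2}(\lambda|ij)$ has the same $(\lambda_{i}+\lambda_{j})$ and $\lambda_{i}\lambda_{j}$ terms by the formula above. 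Subtracting, those $\lambda_{i},\lambda_{j}$-dependent terms cancel exactly, leaving $\sigma_{l-1}(\lambda|ij)^{2} - \sigma_{l}(\lambda|ij)\sigma_{l-2}(\lambda|ij)$, which is the claim.

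There is no real obstacle here; the identity is a formal consequence of the binary recursion, and the only care needed is bookkeeping to confirm that every term involving $\lambda_{i}$ or $\lambda_{j}$ on the left cancels against the corresponding term on the right. I would write out the three expansions symmetrically in $i,j$ so the cancellation is transparent rather than grinding through the subtraction term by term.
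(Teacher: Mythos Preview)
Your proposal is correct and is essentially the same argument as the paper's: both use the recursion $\sigma_{m}(\lambda|s)=\sigma_{m}(\lambda|st)+\lambda_{t}\sigma_{m-1}(\lambda|st)$ to rewrite $\sigma_{l}^{ii}$, $\sigma_{l}^{jj}$, and $\sigma_{l}$ in terms of $\sigma_{l-2}(\lambda|ij)$, $\sigma_{l-1}(\lambda|ij)$, $\sigma_{l}(\lambda|ij)$, $\lambda_{i}$, $\lambda_{j}$, and then expand and cancel. The only cosmetic difference is that the paper writes out the product and subtraction in one block rather than first displaying $\sigma_{l}^{ii}\sigma_{l}^{jj}$ separately.
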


\section{$C^{0}$ and $C^{1}$ estimate}

Let $(M,g)$ be a hypersurface in $\mathbb{H}^{n+1}$ (space form with constant sectional curvature -1) with induced metric g.
\par
M is a bounded star-shaped domain. We can parametrize M over $S^{n}$ by positive function $\rho$. Due to this parametrization,
the prescribed curvature measure problem for this class of domains can be reduced to a curvature type nonlinear partial differential equation
of $\rho$ on $S^{n}$.
\par
We now give the following geometric condition on M.
\par
$\mathbf{Definition}$ $\quad$ We say a smooth hypersurface $M\in \mathbb{H}^{n+1}$ is k-convex for some $0\leq k\leq n$ if its principal curvature vector
$\kappa(x)\in \Gamma_{k}, \quad \forall x\in M$
$~\\$
where $\Gamma_{k}$ is the Garding cone defined by
$\Gamma_{k}=\{\lambda\in \mathbb{R}^{n}|\sigma_{j}(\lambda)>0,\quad \forall j=1,\cdots k\}$
\par
Since M is star-shaped, it can be parametrized by a graph $\Sigma=\{ (\rho(\theta),\theta)|\theta\in S^{n}\}$. Denote
\begin{eqnarray} \label{a.1}
 R_{M} & & S^{n}\rightarrow M   \\
       & & \theta\rightarrow (\rho(\theta),\theta) \notag
\end{eqnarray}

From Veronelli \cite{16}, Kohlmann \cite{12}, the (n-k)-th curvature measure of $\mathbb{H}^{n+1}$ space on each Borel set $\beta$ in $S^{n}$ can be defined as
\begin{equation} \label{a.2}
C_{n-k}(M,\beta)=\int_{R_{M}(\beta)}\sigma_{k}(\kappa)d\mu_{g}
\end{equation}
\par
Then the problem of prescribing (n-k)-th curvature measure is:

\medskip

Given a positive function $f\in C^{2}(S^{n})$, find a closed hypersurface M which can be parametrized like (\ref{a.1}), such that
\begin{equation} \label{a.3}
C_{n-k}(M,\beta)=\int_{\beta}fd\mu_{S^{n}}
\end{equation}
 for every Borel set $\beta$ in $S^{n}$.
\par
Since M's induce metric is g, therefore, density function is $\sqrt{det(g)}$. We have
\begin{equation} \label{a.4}
C_{n-k}(M,\beta)=\int_{R_{M}(\beta)}\sigma_{k}(\kappa)d\mu_{g}=\int_{\beta}\sigma_{k}\cdot\sqrt{det(g)}d\mu_{S^{n}}
\end{equation}
\par
Let $\{e_{1},\cdots, e_{n}\}$ be a local orthonormal frame on $S^{n}$. Denote $e_{ij}$ the standard spherical metric with respect
to the frame. All the covariant derivatives with respect to the standard spherical metric $e_{ij}$ will also be denoted as $\nabla$ when there is no confusion in the context.
\par

Under the Gaussian geodesic normal coordinates, the metric of $\mathbb{H}^{n+1}$ can be expressed as
\begin{eqnarray*}
ds^{2}=d\rho^{2}+\phi(\rho)^{2}dz^{2}
\end{eqnarray*}
Here $\phi(\rho)=\sinh(\rho),\quad \rho\in [0,\infty)$, and $dz^{2}$ is the induced standard metric on $S^{n}$ in Euclidean space.
We define
\begin{eqnarray*}
\Phi(\rho)=\int_{0}^{\rho}\phi(s)ds
\end{eqnarray*}
Consider the vector field $V=\phi(\rho)\frac{\partial}{\partial\rho}$, and let $\nu$ be the outward normal vector field of M.
Then the generalized support function of M is defined as $u=\langle V,\nu\rangle$.

\par
Because M is a star-shaped hypersurface, the support function, induced metric, inverse metric matrix, second fundamental form can be expressed as follows:
\begin{eqnarray*}
& &u=\frac{\phi^{2}}{\sqrt{\phi^{2}+|\nabla \rho|^{2}}}\\
& & g_{ij}=\phi^{2}\delta_{ij}+\rho_{i}\rho_{j},\quad
g^{ij}=\frac{1}{\phi^{2}}(e^{ij}-\frac{\rho_{i}\rho_{j}}{\phi^{2}+|\nabla \rho|^{2}})\\
& &h_{ij}=\frac{1}{\sqrt{\phi^{2}+|\nabla \rho|^{2}}}(-\phi\rho_{ij}+2\phi^{'}\rho_{i}\rho_{j}+\phi^{2}\phi^{'}\delta_{ij})\\
& &h_{j}^{i}=\frac{1}{\phi^{2}\sqrt{\phi^{2}+|\nabla \rho|^{2}}}(e^{ik}-\frac{\rho_{i}\rho_{k}}{\phi^{2}+|\nabla \rho|^{2}})(-\phi\rho_{jk}+2\phi^{'}\rho_{k}\rho_{j}+\phi^{2}\phi^{'}\delta_{kj})\\
& &\tilde{h_{j}^{i}}=\frac{1}{\phi^{2}\sqrt{\phi^{2}+|\nabla \rho|^{2}}}(\delta^{ik}-\frac{\rho_{i}\rho_{k}}{\tilde{\omega}(\tilde{\omega}+\phi)})(-\phi\rho_{kl}+2\phi^{'}\rho_{k}\rho_{l}+\phi^{2}\phi^{'}\delta_{kl})(\delta^{lj}-
\frac{\rho_{l}\rho_{j}}{\tilde{\omega}(\tilde{\omega}+\phi)})
\end{eqnarray*}

\par
Here $\phi$ means $\phi(\rho)$, $\rho_{i}$ is the derivative with respect to spherical metric $e_{ij}$ and $\tilde{\omega}=\sqrt{\phi^{2}+|\nabla \rho|^{2}}$.
\par

Then the principal curvatures $(\kappa_{1},\cdots,\kappa_{n})$ of M are the eigenvalues of symmetric matrix $H=(\tilde{h_{j}^{i}})$ and
\begin{equation} \label{a.5}
\sqrt{det(g_{ij})}=\phi^{n-1}\cdot \sqrt{\phi^{2}+|\nabla \rho|^{2}}
\end{equation}
\par
Therefore, from (\ref{a.3}) and (\ref{a.4}),  the prescribed curvature measure problem can be reduced to the following curvature measure equation on $S^{n}$
\begin{equation} \label{a.6}
\sigma_{k}(\kappa_{1},\cdots,\kappa_{n})=\sigma_{k}(\tilde{h_{j}^{i}})=\frac{f}{\phi^{n-1}\sqrt{\phi^{2}+|\nabla \rho|^{2}}}
\end{equation}
\par
Here $f>0$ is the given function defined on $S^{n}$. We say a solution of (\ref{a.6}) is admissible if $\kappa(X)=(\kappa_{1},\cdots,\kappa_{n})
\in \Gamma_{k},\qquad \forall X\in M$.
\par
Actually, any positive $C^{2}$ function $\rho$ on $S^{n}$ satisfying equation (\ref{a.6}) is an admissible solution. This is because at the point where $\rho$ obtains its maximum, we have $\nabla\rho=0$, then
\begin{eqnarray*}
\tilde{h_{j}^{i}}=\frac{1}{\phi^{3}}(-\phi\rho_{ij}+\phi^{2}\phi^{'}\delta_{ij})
\end{eqnarray*}
\par
Since matrix $\{\rho_{ij}\}$ is semi-negative definite at this point, then all principal curvatures are positive, which means the solution is admissible
at this point.
\par
Because $\Gamma_{k}, S^{n}$ are connected, $\kappa(X)$ is continuous $(X\in M)$ and the fact that $\sigma_{k}(\lambda)=0$ on $\partial\Gamma_{k}$, we obtain this solution is admissible at any point of M.
\par
Next, we will prove the main theorem.
\par
\begin{theorem} \label{T1}
 Let $n\geq 2$, $1\leq k\leq n-1$. Suppose $f\in C^{2}(S^{n})$ and $f>0$. Then
there exists a unique k-convex star-shaped hypersurface $M\in C^{3,\alpha}$, such that it satisfies (\ref{a.6}).
\par
Moreover, there is a constant C only depending on k,n,$||f||_{C^{2}}$, $inf(f)$, and $\alpha$, such that
\begin{eqnarray*}
||\rho||_{C^{3,\alpha}}\leq C
\end{eqnarray*}
\end{theorem}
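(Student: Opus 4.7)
The plan is to solve the fully nonlinear equation (\ref{a.6}) by the continuity method combined with degree theory, with the bulk of the work being a priori $C^{3,\alpha}$ estimates.

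\textbf{$C^0$ estimate.} At a point where $\rho$ achieves its maximum, $\nabla\rho=0$ and $\nabla^2\rho \leq 0$, so from the explicit form of $\tilde h^i_j$ displayed before (\ref{a.5}) we get $\kappa_i \geq \coth\rho_{\max}$ for every $i$, hence $\sigma_k(\kappa) \geq \binom{n}{k}\coth^k\rho_{\max}$. Comparing with (\ref{a.6}) at that point gives $\binom{n}{k}\cosh^k\rho_{\max}\sinh^{n-k}\rho_{\max}\leq \sup_{S^n} f$, which bounds $\rho_{\max}$ from above since $k<n$. The mirror argument at a minimum of $\rho$, combined with $\inf f>0$, yields a positive lower bound on $\rho_{\min}$.

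\textbf{$C^1$ estimate.} Apply the maximum principle to an auxiliary function of the form $|\nabla \Phi(\rho)|^2/2$, where $\Phi'=\phi$. At its maximum, differentiating (\ref{a.6}) once and using Lemma \ref{lemma3} to control the linearized operator together with the $C^0$ bound just obtained produces an inequality of the form $c|\nabla\rho|^2 \leq C$, giving $|\nabla\rho|\leq C$.

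\textbf{$C^2$ estimate.} This is the heart of the argument and, as the introduction stresses, the main obstacle. The aim is to bound $\lambda_1(\tilde h)$ from above; combined with $\kappa \in \Gamma_k$ this gives a full two-sided bound on the principal curvatures and therefore on $\nabla^2\rho$. I would test on a function of the form
\[
W \;=\; \log \widetilde{\lambda_1} + a\,\Phi(\rho) + b\, u
\]
with $a,b$ to be chosen, where $\widetilde{\lambda_1}$ is a smooth symmetric approximation of the largest eigenvalue in the spirit of Brendle-Choi-Daskalopoulos and Spruck-Xiao; the key virtue of this approximation is that its second derivative in the $h_{ij}$ variables contributes extra non-negative third-order terms that are indispensable here. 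Evaluating at an interior maximum, commuting covariant derivatives produces the curvature term $-h_{11}$ coming from $\operatorname{sec}=-1$ of $\mathbb{H}^{n+1}$, which is precisely the problematic quantity the introduction flags. I would then split into cases according to whether $\lambda_1$ is much larger than $\lambda_n$ or comparable to the other eigenvalues, and in each case combine (i) the concavity of $\sigma_k^{1/k}$ from Lemma \ref{lemma4}, (ii) the sharp second-derivative inequality of Lemma \ref{lemma5} applied with a carefully chosen $l$, and (iii) the symmetric-function identity of Lemma \ref{lemma6}, to absorb the bad $-h_{11}$ term into the good third-order terms provided by the smooth-eigenvalue approximation and the auxiliary linear term $a\,\Phi(\rho) + b\,u$. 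Choosing $a,b$ large, depending only on $n,k,\|f\|_{C^2},\inf f$ and the previous estimates, forces $\lambda_1\leq C$.

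\textbf{Higher regularity, existence and uniqueness.} With $\|\rho\|_{C^2}\leq C$ in hand, equation (\ref{a.6}) is uniformly elliptic by Lemma \ref{lemma3} and concave after composition with $(\cdot)^{1/k}$ by Lemma \ref{lemma4}; the Evans-Krylov theorem and Schauder theory then upgrade to $\|\rho\|_{C^{3,\alpha}}\leq C$. For existence, deform $f$ to a positive constant for which a geodesic sphere is an explicit solution and run a continuity or Leray-Schauder degree argument: openness follows from the invertibility of the linearized operator at each $t$ modulo a routine Fredholm analysis, and closedness is immediate from the uniform $C^{3,\alpha}$ bound just established. Finally, uniqueness is obtained by comparing two solutions $\rho_1,\rho_2$ at a maximum of $\rho_1-\rho_2$: at that point $\nabla(\rho_1-\rho_2)=0$ and $\nabla^2(\rho_1-\rho_2)\leq 0$, so monotonicity of $1/\phi(\rho)^{n-1}$ in $\rho$ combined with ellipticity forces $\rho_1\leq\rho_2$, and symmetry concludes. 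The genuinely hard step is the $C^2$ estimate described above; everything else is structural or follows standard templates once that bound is available.
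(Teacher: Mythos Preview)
Your architecture matches the paper's: $C^0$, $C^1$ by maximum principle, the hard $C^2$ estimate via a test function built from a smooth approximation of $\lambda_1$ (the Brendle--Choi--Daskalopoulos device) together with the concavity of $(\sigma_k/\sigma_l)^{1/(k-l)}$ and the identity of Lemma~\ref{lemma6}, then Evans--Krylov and continuity. Two points in the $C^2$ step need correction.

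The test function $W=\log\tilde\lambda_1+a\Phi+bu$ with $a,b>0$ large will not close: the contribution $b\,\sigma_k^{ii}u_{ii}$ contains $-bu\,\sigma_k^{ii}h_{ii}^2$ (from $u_{ii}=\phi'h_{ii}-uh_{ii}^2+\cdots$), which has the wrong sign and only gets worse as $b$ grows. The paper takes instead the quotient $\lambda_1 e^{\beta\Phi}/(u-a)$ with $a=\tfrac1N\inf_M u$ small; in log form this is $\log\lambda_1+\beta\Phi-\log(u-a)$. The $-\log(u-a)$ piece leaves, after commutation with the $-\sigma_k^{ii}h_{ii}^2$ coming from (\ref{3.2}), the strictly positive residue $\frac{a}{u-a}\sigma_k^{ii}h_{ii}^2$ together with the square $\sigma_k^{ii}\bigl(u_i/(u-a)\bigr)^2$, and both are used (see $(\clubsuit 1)$ and (\ref{3.24})) to absorb the bad third-order quantities $\sigma_k^{ii}(h_{11i}/\lambda_1)^2$ through the critical equation. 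Second, the case splitting is finer than ``$\lambda_1$ large relative to $\lambda_n$ or not'': after disposing of $|\lambda_n|\ge\varepsilon\lambda_1$, the paper runs a cascade on the intermediate eigenvalues, Case $l{+}1$ being $\lambda_2,\dots,\lambda_l\gtrsim\lambda_1$ while $\lambda_{l+1}\le\lambda_1/C_l$, and it is precisely in that case that Lemma~\ref{lemma5} is applied with that specific $l$; the thresholds $C_1,\dots,C_{k-2}$ are fixed inductively (Facts~1--4 and Lemma~7 control $\sigma_l$, $\sigma_l^{ii}$, $\sigma_{l-2}(\lambda|ij)$ at each step), and the terminal Case $k$ is closed by $\sigma_k^{nn}\gtrsim\lambda_1^{k-1}$ (Lemma~8). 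Your phrase ``Lemma~\ref{lemma5} with a carefully chosen $l$'' is right in spirit, but the choice of $l$ is forced by which case one is in, and setting up this cascade is the substance of the argument. (A minor point: for $C^1$ the paper tests $\tfrac12|\nabla\gamma|^2$ with $d\gamma/d\rho=1/\phi$, not $|\nabla\Phi|^2$; the former makes $b^i_j$ and the decisive $(n-k)\phi'$ factor emerge cleanly.)
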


\medskip

The $C^{0}$ and $C^{1}$ estimates are the same methods with Guan-Li-Li\cite{7}. Even though, we still give the proof here for completeness.
\par
We first prove $C^{0}$ estimate.
\par
\begin{theorem} \label{T2}
 let $n\geq 2$, $1\leq k\leq n-1$. Suppose $\rho$ is a solution of (\ref{a.6}), then
 \begin{eqnarray*}
c_{0}\leq \min(\rho)\leq \max(\rho)\leq c_{1}
\end{eqnarray*}
$c_{0}$, $c_{1}\sim \inf(f)$, $|f|_{C^{0}}$, $n$, $k$.
\end{theorem}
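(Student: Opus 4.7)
The plan is to derive both bounds by evaluating equation~(\ref{a.6}) at the extrema of $\rho$ on $S^n$. The decisive simplification is that at a critical point $\nabla\rho=0$, so $\tilde\omega=\phi(\rho)$ and the expression for $\tilde h^i_j$ reduces drastically. Diagonalizing the spherical Hessian $(\rho_{ij})$ at such a point, the formula for $\tilde h^i_j$ given above collapses to
$$\tilde h^i_j\big|_{\nabla\rho=0} = \frac{\phi'(\rho)}{\phi(\rho)}\delta_{ij} - \frac{\rho_{ij}}{\phi(\rho)^2},$$
so the principal curvatures are $\kappa_i = \coth(\rho) - \rho_{ii}/\sinh^2(\rho)$, while the right-hand side of~(\ref{a.6}) reduces to $f/\sinh^n(\rho)$.

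At a maximum point $\theta_M$ of $\rho$, one has $(\rho_{ij})\leq 0$, so each $\kappa_i\geq \coth(\rho_M)>0$ and hence $\sigma_k(\kappa)|_{\theta_M}\geq \binom{n}{k}\coth^k(\rho_M)$. Substituting into the equation gives
$$\binom{n}{k}\cosh^k(\rho_M)\sinh^{n-k}(\rho_M) \leq \sup_{S^n} f,$$
and since the left-hand side is strictly increasing in $\rho_M$ and tends to $+\infty$, this forces an upper bound $\rho_M \leq c_1$ depending only on $n$, $k$ and $|f|_{C^0}$. For the lower bound, at a minimum point $\theta_m$ one has $(\rho_{ij})\geq 0$, so $\kappa_i \leq \coth(\rho_m)$; combining with Maclaurin's inequality (Lemma~\ref{lemma2}, formula~(\ref{2.2}) with $l=1$) one deduces $\sigma_k(\kappa)|_{\theta_m} \leq \binom{n}{k}\coth^k(\rho_m)$, which upon insertion into~(\ref{a.6}) yields
$$\inf_{S^n} f \leq \binom{n}{k}\cosh^k(\rho_m)\sinh^{n-k}(\rho_m).$$

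The hypothesis $k\leq n-1$ enters in a decisive way at this last step: since $n-k\geq 1$, the right-hand side vanishes as $\rho_m\to 0^+$, forcing $\rho_m \geq c_0 > 0$ with $c_0$ depending only on $n$, $k$, and $\inf f$. When $k=n$ the factor $\sinh^{n-k}$ degenerates to $1$ and this route collapses, which is precisely why the Alexandrov case requires the extra hypothesis $\inf f > 1$ flagged in the introduction. There is no serious technical obstacle here: the whole estimate is essentially algebraic once one is at a critical point, and the compatibility of diagonalizing $\tilde h^i_j$ with that of $(\rho_{ij})$ is automatic because the inverse metric becomes isotropic where $\nabla\rho=0$.
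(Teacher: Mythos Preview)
Your proof is correct and follows the same overall strategy as the paper: evaluate equation~(\ref{a.6}) at the extrema of $\rho$, where $\nabla\rho=0$ reduces $\tilde h^i_j$ to $(\phi'/\phi)\delta_{ij}-\rho_{ij}/\phi^2$, and then compare $\sigma_k(\kappa)$ to $\binom{n}{k}\coth^k(\rho)$ to extract the bounds $f\gtrless \binom{n}{k}(\phi')^k\phi^{n-k}$.

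The one technical difference is in how that comparison is justified. The paper uses a single unified device: writing $F(t)=\sigma_k\bigl(-t\rho_{ij}/\phi^2+(\phi'/\phi)\delta_{ij}\bigr)$ and computing $F(1)-F(0)=\int_0^1 F'(s)\,ds$, then invoking convexity of $\Gamma_k$ to ensure $\{\sigma_k^{ij}\}$ is positive definite along the segment, which makes the sign of the integral match the sign of $-\rho_{ij}$ at each extremum. You instead argue directly at the maximum (all $\kappa_i\geq\coth(\rho_M)>0$ gives $\sigma_k\geq\binom{n}{k}\coth^k(\rho_M)$ by elementary monotonicity) and invoke Maclaurin's inequality~(\ref{2.2}) at the minimum. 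Your route is slightly more elementary and avoids the integration trick; the paper's route has the virtue of handling both extrema with the same mechanism and not needing Maclaurin. Either way, the conclusion and the role of the hypothesis $k\leq n-1$ are identical.
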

\begin{proof} At the point where $\rho$ obtains its maximum, we have $\nabla\rho=0$, then
\begin{eqnarray*}
\tilde{h_{j}^{i}}=-\frac{\rho_{ij}}{\phi^{2}}+\frac{\phi^{'}}{\phi}\delta_{ij}
\end{eqnarray*}
\par
Let $F(t)=\sigma_{k}(-t\cdot\frac{\rho_{ij}}{\phi^{2}}+\frac{\phi^{'}}{\phi}\delta_{ij})$, then
\begin{eqnarray*}
\sigma_{k}(-\frac{\rho_{ij}}{\phi^{2}}+\frac{\phi^{'}}{\phi}\delta_{ij})-\sigma_{k}(\frac{\phi^{'}}{\phi}\delta_{ij})
&=&F(1)-F(0)=\int_{0}^{1}F^{'}(s)ds\\
&=&\sum_{i,j}(\int_{0}^{1}\sigma_{k}^{ij}(-s\cdot\frac{\rho_{st}}{\phi^{2}}+\frac{\phi^{'}}{\phi}\delta_{st})ds)\cdot\frac{-\rho_{ij}}{\phi^{2}}
\end{eqnarray*}
\par
Since $\rho$ is an admissible solution, we have $\{-\frac{\rho_{ij}}{\phi^{2}}+\frac{\phi^{'}}{\phi}\delta_{ij}\}\in\Gamma_{k}$.
Because $\{\frac{\phi^{'}}{\phi}\delta_{ij}\}\in\Gamma_{k}$ and the fact that $\Gamma_{k}$ is convex, we have $
\{-s\cdot\frac{\rho_{ij}}{\phi^{2}}+\frac{\phi^{'}}{\phi}\delta_{ij}\}\in\Gamma_{k},\quad \forall s\in [0,1]$.
\par
This implies $\{\sigma_{k}^{ij}(-s\cdot\frac{\rho_{st}}{\phi^{2}}+\frac{\phi^{'}}{\phi}\delta_{st})\}$ is positive definite. $ \forall s\in [0,1]$.
Hence, at the maximum point,
\begin{eqnarray*}
\sigma_{k}(-\frac{\rho_{ij}}{\phi^{2}}+\frac{\phi^{'}}{\phi}\delta_{ij})\geq\sigma_{k}(\frac{\phi^{'}}{\phi}\delta_{ij})&=& C_{n}^{k}
\cdot\frac{(\phi^{'})^{k}}{\phi^{k}}\\
f=\phi^{n}\cdot\sigma_{k}(-\frac{\rho_{ij}}{\phi^{2}}+\frac{\phi^{'}}{\phi}\delta_{ij})&\geq & C_{n}^{k}
\cdot(\phi^{'})^{k}\cdot\phi^{n-k}\geq C_{n}^{k}
\cdot \phi^{n}
\end{eqnarray*}
\par
We have the upper estimate of $\rho$.
\par
Similarly, at the minimal point of $\rho$, we have
\begin{eqnarray*}
f\leq C_{n}^{k}\cdot(\phi^{'})^{k}\cdot\phi^{n-k}
\end{eqnarray*}
Since $k<n$, we obtain the positive lower bound for $\rho$.
\end{proof}

Before the proof of $C^{1}$ estimate, we first introduce a new variable $\gamma$, satisfying
\begin{eqnarray*}
\frac{d\gamma}{d\rho}=\frac{1}{\phi}
\end{eqnarray*}
Define $\omega=\sqrt{1+|\nabla \gamma|^{2}}$, then we have
\begin{eqnarray*}
& &g_{ij}=\phi^{2}(\delta_{ij}+\gamma_{i}\gamma_{j}), g^{ij}=\frac{1}{\phi^{2}}(\delta^{ij}-\frac{\gamma_{i}\gamma_{j}}{\omega^{2}})\\
& &h_{ij}=\frac{\phi}{\omega}(-\gamma_{ij}+\phi^{'}\gamma_{i}\gamma_{j}+\phi^{'}\delta_{ij})\\
& &\tilde{h_{j}^{i}}=\frac{1}{\phi\omega}(\delta^{ik}-\frac{\gamma_{i}\gamma_{k}}{\omega(\omega+1)})
(-\gamma_{kl}+\phi^{'}\gamma_{k}\gamma_{l}+\phi^{'}\delta_{kl})(\delta^{lj}-\frac{\gamma_{l}\gamma_{j}}{\omega(\omega+1)})
\end{eqnarray*}
Then equation (\ref{a.6}) becomes
\begin{equation} \label{a.7}
\frac{\phi^{n-k}}{\omega^{k-1}}\sigma_{k}(b_{j}^{i})=f
\end{equation}
Here $\phi$ means $\phi(\rho)$ and
\begin{eqnarray*}
b_{j}^{i}=(\delta^{ik}-\frac{\gamma_{i}\gamma_{k}}{\omega(\omega+1)})
(-\gamma_{kl}+\phi^{'}\gamma_{k}\gamma_{l}+\phi^{'}\delta_{kl})(\delta^{lj}-\frac{\gamma_{l}\gamma_{j}}{\omega(\omega+1)})
\end{eqnarray*}
\par
Apparently, $(\lambda_{1}(b_{j}^{i}),\cdots,\lambda_{n}(b_{j}^{i}))\in\Gamma_{k}$, here $\lambda_{i}$ means the eigenvalue of matrix $\{b_{j}^{i}\}$.
\par
We now prove $C^{1}$ estimate.
\par
\begin{theorem} \label{T3}
 let $n\geq 2$, $1\leq k\leq n-1$. Suppose $\rho$ is a solution of (\ref{a.6}). Then
 \begin{eqnarray*}
\max |\nabla\rho|\leq C_{2}
\end{eqnarray*}
$C_{2}\sim \inf(f)$, $||f||_{C^{1}}$, $n$, $k$.
\end{theorem}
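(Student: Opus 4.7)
The plan is to work in the variable $\gamma$ from Section~3; since Theorem~\ref{T2} supplies positive two-sided bounds on $\phi(\rho)$ and $\rho_i = \phi(\rho)\gamma_i$, bounding $|\nabla\rho|$ is equivalent to bounding $|\nabla\gamma|$ on $S^n$. By compactness of the sphere, it suffices to estimate $|\nabla\gamma|$ at the maximum point $\theta_0$ of the test function $\Phi(\theta) = \tfrac{1}{2}|\nabla\gamma(\theta)|^2$. I choose a local orthonormal frame on $S^n$ near $\theta_0$ such that $e_1$ aligns with $\nabla\gamma(\theta_0)$, so that $\gamma_1(\theta_0) = |\nabla\gamma(\theta_0)|$ and $\gamma_j(\theta_0) = 0$ for $j \ge 2$.

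The first-order condition $\Phi_i(\theta_0) = \gamma_k\gamma_{ki} = 0$ then forces $\gamma_{1i}(\theta_0) = 0$ for all $i$ (assuming $|\nabla\gamma(\theta_0)| \neq 0$). Substituting this into the explicit formula for $b^i_j$ above (\ref{a.7}) shows that $b$ is block diagonal at $\theta_0$: the $(1,1)$ entry is $\phi'(\rho)$, the mixed $(1,j)$ entries vanish, and the lower right $(n-1)\times(n-1)$ block equals $(-\gamma_{ij}+\phi'(\rho)\delta_{ij})_{i,j>1}$. Contracting the second-order condition $\Phi_{ij}(\theta_0) \le 0$ with the positive-definite matrix $\{\sigma_k^{ij}\}$ coming from (\ref{a.7}) and dropping the nonnegative term $\sigma_k^{ij}\gamma_{ki}\gamma_{kj}$ yields
\begin{equation*}
\sigma_k^{ij}\,\gamma_1\,\gamma_{1ij}(\theta_0) \le 0.
\end{equation*}
Differentiating (\ref{a.7}) once along $e_1$ expresses $\sigma_k^{ij}\nabla_1 b_{ij}$ in terms of $f_1$, $\phi'(\rho)\gamma_1$, and factors controllable by the $C^0$ data. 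Using the Ricci identity on $S^n$ to exchange $\gamma_{ij1}$ with $\gamma_{1ij}$ introduces only correction terms linear in $\gamma_l$ and $\delta_{ij}$, together with contributions from differentiating the projection factor $\delta^{ik}-\gamma_i\gamma_k/[\omega(\omega+1)]$; all are uniformly bounded by Theorem~\ref{T2}. Combining the two sides produces an inequality in which the leading term grows like $c\,|\nabla\gamma(\theta_0)|^{2}$ while the opposing side is controlled by a constant depending only on $n$, $k$, $\inf f$, $\|f\|_{C^1}$, and the $C^0$ bound, yielding $|\nabla\gamma(\theta_0)| \le C_2$.

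The main obstacle is careful bookkeeping rather than algebra. Compared with the Euclidean case of Guan--Li--Li~\cite{7}, the hyperbolic setting introduces the nonconstant radial factor $\phi(\rho)$, a nontrivial projection factor in the definition of $b^i_j$, and curvature contributions from $S^n$ entering through the Ricci identity. Because Theorem~\ref{T2} already supplies uniform bounds on $\phi(\rho)$ and its derivatives, each of these corrections is a lower-order perturbation of the Euclidean computation; the delicate point is verifying that after all hyperbolic corrections are assembled, the leading $|\nabla\gamma|^{2}$ term retains the correct sign, so that the Guan--Li--Li strategy goes through in $\mathbb{H}^{n+1}$ with only routine modifications.
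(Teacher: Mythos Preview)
Your outline follows the paper's strategy---maximum of $\tfrac12|\nabla\gamma|^2$, contract the Hessian inequality against the linearized operator, differentiate (\ref{a.7}) once, commute derivatives via the Ricci identity on $S^n$---but two of your assertions are inaccurate and obscure where the estimate actually closes.

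First, the tensor to contract with is not $\sigma_k^{ij}=\partial\sigma_k/\partial b^i_j$ but $\widetilde{F^{st}}=\sum_{\alpha,\beta}\sigma_k^{\alpha\beta}G^{\alpha s}G^{t\beta}$, where $G^{ik}=\delta^{ik}-\gamma_i\gamma_k/[\omega(\omega+1)]$. Differentiating $\sigma_k(b)$ along $e_1$ produces $-\widetilde{F^{kl}}\gamma_{kl1}$ as the third-order piece, not $-\sigma_k^{kl}\gamma_{kl1}$; only with $\widetilde{F}$ do the third-order terms from the maximum condition match those from the differentiated equation. In your frame these differ by a factor $1/\omega^2$ on the $(1,1)$ entry, which is not a bounded error once $|\nabla\gamma|$ is large.

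Second, the Ricci-identity corrections are \emph{not} uniformly bounded by the $C^0$ data: commuting $\gamma_{1ij}$ with $\gamma_{ij1}$ produces $\bigl(\sum_i\widetilde{F^{ii}}-\widetilde{F^{11}}\bigr)|\nabla\gamma|^2$, a genuinely quadratic term. It has the favourable sign and may be discarded, but it is not lower order. The term that actually closes the estimate is $(n-k)\,\phi'\,|\nabla\gamma|^2\,\sigma_k$, coming from differentiating the factor $\phi^{n-k}$ in (\ref{a.7}); the coefficient $(n-k)$ is exactly where the hypothesis $k\le n-1$ enters, and without isolating it you cannot justify the sign you call ``delicate.'' Once you contract with $\widetilde{F}$, drop the nonnegative Ricci and $\phi''\phi$ contributions, and retain only this term against $\frac{\omega^{k-1}}{\phi^{n-k}}\gamma_1 f_1$, the bound $|\nabla\gamma|\le |\nabla f|\big/[(n-k)\phi'\inf f]$ follows at once, exactly as in the paper.
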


\begin{proof}  We only need to prove $|\nabla\gamma|$ is bounded.
\par
Define $G_{\alpha s}=\delta^{\alpha s}-\frac{\gamma_{\alpha}\gamma_{s}}{\omega(\omega+1)}$, then matrix $G=\{G_{\alpha s}\}$
and matrix $A=(\frac{\partial\sigma_{k}}{\partial b_{\beta}^{\alpha}})$ are symmetric positive definite. Also define
\begin{eqnarray*}
\widetilde{F^{st}}=\sum_{\alpha,\beta}\frac{\partial\sigma_{k}}{\partial b_{\beta}^{\alpha}}
(\delta^{\alpha s}-\frac{\gamma_{\alpha}\gamma_{s}}{\omega(\omega+1)})(\delta^{t \beta}-\frac{\gamma_{t}\gamma_{\beta}}{\omega(\omega+1)})
\end{eqnarray*}
Since matrix $F=\{\widetilde{F^{st}}\}=G\cdot A\cdot G$, we have $\{\widetilde{F^{st}}\}$ is positive definite.
\par
Consider test function $\frac{1}{2}|\nabla\gamma|^{2}$, assume it obtains its maximal at $x_{0}\in S^{n}$.
Then critical equation is
\begin{equation} \label{a.8}
(\frac{1}{2}|\nabla\gamma|^{2})_{i}=\sum_{k}\gamma_{k}\gamma_{ki}=0\qquad \forall i
\end{equation}
\par
At $x_{0}$, by proper choice of orthogonal frame, we could assume $\{\gamma_{ij}\}$ is diagonal.
\par
Then at $x_{0}$, we have
\begin{eqnarray} \label{a.9}
0&\geq & \sum_{i,j}\widetilde{F^{ij}}(\frac{1}{2}|\nabla\gamma|^{2})_{ij}=\sum_{i,j,k}(\widetilde{F^{ij}}\gamma_{ki}\gamma_{kj}+
\widetilde{F^{ij}}\gamma_{k}\gamma_{kij})\notag\\
&=&\sum_{i}\widetilde{F^{ii}}\gamma_{ii}^{2}+\sum_{i,j,k}\gamma_{k}\widetilde{F^{ij}}(\gamma_{ijk}-\delta_{ik}\gamma_{j}+\delta_{ij}\gamma_{k})\\
&=&\sum_{i}\widetilde{F^{ii}}\gamma_{ii}^{2}+\sum_{i,j}\widetilde{F^{ij}}\delta_{ij}\cdot |\nabla\gamma|^{2}-\sum_{i,j}\widetilde{F^{ij}}\gamma_{i}\gamma_{j}
+\sum_{i,j,k}\gamma_{k}\widetilde{F^{ij}}\gamma_{ijk}\notag
\end{eqnarray}

\par
From now on till the end of theorem \ref{T3}'s proof, $\sigma_{k}$ means $\sigma_{k}(b_{j}^{i})$.

Differentiate with equation (\ref{a.7}), using critical equation (\ref{a.8}), we have
\begin{equation} \label{a.10}
f_{\theta}=\frac{(n-k)\phi^{n-k-1}\phi^{'}\rho_{\theta}}{\omega^{k-1}}\cdot\sigma_{k}+\sum_{\alpha,\beta}\frac{\phi^{n-k}}{\omega^{k-1}}
\frac{\partial\sigma_{k}}{\partial b_{\beta}^{\alpha}}(b_{\beta}^{\alpha})_{\theta}
\end{equation}
\par
Once again, using critical equation (\ref{a.8})
\begin{eqnarray*}
& &\sum_{\theta}\gamma_{\theta}f_{\theta}=\frac{(n-k)\phi^{n-k}\phi^{'}|\nabla\gamma|^{2}}{\omega^{k-1}}\cdot\sigma_{k}\\
& &+\sum_{\alpha,\beta,\theta,s,t}\frac{\phi^{n-k}}{\omega^{k-1}}\cdot\gamma_{\theta}\cdot
\frac{\partial\sigma_{k}}{\partial b_{\beta}^{\alpha}}(\delta^{\alpha s}-\frac{\gamma_{\alpha}\gamma_{s}}{\omega(\omega+1)})(\delta^{t \beta}-\frac{\gamma_{t}\gamma_{\beta}}{\omega(\omega+1)})(-\gamma_{st\theta}+\phi^{''}\rho_{\theta}\gamma_{s}
\gamma_{t}+\phi^{''}\rho_{\theta}\delta_{st})\\
& &=\frac{(n-k)\phi^{n-k}\phi^{'}|\nabla\gamma|^{2}}{\omega^{k-1}}\cdot\sigma_{k}+
\frac{\phi^{n-k}}{\omega^{k-1}}\sum_{s,t}[\sum_{\theta}\gamma_{\theta}\widetilde{F^{st}}(-\gamma_{st\theta})+
(\widetilde{F^{st}}\gamma_{s}\gamma_{t}+\widetilde{F^{st}}\delta_{st})\phi^{''}\phi\cdot|\nabla\gamma|^{2}]
\end{eqnarray*}
\par
Then we obtain
\begin{equation} \label{a.15}
\sum_{s,t,\theta}\gamma_{\theta}\widetilde{F^{st}}\gamma_{st\theta}=(n-k)\phi^{'}\cdot|\nabla\gamma|^{2}\cdot\sigma_{k}
+\sum_{s,t}(\widetilde{F^{st}}\gamma_{s}\gamma_{t}+\widetilde{F^{st}}\delta_{st})\phi^{''}\phi\cdot|\nabla\gamma|^{2}-
\frac{\omega^{k-1}}{\phi^{n-k}}\sum_{\theta}\gamma_{\theta}f_{\theta}
\end{equation}
Combining (\ref{a.15}) with (\ref{a.9}), now we have
\begin{eqnarray*}
0&\geq & \sum_{i}\widetilde{F^{ii}}\gamma_{ii}^{2}+\sum_{i,j}\widetilde{F^{ij}}\delta_{ij}\cdot |\nabla\gamma|^{2}-\sum_{i,j}\widetilde{F^{ij}}\gamma_{i}\gamma_{j}\\
& &+\sum_{s,t}(\widetilde{F^{st}}\gamma_{s}\gamma_{t}+\widetilde{F^{st}}\delta_{st})\phi^{2}\cdot|\nabla\gamma|^{2}
+(n-k)\phi^{'}\cdot|\nabla\gamma|^{2}\cdot\sigma_{k}-\frac{\omega^{k-1}}{\phi^{n-k}}\sum_{\theta}\gamma_{\theta}f_{\theta}\\
&\geq &(n-k)\phi^{'}\cdot|\nabla\gamma|^{2}\cdot\sigma_{k}-\frac{\omega^{k-1}}{\phi^{n-k}}\sum_{\theta}\gamma_{\theta}f_{\theta}
=(n-k)\phi^{'}\cdot|\nabla\gamma|^{2}\cdot f\cdot\frac{\omega^{k-1}}{\phi^{n-k}}
-\frac{\omega^{k-1}}{\phi^{n-k}}\sum_{\theta}\gamma_{\theta}f_{\theta}\\
&\geq & \frac{\omega^{k-1}}{\phi^{n-k}}[(n-k)\phi^{'}\cdot|\nabla\gamma|^{2}\cdot f-|\nabla\gamma|\cdot|\nabla f|]
\end{eqnarray*}
Because $k<n$, we have $C^{1}$ estimate, i.e. $|\nabla\gamma|\leq C_{2}$, here
$C_{2}\sim inf(f)$, $||f||_{C^{1}}$, $n$, $k$.
\end{proof}

\section{$C^{2}$ estimate}

We now prove $C^{2}$ estimate. We first work on M and obtain its curvature estimate.
Therefore, in this section, all the covariant derivatives are with respect to the induced metric $g_{ij}$
on the hypersurface $M\in \mathbb{H}^{n+1}$.
\par
Choose local orthonormal frame $\{e_{1},\cdots,e_{n}\}$ on M. $\nu=e_{n+1}$ is the unit outer normal of hypersurface.
Let $\{h_{ij}\}$ be the second fundamental form with respect to this frame. Then under this frame,
the following identities hold.
\begin{equation} \label{3.1}
h_{ijk}=h_{ikj}
\end{equation}
\begin{equation} \label{3.2}
h_{iikk}=h_{kkii}+h_{ii}^{2}h_{kk}-h_{ii}h_{kk}^{2}-h_{ii}+h_{kk}
\end{equation}
Let $\lambda_{1},\cdots,\lambda_{n}$ be the principal curvatures of M. Suppose $\lambda_{1}\geq\lambda_{2}\cdots\geq\lambda_{n}$,
then the following equation defined on $S^{n}$
\begin{equation} \label{3.3}
\sigma_{k}(\kappa_{1},\cdots,\kappa_{n})=\sigma_{k}(\tilde{h_{j}^{i}})=\frac{f_{0}}{\phi^{n-1}\sqrt{\phi^{2}+|\nabla \rho|^{2}}}
\end{equation}
can be equivalently expressed as
\begin{equation} \label{3.4}
\sigma_{k}(\lambda_{1},\cdots,\lambda_{n})(X)=u(X)\cdot f(X) \qquad X\in M
\end{equation}
where $f_{0}>0$ is the given function on $S^{n}$. u is the support function.
Let $X=R_{M}(\theta)=(\theta,\rho(\theta))$, $\theta\in S^{n}$, then
\begin{equation} \label{3.5}
f(X)=\frac{f_{0}(\theta)}{\phi^{n+1}(\rho(\theta))}
\end{equation}
\par
Since we already have $C^{0}, C^{1}$ estimates for function $\rho$, it's easy to see for (\ref{3.5}), we have (Guan-Li-Li \cite{7})
\begin{equation} \label{3.6}
|f_{i}(X)|\leq C(n,k,inf(f_{0}),||f_{0}||_{C^{1}})\qquad \qquad \qquad\quad\forall i
\end{equation}
\begin{equation} \label{3.7}
|f_{ij}(X)|\leq C(n,k,inf(f_{0}),||f_{0}||_{C^{2}})(1+\lambda_{1})(X)\qquad \forall i,j
\end{equation}

\bigskip

\begin{theorem} \label{C2}
 If k-convex hypersurface M satisfies equation (\ref{3.4}) (or (\ref{3.3})) for some $1\leq k\leq n$,
then we have
\begin{eqnarray*}
 max_{M}\lambda_{1}\leq C
 \end{eqnarray*}
$C\sim n$, $k$, $\inf(f_{0})$, $||f_{0}||_{C^{2}}$, where $f_{0}$ is the given function on $S^{n}$
\end{theorem}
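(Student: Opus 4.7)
The plan is to apply the maximum principle to an auxiliary function of the form
$$W = \log \Psi + a u + b \Phi(\rho),$$
where $\Psi$ is a smooth approximation of the largest principal curvature $\lambda_1$ in the spirit of Brendle-Choi-Daskalopoulos \cite{brendle}, $u = \langle V,\nu\rangle$ is the generalized support function, $\Phi(\rho) = \int_0^\rho \sinh s\, ds$, and $a,b$ are large constants to be tuned. Working with a smooth approximation of $\lambda_1$ rather than the true eigenvalue is essential: at a simple-eigenvalue maximum one has $\partial_i\partial_j \lambda_1 = h_{11,ij} + 2\sum_{p>1} \frac{h_{1p,i} h_{1p,j}}{\lambda_1 - \lambda_p}$, and the extra nonnegative third-order off-diagonal contributions are exactly what will be needed to dominate the bad third-order terms arising from the non-concavity of $\sigma_k$ for $k<n$.

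At a maximum point $X_0$ of $W$, choose a local orthonormal frame diagonalizing the second fundamental form with $h_{11}(X_0)=\lambda_1$. The critical equation $\nabla W(X_0)=0$ and the test $\sigma_k^{ii}W_{ii}(X_0)\leq 0$, combined with two differentiations of $\sigma_k(\lambda)=uf$, the hyperbolic commutation formula (\ref{3.2}), and the concavity inequality of Lemma \ref{lemma5} applied to $-\sigma_k^{pq,rs}h_{pq,1}h_{rs,1}$, will yield a master inequality schematically of the form
$$0 \geq (\text{good third-order terms}) + c\lambda_1 \sigma_k^{11} - C\sum_i \sigma_k^{ii} - C\lambda_1 - C,$$
where Lemma \ref{lemma2} bounds $\lambda_1 \sigma_k^{11}$ below by a constant multiple of $\sigma_k=uf$, and the first-order quantities coming from $(uf)_i$ are absorbed using estimates (\ref{3.6})-(\ref{3.7}) together with the critical equation.

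I would then split into cases according to the spread of the eigenvalues at $X_0$, for example by comparing $|\lambda_n|$ with $\lambda_1$ and $\lambda_2$ with $\lambda_1$. In the regime where $\lambda_1$ dominates the other eigenvalues, the positive third-order terms $h_{1p,i}^2/(\lambda_1-\lambda_p)$ generated by $\Psi$ are the decisive ingredient for absorbing the bad third-order remainder from $-\sigma_k^{pq,rs}h_{pq,1}h_{rs,1}$; in the more clustered regime, Lemma \ref{lemma5} suffices directly. The principal obstacle, and the feature that distinguishes $\mathbb H^{n+1}$ from the Euclidean setting, is the $-h_{11}$ term produced by commuting $h_{11,ii}$ with $h_{ii,11}$, which competes directly with our leading good term. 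My strategy for handling it is to choose $a,b$ large enough that $\sigma_k^{ii}(au+b\Phi(\rho))_{ii}$ contributes a positive second-order expression comparable to $\lambda_1\sum_i\sigma_k^{ii}h_{ii}^2$; together with the $c\lambda_1\sigma_k^{11}$ term this will dominate the hyperbolic bad term and the curvature-type term $C\lambda_1$, forcing the desired upper bound $\lambda_1\leq C(n,k,\inf f_0,\|f_0\|_{C^2})$.
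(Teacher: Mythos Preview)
Your overall framework---the BCD smooth approximation of $\lambda_1$, maximum principle, case splitting on the eigenvalue configuration---matches the paper, but two key pieces are off.

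The auxiliary function is not right. The paper uses $\log\lambda_1 + \beta\Phi - \log(u-a)$ with $a=\tfrac{1}{N}\inf_M u$ small, not a linear term $au$. The point of $-\log(u-a)$ is that its second derivative contributes both $\tfrac{u}{u-a}\sigma_k^{ii}h_{ii}^2$ and $\sigma_k^{ii}\bigl(\tfrac{u_i}{u-a}\bigr)^2$; via the critical equation $\tfrac{h_{11i}}{\lambda_1}=\tfrac{u_i}{u-a}-\beta\Phi_i$, the latter exactly balances the bad $-\sigma_k^{ii}\bigl(\tfrac{h_{11i}}{\lambda_1}\bigr)^2$ coming from $\log\lambda_1$, leaving a small but positive residue $\tfrac{a}{u-a}\sigma_k^{ii}h_{ii}^2$. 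A linear $au$ term cannot achieve this balance: for $a>0$ the term $-au\,\sigma_k^{ii}h_{ii}^2$ has the wrong sign, and for $a<0$ large the critical equation forces $\bigl(\tfrac{h_{11i}}{\lambda_1}\bigr)^2\sim a^2 u_i^2$, so the bad term scales like $a^2$ against a good term of size $|a|$. Relatedly, the leading good term in the master inequality is $(k-1)uf\cdot\lambda_1$ (of order $\lambda_1$), not $\lambda_1\sigma_k^{11}$, which by Lemma~\ref{lemma2} is only $\ge c\,\sigma_k=O(1)$; and the hyperbolic $-h_{11}$ you single out contributes only $-\sum_i\sigma_k^{ii}$ after division by $\lambda_1$ and is absorbed directly by $\beta\phi'\sum_i\sigma_k^{ii}$ from the $\Phi$ part---it is not the principal obstacle.

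The real difficulty, which your sketch underestimates, is the third-order analysis. A two-way split ($|\lambda_n|$ vs.\ $\lambda_1$, $\lambda_2$ vs.\ $\lambda_1$) is too coarse. The paper runs a cascade of cases indexed by $l=1,\dots,k-1$: in Case $l{+}1$ one has $\lambda_2,\dots,\lambda_l$ comparable to $\lambda_1$ but $\lambda_{l+1}\le\tfrac{1}{C_l}\lambda_1$, and one applies the concavity of $(\sigma_k/\sigma_l)^{1/(k-l)}$ (Lemma~\ref{lemma5}) together with the algebraic identity of Lemma~\ref{lemma6} and sharp two-sided bounds on $\sigma_l$, $\sigma_l^{ii}$, $\sigma_{l-1}(\lambda|ij)$, $\sigma_l(\lambda|ij)$ to show that $-\sum_{i\ne j}\sigma_{k-2}(\lambda|ij)h_{ii1}h_{jj1}$ is bounded below by $-\theta\lambda_1^2$ (with $\theta$ small, controlled by $C_l$ and $N$) plus terms absorbed by the BCD contribution $\sum_{j\ge l+1}\tfrac{2\sigma_k^{jj}}{\lambda_1-\lambda_j}h_{jj1}^2$. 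The constants $C_1,\dots,C_{k-2}$ are chosen inductively, each large relative to the previous; in the terminal Case $k$ one instead uses $\sigma_k^{nn}\ge c\,\lambda_1^{k-1}$. Lemma~\ref{lemma5} alone, without this $l$-dependent refinement, does not close the estimate for $3\le k\le n-1$.
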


\begin{proof}
 We first prove the case $3\leq k\leq n$.
\par

We here derive $C^{2}$ estimate for equation (\ref{3.4}).

Now, let $\lambda_{1}$ denotes the biggest eigenvalue of M. Consider the following test function:
\begin{equation} \label{3.8}
\frac{\lambda_{1}g(\Phi)}{u-a}
\end{equation}
here $\phi(\rho)=\sinh(\rho)$, and $\Phi(\rho)=\int_{0}^{\rho}\phi(s)ds$, $g(\Phi)=e^{\beta\Phi},\beta$ to be chosen. $a=\frac{1}{N}\inf_{M}(u)$ with N large enough depending $n$, $k$, $\inf(f_{0})$, $||f_{0}||_{C^{1}}$ which will be determined later.

Assume this function obtains its maximum at point $x_{0}$. We use the trick from \cite{brendle}.
 Denote $P(x_{0})=\frac{\lambda_{1}g(\Phi)}{u-a}(x_{0})$. Define function $\psi$ satisfying
\begin{eqnarray*}
\frac{\psi(x)g(\Phi)(x)}{(u-a)(x)}=P(x_{0})
\end{eqnarray*}
It's easy to see that $\psi\geq\lambda_{1}$ and $\psi(x_{0})=\lambda_{1}(x_{0})$. Choose proper local orthonormal frame $e_{1},e_{2},\cdots,e_{n}$ around $x_{0}$, so that $\{h_{ij}\}$ is diagonal at $x_{0}$ and $h_{ii}=\lambda_{i}$, where $\lambda_{1}\geq\lambda_{2}\cdots\geq\lambda_{n}$.
\par
Let $\mu$ denote the multiplicity of the biggest curvature eigenvalue at $x_{0}$, such that $\lambda_{1}=\lambda_{2}=\cdots =\lambda_{\mu}>\lambda_{\mu+1}\geq\cdots\geq\lambda_{n}$. Then from Brendle-Choi-Daskalopoulos \cite{brendle} well-known result, we have the followings exist at $x_{0}$:
\begin{equation} \label{3.9}
\psi=\lambda_{1}\quad\quad\quad
\end{equation}
\begin{equation} \label{3.10}
h_{kli}=\psi_{i}\delta_{kl}\quad 1\leq k,l\leq\mu
\end{equation}
\begin{equation} \label{3.11}
\psi_{ii}\geq h_{11ii}+2\sum_{l>\mu}\frac{1}{\lambda_{1}-\lambda_{l}}h_{1li}^{2}
\end{equation}
\par
From now on, we only consider the case with no multiple roots of biggest eigenvalue, i.e $\mu=1$. As we shall see later, the proof of case $\mu>1$ is actually a special case of the proof of case $\mu=1$.
\par
All the calculations are happening at point $x_{0}$. We consider the test function:
\begin{equation} \label{3.12}
\frac{\psi(x) g(\Phi)(x)}{(u-a)(x)}
\end{equation}
\par
Since this test function has constant value, at $x_{0}$, $(\ln(\frac{\psi g(\Phi)}{(u-a)}))_{i}=0$, the critical equation is
\begin{equation} \label{3.15}
\frac{\psi_{i}}{\psi}+\frac{g^{'}\cdot\Phi_{i}}{g(\Phi)}-\frac{u_{i}}{u-a}=\frac{h_{11i}}{\lambda_{1}}+\beta\Phi_{i}-\frac{u_{i}}{u-a}=0
\end{equation}
\par
Also, at $x_{0}$, since $\{h_{ij}\}$ is diagonal, by Lemma 1, $\{\sigma_{k}^{ij}\}$ is also diagonal. Then we have
\begin{eqnarray*}
0  & \geq & \sum_{i}\sigma_{k}^{ii}(\ln\frac{\psi g(\Phi)}{(u-a)})_{ii}  \\
   & \geq & \sum_{i}\sigma_{k}^{ii}\frac{h_{11ii}+2\sum_{l\geq 2}\frac{1}{\lambda_{1}-\lambda_{l}}h_{1li}^{2}}{\lambda_{1}}-\sum_{i}\sigma_{k}^{ii}\frac{h_{11i}^{2}}{\lambda_{1}^{2}}
           +\sum_{i}\frac{g^{''}}{g} \sigma_{k}^{ii}\langle \frac{\partial}{\partial \rho},e_{i}\rangle^{2}\phi^{2}\\
   &  &        +\frac{g^{'}}{g}\sum_{i=1}\sigma_{k}^{ii}\cdot\phi^{'}        -\frac{g^{'}}{g}ku^{2}f-\sum_{i}\frac{(g^{'})^{2}}{g^{2}}\sigma_{k}^{ii}\langle \frac{\partial}{\partial \rho}, e_{i}\rangle^{2}\phi^{2}
-k\phi^{'}\frac{uf}{u-a}\\
   & &  +\sum_{i}\frac{u}{u-a}\sigma_{k}^{ii}h_{ii}^{2}
-\sum_{t}\frac{\phi\langle\frac{\partial}{\partial \rho},e_{t}\rangle(\sigma_{k})_{t}}{u-a}+\sum_{i}\frac{\phi^{2}}{(u-a)^{2}}\sigma_{k}^{ii}h_{ii}^{2}\langle\frac{\partial}{\partial \rho},e_{i}\rangle^{2}
\end{eqnarray*}

Here we already used the following basic results (details of proof in \cite{Junfang}):
\begin{equation} \label{3.16}
u_{i}=\phi h_{ii}\langle\frac{\partial}{\partial\rho},e_{i}\rangle
\end{equation}
\begin{equation} \label{3.17}
u_{ii}=\phi^{'}h_{ii}-uh_{ii}^{2}+\sum_{t}\phi\langle\frac{\partial}{\partial\rho},e_{t}\rangle h_{iit}
\end{equation}
\begin{equation} \label{3.18}
\Phi_{i}=\langle\phi\frac{\partial}{\partial\rho},e_{i}\rangle
\end{equation}
\begin{equation} \label{3.19}
\Phi_{ii}=\phi^{'}-h_{ii}u
\end{equation}
Again, by using (\ref{3.1}),(\ref{3.2}),(\ref{3.4}),(\ref{3.6}),(\ref{3.7}),(\ref{3.15}),(\ref{3.16}),(\ref{3.17}) and following equations
\begin{eqnarray*}
& & \sigma_{k}^{ii}h_{ii11}+\sigma_{k}^{ij,mt}h_{ij1}h_{mt1} = (uf)_{11} \\
& & \frac{h_{11t}\cdot f}{\lambda_{1}}-\frac{(\sigma_{k})_{t}}{u-a} = \frac{u_{t}f}{u-a}-\beta\Phi_{t}f-\frac{u_{t}f+uf_{t}}{u-a}
 = -\beta\Phi_{t}f-\frac{uf_{t}}{u-a}
\end{eqnarray*}
we have
\begin{eqnarray*}
0 &\geq & \underbrace{-\frac{\sum_{i,j,m,t}\sigma_{k}^{ij,mt}h_{ij1}h_{mt1}}{\lambda_{1}}+\sum_{i}2\sigma_{k}^{ii}\frac{\sum_{l\geq 2}\frac{1}{\lambda_{1}-\lambda_{l}}h_{1li}^{2}}{\lambda_{1}}}_{(A)}\underbrace{-\sum_{i}\sigma_{k}^{ii}\frac{h_{11i}^{2}}{\lambda_{1}^{2}}}_{(B)}\\
  & & +(k-1)uf\cdot h_{11}+(\beta\phi^{'}-1)\sum_{i\geq 1}\sigma_{k}^{ii}
+\sum_{i}\frac{a}{u-a}\sigma_{k}^{ii}h_{ii}^{2}+\sum_{i}\sigma_{k}^{ii}(\frac{u_{i}}{u-a})^{2}-C \quad (\clubsuit 1)
\end{eqnarray*}
Here and from now on, $C\sim n$, $k$, $\inf(f_{0})$, $||f_{0}||_{C^{2}}$.

\medskip

{\it Case 1: $|\lambda_{n}|\geq \varepsilon\cdot\lambda_{1}$}
\par
Here $\varepsilon$ is any sufficiently small fixed constant number to be chosen later.
\par
By using concavity of $\sigma_{k}^{1/k}$, i.e $\sum_{i,j}\frac{\partial\sigma_{k}^{1/k}(\lambda_{1},\lambda_{2}\cdots\lambda_{n})}{\partial\lambda_{i}\partial\lambda_{j}}\eta_{i}\eta_{j}\leq 0$
\par
A basic calculation yields
\begin{eqnarray*}
-\frac{\sum_{_{1\leq i,j,m,t\leq n}}\sigma_{k}^{ij,mt}h_{ij1}h_{mt1}}{\lambda_{1}}\geq-\frac{\sum_{i\neq j}\sigma_{k-2}(\lambda |ij)h_{ii1}h_{jj1}}{\lambda_{1}}\geq -C\frac{(\sigma_{k})_{1}^{2}}{\lambda_{1}}\geq -C\lambda_{1}
\end{eqnarray*}
Then $(\clubsuit 1)$ becomes
\begin{equation} \label{3.20}
0\geq -\sum_{i}\sigma_{k}^{ii}\frac{h_{11i}^{2}}{\lambda_{1}^{2}}-C\lambda_{1}+(\beta\phi^{'}-1)\sum_{i\geq 1}\sigma_{k}^{ii}+\sum_{i}\frac{a}{u-a}\sigma_{k}^{ii}h_{ii}^{2}+\sum_{i}\sigma_{k}^{ii}(\frac{u_{i}}{u-a})^{2}-C
\end{equation}
\par
From critical equation, $\frac{h_{11i}}{\lambda_{1}}=-\beta\Phi_{i}+\frac{u_{i}}{u-a}$, then $-\sigma_{k}^{ii}\frac{h_{11i}^{2}}{\lambda_{1}^{2}}\geq -(1+\varepsilon^{'})\sigma_{k}^{ii}(\frac{u_{i}}{u-a})^{2}-C(\varepsilon^{'})\sigma_{k}^{ii}$.
\par
By choosing $\varepsilon^{'}\sim n$, $k$, $\inf(f_{0})$, $||f_{0}||_{C^{1}}$ small enough so that
$\frac{1}{2}\frac{a}{u-a}\sigma_{k}^{ii}h_{ii}^{2}\geq\varepsilon^{'}\sigma_{k}^{ii}(\frac{u_{i}}{u-a})^{2}$.\\
Now equation (41) becomes
\begin{equation} \label{3.21}
0\geq -C\lambda_{1}-C\sum_{i\geq 1}\sigma_{k}^{ii}+c_{0}\sum_{i}\sigma_{k}^{ii}h_{ii}^{2}-C
\end{equation}
$c_{0}$ is a small but positive constant depending $n$, $k$, $\inf(f_{0})$, $||f_{0}||_{C^{1}}$.
\par
At this case, $\sigma_{k}^{nn}h_{nn}^{2}=\sigma_{k}^{nn}\lambda_{n}^{2}\geq\varepsilon^{2}\sigma_{k}^{nn}\lambda_{1}^{2}\gg \sigma_{k}^{ii}\quad \forall 1\leq i\leq n$.
Since $\sigma_{k}^{nn}\geq c(n)\sigma_{k-1}\geq c_{1}$, $c_{1}$ is a small but positive constant depending $n$, $k$, $\inf(f_{0})$, $||f_{0}||_{C^{1}}$.
This means $\sigma_{k}^{nn}\lambda_{n}^{2}\geq \varepsilon^{2}c_{1}\lambda_{1}^{2}\gg \lambda_{1}$.
Therefore, we have our estimate at this case.

\bigskip

We now separate the proof into several cases:
\par
$\emph{Case 2:}$ $\lambda_{2}\leq \frac{1}{C_{1}}\lambda_{1}$. $C_{1}$ is a sufficiently large constant to be chosen.

\medskip

$\emph{Case 3:}$ $\lambda_{2}\geq\frac{1}{C_{1}}\lambda_{1}$, $\lambda_{3}\leq\frac{1}{C_{2}}\lambda_{1}$. $C_{1}$ has been chosen in Case 1. $C_{2}$ is a sufficiently large constant depending n, k, $C_{1}$, $\inf(f_{0})$, $||f_{0}||_{C^{1}}$. $C_{2}$ is to be chosen.

\medskip

$\cdots\cdots\cdots$

\medskip

$\emph{Case $l+1$:}$ $\lambda_{2}\geq\frac{1}{C_{1}}\lambda_{1}$, $\lambda_{3}\geq\frac{1}{C_{2}}\lambda_{1}$,$\cdots$, $\lambda_{l}\geq\frac{1}{C_{l-1}}\lambda_{1}$, $\lambda_{l+1}\leq\frac{1}{C_{l}}\lambda_{1}$. $C_{1},C_{2},\cdots, C_{l-1}$ have been chosen in Case 1, $\cdots$,Case $l$. $C_{l}$ is a sufficiently large constant depending n, k, $C_{1},C_{2},\cdots, C_{l-1}$, $\inf(f_{0})$, $||f_{0}||_{C^{1}}$. $\quad$ $C_{l}$ is to be chosen.

\medskip

$\cdots$
\par
$\emph{Case $k-1$:}$
$\lambda_{2}\geq\frac{1}{C_{1}}\lambda_{1}$, $\lambda_{3}\geq\frac{1}{C_{2}}\lambda_{1}$,$\cdots$, $\lambda_{k-2}\geq\frac{1}{C_{k-3}}\lambda_{1}$,
 $\lambda_{k-1}\leq\frac{1}{C_{k-2}}\lambda_{1}$. $C_{1},C_{2},\cdots, C_{k-3}$ have been chosen in Case 1, $\cdots$,Case $k-2$. $C_{k-2}$ is a sufficiently large constant depending n, k, $C_{1},C_{2},\cdots, C_{k-3}$, $\inf(f_{0})$, $||f_{0}||_{C^{1}}$. $C_{k-2}$ is to be chosen.

\medskip

$\emph{Case $k$:}$ $\lambda_{2}\geq\frac{1}{C_{1}}\lambda_{1}$, $\lambda_{3}\geq\frac{1}{C_{2}}\lambda_{1}$,$\cdots$, $\lambda_{k-2}\geq\frac{1}{C_{k-3}}\lambda_{1}$, $\lambda_{k-1}\geq\frac{1}{C_{k-2}}\lambda_{1}$. $C_{1},C_{2},\cdots, C_{k-2}$ have been chosen in Case 1, $\cdots$,Case $k-1$.

We now prove Case $l+1$, $1\leq l\leq k-2 $.
\par
Back to $(\clubsuit 1)$, let $\lambda=(\lambda_{1},\cdots,\lambda_{n})$

\begin{eqnarray} \label{3.22}
(A)+(B) & \geq & \underbrace{\frac{\sum_{i\geq 2}(2\sigma_{k-2}(\lambda |i1)h_{11i}^{2}+\frac{2\sigma_{k}^{11}}{\lambda_{1}-\lambda_{i}}h_{11i}^{2}-\frac{\sigma_{k}^{ii}}{\lambda_{1}}h_{11i}^{2})}{\lambda_{1}}}_{(C_{1})}\notag\\
& & \underbrace{+\frac{-\sum_{i\neq j}\sigma_{k-2}(\lambda |ij)h_{ii1}h_{jj1}+\sum_{i\geq 2}\frac{2\sigma_{k}^{ii}}{\lambda_{1}-\lambda_{i}}h_{ii1}^{2}}{\lambda_{1}}}_{(C_{2})}
\underbrace{-\sigma_{k}^{11}(\frac{h_{111}}{\lambda_{_{1}}})^{2}}_{(C_{3})}
\end{eqnarray}

A easy calculation shows
\begin{equation} \label{3.23}
(C_{1})=\frac{\sum_{i\geq 2}\frac{\lambda_{1}+\lambda_{i}}{(\lambda_{1}-\lambda_{i})\lambda_{1}}\sigma_{k}^{ii}h_{11i}^{2}}{\lambda_{1}}>0
\end{equation}
This is because we could always assume $|\lambda_{n}|\leq \lambda_{1}$, otherwise we could use Case 1 to prove.
\par
As for $(C_{3})$, using critical equation, same as the proof of Case 1, by letting $\lambda_{1}$ large enough, we have
\begin{equation} \label{3.24}
-\sigma_{k}^{11}(\frac{h_{111}}{\lambda_{_{1}}})^{2}+\frac{a}{u-a}\sigma_{k}^{11}h_{11}^{2}+\sigma_{k}^{11}(\frac{u_{1}}{u-a})^{2}
\geq -C\sigma_{k}^{11}+\frac{1}{2}\frac{a}{u-a}\sigma_{k}^{11}h_{11}^{2}>0
\end{equation}
\par
Therefore, combining (\ref{3.22}), (\ref{3.23}), (\ref{3.24}), $(\clubsuit 1)$ becomes
\begin{eqnarray*}
0\geq \underbrace{\frac{-\sum_{i\neq j}\sigma_{k-2}(\lambda |ij)h_{ii1}h_{jj1}+\sum_{i\geq 2}\frac{2\sigma_{k}^{ii}}{\lambda_{1}-\lambda_{i}}h_{ii1}^{2}}{\lambda_{1}}}_{(C_{2})} \\
+(k-1)\cdot uf\cdot h_{11}+(\beta\phi^{'}-1)\sum_{i\geq 2}\sigma_{k}^{ii}-C\quad\quad (\clubsuit 2)
\end{eqnarray*}
\par
We now begin to prove Case $l+1$:
\par
$\emph{Case $l+1$:}$ $\lambda_{2}\geq\frac{1}{C_{1}}\lambda_{1}$, $\lambda_{3}\geq\frac{1}{C_{2}}\lambda_{1}$,$\cdots$, $\lambda_{l}\geq\frac{1}{C_{l-1}}\lambda_{1}$, $\lambda_{l+1}\leq\frac{1}{C_{l}}\lambda_{1}$. $C_{1},C_{2},\cdots, C_{l-1}$ have been chosen in Case 1, $\cdots$,Case $l$. $C_{l}$ is a sufficiently large constant depending n, k, $C_{1},C_{2},\cdots, C_{l-1}$, $\inf(f_{0})$, $||f_{0}||_{C^{1}}$. $\quad$ $C_{l}$ is to be chosen.
\par
We first prove some basic facts under current case.

\medskip

$\mathbf{FACT}$ $\mathbf{1}$:$\qquad$ $\theta_{2}\lambda_{1}^{l}\geq\sigma_{l}\geq\theta_{1}\lambda_{1}^{l}\quad\quad \theta_{1},\theta_{2}\sim n,C_{1},\cdots, C_{l-1}$
\begin{proof} Since $(\lambda_{1},\cdots,\lambda_{n})\in\Gamma_{k}$, we have $\sigma_{k-1}(\lambda |1)>0$, $\sigma_{k-2}(\lambda |12)>0$,
$\cdots$, $\sigma_{1}(\lambda |1\cdots k-1)>0$.
This means $ \lambda_{k}+\lambda_{k+1}+\cdots +\lambda_{n}>0$.
\par
Suppose $\lambda_{1}>\lambda_{2}\geq\cdots\geq\lambda_{t}>0\geq\lambda_{t+1}\geq\cdots\geq\lambda_{n}$, apparently, $t\geq k$.\\
So
\begin{eqnarray*}
0 &< &\lambda_{k}+\lambda_{k+1}+\cdots +\lambda_{t}+\lambda_{t+1}\cdots+\lambda_{n} \\
  & \leq & (t-k+1)\lambda_{k}+\lambda_{t+1}\cdots+\lambda_{n} \\
  & \leq & \lambda_{n}+(t-k+1)\lambda_{k}\quad\quad \lambda_{j}\leq 0, j\geq t+1
\end{eqnarray*}
therefore
\begin{equation} \label{3.25}
 |\lambda_{n}|\leq c(n)\lambda_{k}\leq c(n)\lambda_{l+1}
\end{equation}
Using (\ref{3.25}), we have
\begin{eqnarray*}
 \sigma_{l} &=& \lambda_{1}\lambda_{2}\cdots\lambda_{l}  +\sum_{i_{1}<\cdots <i_{l}}\lambda_{i_{1}}\cdots\lambda_{i_{l}}\quad i_{l}\geq l+1 \\
            & & \geq \lambda_{1}\lambda_{2}\cdots\lambda_{l}-c(n)\lambda_{1}\lambda_{2}\cdots\lambda_{l-1}\lambda_{l+1}
\end{eqnarray*}
\par
Since $\lambda_{l}\geq \frac{1}{C_{l-1}}\lambda_{1}$, $\lambda_{l+1}\leq \frac{1}{C_{l}}\lambda_{1}$,
we simply let $C_{l}$ satisfying
\begin{eqnarray*}
C_{l}\geq 2c(n)\cdot C_{l-1} \qquad \qquad\qquad\qquad\quad (\spadesuit 1)
\end{eqnarray*}
Then we have
\begin{eqnarray*} \label{3.26}
 \sigma_{l}\geq\frac{1}{2}\lambda_{1}\lambda_{2}\cdots\lambda_{l}\geq \theta_{1}\lambda_{1}^{l},\quad \theta_{1}\sim n,C_{1},\cdots, C_{l-1}
\end{eqnarray*}
the upper bound is obvious.
\end{proof}

The following facts can be proved similarly as Fact 1:

\medskip

$\mathbf{\mathbf{FACT}}$ $\mathbf{2:}$ $\theta_{4}\lambda_{1}^{l-1}\geq\sigma_{l}^{nn}\geq\cdots\geq\sigma_{l}^{11}\geq\theta_{3}\lambda_{1}^{l-1}\quad\quad \theta_{3},\theta_{4}\sim n,C_{1},\cdots, C_{l-1}$

\medskip

$\mathbf{FACT}$ $\mathbf{3:}$ $\sigma_{l-2}(\lambda |ij)\leq\theta_{5}\cdot\lambda_{1}^{l-2}$

\medskip

Remark: $\theta_{2},\theta_{4},\theta_{5}$ only depend on n. $\theta_{1},\theta_{3}$ only depend on $n,C_{1},\cdots, C_{l-1}$.
No matter how big $C_{l}$ is, as long as it satisfying $C_{l}\geq 2c(n)C_{l-1}$, we will have these facts exist.

\medskip

Now we handle Term $(C_{2})$.
By using concavity of $(\frac{\sigma_{k}}{\sigma_{l}}(\lambda_{1},\cdots,\lambda_{n}))^{\frac{1}{k-l}}$, i.e
\begin{eqnarray*}
\sum_{i,j}(\frac{\sigma_{k}}{\sigma_{l}}(\lambda_{1},\cdots,\lambda_{n}))^{\frac{1}{k-l}}_{ij}\eta_{i}\eta_{j}\leq 0
\end{eqnarray*}
Using Lemma 5, we have
\begin{eqnarray*}
 -\sum_{i\neq j}\sigma_{k-2}(\lambda |ij)h_{ii1}h_{jj1} &\geq & (1+\frac{1}{k-l})\sigma_{k}\cdot (\frac{\sum_{i=1}^{n}\sigma_{l}^{ii}h_{ii1}}{\sigma_{l}})^{2}-\frac{2}{k-l}(\sigma_{k})_{1}\cdot \frac{\sum_{i=1}^{n}\sigma_{l}^{ii}h_{ii1}}{\sigma_{l}}\\
 & &-(1-\frac{1}{k-l})\frac{(\sigma_{k})_{1}^{2}}{\sigma_{k}}-\sigma_{k}\frac{\sum_{i\neq j}\sigma_{l-2}(\lambda |ij)h_{ii1}h_{jj1}}{\sigma_{l}}
\end{eqnarray*}
Therefore
\begin{eqnarray*}
-\sum_{i\neq j}\sigma_{k-2}(\lambda |ij)h_{ii1}h_{jj1} &\geq & \underbrace{(1+\frac{1}{k-l})\sigma_{k}\cdot (\frac{\sigma_{l}^{11}h_{111}}{\sigma_{l}})^{2}-\frac{2}{k-l}(\sigma_{k})_{1}\cdot \frac{\sigma_{l}^{11}h_{111}}{\sigma_{l}}}_{(D)}\\
& &\underbrace{-(1-\frac{1}{k-l})\frac{(\sigma_{k})_{1}^{2}}{\sigma_{k}}}_{(D)}
+\underbrace{2(1+\frac{1}{k-l})\sigma_{k}\cdot(\frac{\sigma_{l}^{11}h_{111}}{\sigma_{l}}) \cdot\frac{\sum_{i=2}^{l}\sigma_{l}^{ii}h_{ii1}}{\sigma_{l}}}_{(E11)}\\
& & \underbrace{-\frac{2}{k-l}(\sigma_{k})_{1}\cdot \frac{\sum_{i=2}^{l}\sigma_{l}^{ii}h_{ii1}}{\sigma_{l}}}_{(E12)}
\underbrace{-2\sigma_{k}\frac{\sum_{i=2}^{l}\sigma_{l-2}(\lambda |1i)h_{111}h_{ii1}}{\sigma_{l}}}_{(E13)}\\
& & \underbrace{+(1+\frac{1}{k-l})\sigma_{k}\cdot (\frac{\sum_{i=2}^{l}\sigma_{l}^{ii}h_{ii1}}{\sigma_{l}})^{2}-2\sigma_{k}\frac{\sum_{2\leq i<j\leq l}\sigma_{l-2}(\lambda |ij)h_{ii1}h_{jj1}}{\sigma_{l}}}_{(E2)}\\
& &\underbrace{+(1+\frac{1}{k-l})\sigma_{k}\cdot (\frac{\sum_{j\geq l+1}\sigma_{l}^{jj}h_{jj1}}{\sigma_{l}})^{2}
-\frac{2}{k-l}(\sigma_{k})_{1}\cdot \frac{\sum_{j\geq l+1}\sigma_{l}^{jj}h_{jj1}}{\sigma_{l}}}_{(F)}\\
& &\underbrace{-2\sigma_{k}\frac{\sum_{j\geq l+1}\sigma_{l-2}(\lambda |1j)h_{111}h_{jj1}}{\sigma_{l}}-2\sigma_{k}\frac{\sum_{2\leq i\leq l,j\geq l+1}\sigma_{l-2}(\lambda |ij)h_{ii1}h_{jj1}}{\sigma_{l}}}_{(F)}\\
& & +\underbrace{2(1+\frac{1}{k-l})\sigma_{k}\cdot(\frac{\sum_{i\geq 2}^{l}\sigma_{l}^{ii}h_{ii1}}{\sigma_{l}}) \cdot\frac{\sum_{j\geq l+1}\sigma_{l}^{jj}h_{jj1}}{\sigma_{l}}}_{(F)}\\
& & +\underbrace{2(1+\frac{1}{k-l})\sigma_{k}\cdot(\frac{\sigma_{l}^{11}h_{111}}{\sigma_{l}}) \cdot\frac{\sum_{j\geq l+1}\sigma_{l}^{jj}h_{jj1}}{\sigma_{l}}}_{(F)}\\
& & \underbrace{-2\sigma_{k}\frac{\sum_{l+1\leq m<j}\sigma_{l-2}(\lambda |mj)h_{mm1}h_{jj1}}{\sigma_{l}}}_{(F)}
\end{eqnarray*}

By Fact 1, Fact 2, Fact 3, it's easy for us to obtain
\begin{eqnarray*}
 (F) &\geq & -\varepsilon_{1}u_{1}^{2}-\varepsilon_{2}\cdot\sigma_{k}\cdot\sum_{i\geq 2}^{l}(\frac{\sigma_{l}^{ii}}{\sigma_{l}})^{2}h_{ii1}^{2}\\
& & -C(\varepsilon_{1},\varepsilon_{2},n,C_{1},\cdots,C_{l-1})\sum_{j\geq l+1}\frac{h_{jj1}^{2}}{\lambda_{1}^{2}}-C\qquad\qquad \mathbf{(\infty 1)}
\end{eqnarray*}
 $\varepsilon_{1},\varepsilon_{2}$ are small constants to be chosen. (Here we already used critical equation: $\frac{h_{111}}{\lambda_{1}}+\beta\Phi_{1}-\frac{u_{1}}{u-a}=0$)

The main part of this proof is how to handle term $(E11),(E12),(E13)$.
\par
We first handle term $(E12)$.
\par
By critical equation,
\begin{equation} \label{3.27}
\frac{h_{111}}{\lambda_{1}}=\frac{u_{1}f}{(u-a)f}-\beta\Phi_{1}=\frac{(\sigma_{k})_{1}}{(u-a)f}-\frac{uf_{1}}{(u-a)f}-\beta\Phi_{1}
\end{equation}
Denote $ \tau :=\frac{uf_{1}}{(u-a)f}+\beta\Phi_{1}$. From now on, $\tau$ is a constant size value which can be controlled.\\
\begin{eqnarray*}
(E12) &= &-\frac{2}{k-l}\frac{(u-a)f}{\lambda_{1}}h_{111}\sum_{i\geq 2}^{l}\frac{\sigma_{l}^{ii}h_{ii1}}{\sigma_{l}}+\tau\sum_{i\geq 2}^{l}\frac{\sigma_{l}^{ii}h_{ii1}}{\sigma_{l}} \\
      &=&-\frac{2}{k-l}(\frac{(u-a)f}{\sigma_{k}}\cdot\frac{\sigma_{l}}{\lambda_{1}\sigma_{l}^{11}}-1)\sigma_{k}\frac{\sigma_{l}^{11}h_{111}}{\sigma_{l}}\sum_{i\geq 2}^{l}\frac{\sigma_{l}^{ii}h_{ii1}}{\sigma_{l}}\\
      &  &-\frac{2}{k-l}\sigma_{k}\frac{\sigma_{l}^{11}h_{111}}{\sigma_{l}}\sum_{i\geq 2}^{l}\frac{\sigma_{l}^{ii}h_{ii1}}{\sigma_{l}}+\tau\sum_{i\geq 2}^{l}\frac{\sigma_{l}^{ii}h_{ii1}}{\sigma_{l}}\\
      &= &-\frac{2}{k-l}(\frac{(u-a)f}{\sigma_{k}}-1+1-\frac{\sigma_{l}^{11}\lambda_{1}}{\sigma_{l}})\frac{\sigma_{k}}{(u-a)f}(\sigma_{k})_{1}\sum_{i\geq 2}^{l}\frac{\sigma_{l}^{ii}h_{ii1}}{\sigma_{l}}\\
      &  &-\frac{2}{k-l}\sigma_{k}\frac{\sigma_{l}^{11}h_{111}}{\sigma_{l}}\sum_{i\geq 2}^{l}\frac{\sigma_{l}^{ii}h_{ii1}}{\sigma_{l}}+\tau\sum_{i\geq 2}^{l}\frac{\sigma_{l}^{ii}h_{ii1}}{\sigma_{l}}\\
      &= &-\frac{2}{k-l}(-\frac{a}{u}+1-\frac{\sigma_{l}^{11}\lambda_{1}}{\sigma_{l}})\frac{\sigma_{k}}{(u-a)f}(\sigma_{k})_{1}\sum_{i\geq 2}^{l}\frac{\sigma_{l}^{ii}h_{ii1}}{\sigma_{l}}\\
      &  &-\frac{2}{k-l}\sigma_{k}\frac{\sigma_{l}^{11}h_{111}}{\sigma_{l}}\sum_{i\geq 2}^{l}\frac{\sigma_{l}^{ii}h_{ii1}}{\sigma_{l}}+\tau\sum_{i\geq 2}^{l}\frac{\sigma_{l}^{ii}h_{ii1}}{\sigma_{l}}
\end{eqnarray*}

Before continue to handle $(E12)$, we prove the following Fact 4:
\par
$\mathbf{FACT}$ $\mathbf{4:}$ $\quad$ $1\geq\frac{\sigma_{l}^{11}\lambda_{1}}{\sigma_{l}}\geq 1-C(n,C_{1},\cdots,C_{l-1})\frac{1}{C_{l}}$
\begin{proof} At current case, $\frac{\sigma_{l}^{11}\lambda_{1}}{\sigma_{l}}=\frac{\sigma_{l}^{11}\lambda_{1}}{\sigma_{l}^{11}\lambda_{1}+\sigma_{l}(\lambda |1)}\leq 1$ (Since $l\leq k-2$). Besides, same as the proof of Fact 1:
\begin{eqnarray*}
\sigma_{l}(\lambda |1)\leq c(n)\lambda_{2}\cdots\lambda_{l}\cdot\lambda_{l+1}
\end{eqnarray*}
Therefore,
\begin{eqnarray*}
1-\frac{\sigma_{l}^{11}\lambda_{1}}{\sigma_{l}}=\frac{\sigma_{l}(\lambda |1)}{\sigma_{l}^{11}\lambda_{1}+\sigma_{l}(\lambda |1)}\leq\frac{\sigma_{l}(\lambda |1)}{\sigma_{l}^{11}\lambda_{1}}\\
\leq\frac{c(n)\lambda_{2}\cdots\lambda_{l}\cdot\lambda_{l+1}}{\theta_{3}\cdot\lambda_{1}^{l}}\leq \frac{c(n)}{\theta_{3}}\cdot\frac{1}{C_{l}}
\end{eqnarray*}
This implies,
\begin{eqnarray*}
1\geq\frac{\sigma_{l}^{11}\lambda_{1}}{\sigma_{l}}\geq 1-C(n,C_{1},\cdots,C_{l-1})\frac{1}{C_{l}}
\end{eqnarray*}
\end{proof}
\par
Define $\theta=\max\{\frac{1}{N},C(n,C_{1},\cdots,C_{l-1})\frac{1}{C_{l}}\}$, as we shall see, $\theta$ can be really small.
\par
Now we have
\begin{eqnarray} \label{3.28}
(E12) &\geq &-C\theta(\sigma_{k})_{1}\sum_{i\geq 2}^{l}\frac{\sigma_{l}^{ii}h_{ii1}}{\sigma_{l}}-\frac{2}{k-l}\sigma_{k}\frac{\sigma_{l}^{11}h_{111}}{\sigma_{l}}\sum_{i\geq 2}^{l}\frac{\sigma_{l}^{ii}h_{ii1}}{\sigma_{l}}+\tau\sum_{i\geq 2}^{l}\frac{\sigma_{l}^{ii}h_{ii1}}{\sigma_{l}}
\end{eqnarray}
\par
By using equation (\ref{3.28}),
\begin{align*}
&(E11)+(E12)+(E13)\geq
2\sigma_{k}\cdot(\frac{\sigma_{l}^{11}h_{111}}{\sigma_{l}}) \cdot\frac{\sum_{i=2}^{l}\sigma_{l}^{ii}h_{ii1}}{\sigma_{l}}\notag \\
&-2\sigma_{k}\frac{\sum_{i=2}^{l}\sigma_{l-2}(\lambda |1i)h_{111}h_{ii1}}{\sigma_{l}}-C\theta(\sigma_{k})_{1}\sum_{i\geq 2}^{l}\frac{\sigma_{l}^{ii}h_{ii1}}{\sigma_{l}}+\tau\sum_{i\geq 2}^{l}\frac{\sigma_{l}^{ii}h_{ii1}}{\sigma_{l}}  \\
&=\frac{2\sigma_{k}}{\sigma_{l}^{2}}\sum_{i=2}^{l}(\sigma_{l}^{11}\sigma_{l}^{ii}-\sigma_{l}\sigma_{l-2}(\lambda |1i))h_{111}h_{ii1}-C\theta(\sigma_{k})_{1}\sum_{i\geq 2}^{l}\frac{\sigma_{l}^{ii}h_{ii1}}{\sigma_{l}}+\tau\sum_{i\geq 2}^{l}\frac{\sigma_{l}^{ii}h_{ii1}}{\sigma_{l}}\notag\\
&=\frac{2\sigma_{k}}{\sigma_{l}^{2}}\sum_{i=2}^{l}(\sigma_{l-1}(\lambda |1i)^{2}-\sigma_{l}(\lambda |1i)\sigma_{l-2}(\lambda |1i))h_{111}h_{ii1}-C\theta(\sigma_{k})_{1}\sum_{i\geq 2}^{l}\frac{\sigma_{l}^{ii}h_{ii1}}{\sigma_{l}}+\tau\sum_{i\geq 2}^{l}\frac{\sigma_{l}^{ii}h_{ii1}}{\sigma_{l}}\notag
\end{align*}
the last equation we used Lemma 6.
\par
Using the same method in the proof of Fact 1, we obtain
\begin{eqnarray}\label{3.29}
|\sigma_{l-1}(\lambda |1i)| &\leq& c(n)\lambda_{1}\lambda_{2}\cdots\lambda_{l-2}\cdot\lambda_{l+1}\leq \frac{c(n)}{C_{l}}\cdot\lambda_{1}^{l-1}
\end{eqnarray}
\begin{eqnarray}\label{3.30}
|\sigma_{l}(\lambda |1i)|   &\leq& c(n)\lambda_{1}\lambda_{2}\cdots\lambda_{l-2}\cdot\lambda_{l+1}^{2}\leq\frac{c(n)}{C_{l}^{2}}\cdot\lambda_{1}^{l}
\end{eqnarray}
\par
Using Fact 1, Fact 2, Fact 3, (\ref{3.29}), (\ref{3.30}), we have
\begin{eqnarray*}
&(E11)+(E12)+(E13)\geq -C\frac{1}{\theta_{1}^{2}}\frac{1}{C_{l}^{2}}\frac{|h_{111}|}{\lambda_{1}}\sum_{i=2}^{l}\frac{|h_{ii1}|}{\lambda_{1}}
-C\theta(\sigma_{k})_{1}\sum_{i\geq 2}^{l}\frac{\sigma_{l}^{ii}h_{ii1}}{\sigma_{l}}+\tau\sum_{i\geq 2}^{l}\frac{\sigma_{l}^{ii}h_{ii1}}{\sigma_{l}}\\
&\geq -C\frac{1}{\theta_{1}^{2}}\frac{1}{C_{l}^{2}}\frac{|h_{111}|}{\lambda_{1}}\sum_{i=2}^{l}|\frac{\sigma_{l}^{ii}}{\sigma_{l}}h_{ii1}|
-C\theta(\sigma_{k})_{1}\sum_{i\geq 2}^{l}\frac{\sigma_{l}^{ii}h_{ii1}}{\sigma_{l}}+\tau\sum_{i\geq 2}^{l}\frac{\sigma_{l}^{ii}h_{ii1}}{\sigma_{l}}\\
&\geq -(\frac{C(n,C_{1},\cdots,C_{l-1})}{C_{l}^{2}}+C\theta)\lambda_{1}^{2}-(\frac{C(n,C_{1},\cdots,C_{l-1})}{C_{l}^{2}}+C\theta)\sum_{i=2}^{l}(\frac{\sigma_{l}^{ii}}{\sigma_{l}})^{2}(h_{ii1})^{2}-C
\qquad\quad \mathbf{(\infty 2)}
\end{eqnarray*}

The second inequality we used Fact 1 and Fact 2. The last inequality we used critical equation.
\par
Now to $(E2)$, the method is same as handling (E11)+(E12)+(E13)
\begin{eqnarray*}
(E2)&=&(1+\frac{1}{k-l})\sigma_{k}\cdot (\frac{\sum_{i=2}^{l}\sigma_{l}^{ii}h_{ii1}}{\sigma_{l}})^{2}-2\sigma_{k}\frac{\sum_{2\leq i<j\leq l}\sigma_{l-2}(\lambda |ij)h_{ii1}h_{jj1}}{\sigma_{l}}\\
&\geq& \sigma_{k}\cdot (\frac{\sum_{i=2}^{l}\sigma_{l}^{ii}h_{ii1}}{\sigma_{l}})^{2}-2\sigma_{k}\frac{\sum_{2\leq i<j\leq l}\sigma_{l-2}(\lambda |ij)h_{ii1}h_{jj1}}{\sigma_{l}}\\
& &=\sigma_{k}\sum_{i=2}^{l}(\frac{\sigma_{l}^{ii}}{\sigma_{l}})^{2}h_{ii1}^{2}+\frac{2\sigma_{k}}{\sigma_{l}^{2}}\sum_{2\leq i<j\leq l}(\sigma_{l}^{ii}\sigma_{l}^{jj}-\sigma_{l}\sigma_{l-2}(\lambda |ij))h_{ii1}h_{jj1}\\
& &=\sigma_{k}\sum_{i=2}^{l}(\frac{\sigma_{l}^{ii}}{\sigma_{l}})^{2}h_{ii1}^{2}+\frac{2\sigma_{k}}{\sigma_{l}^{2}}\sum_{2\leq i<j\leq l}(\sigma_{l-1}(\lambda |ij)^{2}-\sigma_{l}(\lambda |ij)\sigma_{l-2}(\lambda |ij))h_{ii1}h_{jj1}\\
& &\geq \sigma_{k}\sum_{i=2}^{l}(\frac{\sigma_{l}^{ii}}{\sigma_{l}})^{2}h_{ii1}^{2}-2\sigma_{k}\frac{C(n,C_{1},\cdots,C_{l-1})}{C_{l}^{2}}\sum_{2\leq i< j\leq l}
\frac{|h_{ii1}|}{\lambda_{1}}\frac{|h_{jj1}|}{\lambda_{1}}\\
& &\geq (1-\frac{C(n,C_{1},\cdots,C_{l-1})}{C_{l}})\sigma_{k}\sum_{i=2}^{l}(\frac{\sigma_{l}^{ii}}{\sigma_{l}})^{2}h_{ii1}^{2}\qquad \qquad\quad \mathbf{(\infty 3)}
\end{eqnarray*}
In the second last inequality, we used similar result like (\ref{3.29}), (\ref{3.30}).
In the last equality, we used Fact 1 and Fact 2 like before.
\par
Lastly, we handle term (D). Using critical equation $\frac{h_{111}}{\lambda_{1}}+\beta\Phi_{1}-\frac{u_{1}}{u-a}=0$, we have
\begin{eqnarray*}
(D)&\geq& (1+\frac{1}{k-l}) (\frac{\sigma_{l}^{11}\lambda_{1}}{\sigma_{l}})^{2}(\frac{\sigma_{k}}{(u-a)f})^{2}\cdot\frac{(\sigma_{k})_{1}^{2}}{\sigma_{k}}\\
& &-\frac{2}{k-l}\frac{\sigma_{l}^{11}\lambda_{1}}{\sigma_{l}}\cdot\frac{\sigma_{k}}{(u-a)f}\cdot\frac{(\sigma_{k})_{1}^{2}}{\sigma_{k}}
-(1-\frac{1}{k-l})\frac{(\sigma_{k})_{1}^{2}}{\sigma_{k}}-C\lambda_{1}\\
& =&\frac{2}{k-l}\frac{\sigma_{l}^{11}\lambda_{1}}{\sigma_{l}}\cdot\frac{\sigma_{k}}{(u-a)f}
(\frac{\sigma_{l}^{11}\lambda_{1}}{\sigma_{l}}\cdot\frac{\sigma_{k}}{(u-a)f}-1)\cdot\frac{(\sigma_{k})_{1}^{2}}{\sigma_{k}}\\
& &+(1-\frac{1}{k-l}) (\frac{\sigma_{l}^{11}\lambda_{1}}{\sigma_{l}}\frac{\sigma_{k}}{(u-a)f}+1)(\frac{\sigma_{l}^{11}\lambda_{1}}{\sigma_{l}}\frac{\sigma_{k}}{(u-a)f}-1)
\cdot\frac{(\sigma_{k})_{1}^{2}}{\sigma_{k}}-C\lambda_{1}\\
&\geq& -c(n)|\frac{\sigma_{l}^{11}\lambda_{1}}{\sigma_{l}}\frac{\sigma_{k}}{(u-a)f}-1|\frac{(\sigma_{k})_{1}^{2}}{\sigma_{k}}-C\lambda_{1}\\
&=&-c(n)\frac{\sigma_{l}^{11}\lambda_{1}}{\sigma_{l}}|\frac{\sigma_{k}}{(u-a)f}-1+1-\frac{\sigma_{l}}{\sigma_{l}^{11}\lambda_{1}}|
\frac{(\sigma_{k})_{1}^{2}}{\sigma_{k}}-C\lambda_{1}\\
&\geq&-c(n)\cdot\theta\cdot\frac{(\sigma_{k})_{1}^{2}}{\sigma_{k}}-C\lambda_{1}\geq -C\cdot\theta\cdot\lambda_{1}^{2}-C\lambda_{1}\qquad\qquad\qquad\qquad \mathbf{(\infty 4)}
\end{eqnarray*}
\par
Now combine $(\infty 1), (\infty 2), (\infty 3), (\infty 4)$. We choose $\varepsilon_{2}$ small enough, say 1/10, and $C_{l}\geq A_{1}C(n,C_{1},\cdots,C_{l-1})$, $N\geq A_{2}\qquad\qquad\quad (\spadesuit 2)$.
\par
Here $A_{1}$, $A_{2}$ are sufficiently large constant only depending on $n$, $k$, $\inf(f_{0})$, $||f_{0}||_{C^{1}}$,
then we have
\begin{equation} \label{3.31}
 -\sum_{i\neq j}\sigma_{k-2}(\lambda |ij)h_{ii1}h_{jj1}\geq -(C\cdot\theta+\varepsilon_{1})\cdot\lambda_{1}^{2}
-C(\varepsilon_{1},\varepsilon_{2},n,C_{1},\cdots,C_{l-1})\sum_{j\geq l+1}\frac{h_{jj1}^{2}}{\lambda_{1}^{2}}
\end{equation}
\par
Before we give a final proof of Case l+1, we first prove a Lemma:
\par
\begin{lemma}:  $\quad$ For any sufficiently large constant L, we could always find sufficiently large constant $C_{l}$, $2\leq l\leq k-1$. $C_{l}$ depends on $n$, $k$, $L$.
\par
          If $\lambda_{l}\leq\frac{1}{C_{l}}\lambda_{1}$, then we have $\sigma_{k}^{nn}\geq\cdots\geq\sigma_{k}^{ll}\geq\frac{L}{\lambda_{1}}$.

\begin{proof}
 Using Lemma \ref{lemma2}, since $\sigma_{k}^{11}\geq\frac{c_{0}}{\lambda_{1}}$, $c_{0}\sim n$, $k$, $inf(f_{0})$, $||f_{0}||_{C^{1}}$, we only need to prove $\sigma_{k}^{ll}\geq L\cdot\sigma_{k}^{11}$.
 \begin{eqnarray*}
       \frac{\sigma_{k}^{ll}}{\sigma_{k}^{11}}&=&\frac{\sigma_{k-1}(\lambda |l)}{\sigma_{k-1}(\lambda |1)}=\frac{\lambda_{1}\sigma_{k-2}(\lambda |1l)
       +\sigma_{k-1}(\lambda |1l)}{\lambda_{l}\sigma_{k-2}(\lambda |1l)+\sigma_{k-1}(\lambda |1l)}\\
       &=& \frac{(\lambda_{1}-\lambda_{l})\cdot \sigma_{k-2}(\lambda |1l)}{\lambda_{l}\sigma_{k-2}(\lambda |1l)+\sigma_{k-1}(\lambda |1l)}+1
 \end{eqnarray*}
 \par
       If $\sigma_{k-1}(\lambda |1l)<0$, then we have $\frac{\sigma_{k}^{ll}}{\sigma_{k}^{11}}\geq\frac{\lambda_{1}-\lambda_{l}}{\lambda_{l}}\geq C_{l}-1$. Let $C_{l}=L+1$, we have our result.
       \par
       If $\sigma_{k-1}(\lambda |1l)\geq 0$, then by Maclaurin inequality $\sigma_{k-1}(\lambda|1l)\leq \sigma_{k-2}(\lambda |1l)^{\frac{k-1}{k-2}}$, then
       \begin{eqnarray*}
       \frac{\sigma_{k}^{ll}}{\sigma_{k}^{11}}\geq \frac{(\lambda_{1}-\lambda_{l})\cdot \sigma_{k-2}(\lambda |1l)}{\lambda_{l}\sigma_{k-2}(\lambda |1l)+\sigma_{k-2}(\lambda |1l)^{\frac{k-1}{k-2}}}
       \end{eqnarray*}
       Since $k-2\geq l-1$, we have
       \begin{eqnarray*}
       \sigma_{k-2}(\lambda |1l)=\sum_{i_{1}<i_{2}\cdots<i_{k-2};\quad i_{k-2}> l}\lambda_{i_{1}}\cdots\lambda_{i_{k-2}}
       \end{eqnarray*}
       Same reason as before
       \begin{eqnarray*}
       \sigma_{k-2}(\lambda |1l)\leq c(n)\cdot\lambda_{1}^{k-3}\cdot\lambda_{l}
       \end{eqnarray*}
       Therefore
       \begin{eqnarray*}
       \frac{(\lambda_{1}-\lambda_{l})\cdot \sigma_{k-2}(\lambda |1l)}{\sigma_{k-2}(\lambda |1l)^{\frac{k-1}{k-2}}}
       &=&(\frac{(\lambda_{1}-\lambda_{l})^{k-2}}{\sigma_{k-2}(\lambda |1l)})^{\frac{1}{k-2}}\\
       &\geq& (1-\frac{1}{C_{l}})(\frac{\lambda_{1}^{k-2}}{\sigma_{k-2}(\lambda |1l)})^{\frac{1}{k-2}}
       \geq (1-\frac{1}{C_{l}})(\frac{1}{c(n)})^{\frac{1}{k-2}}(\frac{\lambda_{1}}{\lambda_{l}})^{\frac{1}{k-2}}\\
       &\geq& (1-\frac{1}{C_{l}})(\frac{1}{c(n)})^{\frac{1}{k-2}}C_{l}^{\frac{1}{k-2}}\\
       \end{eqnarray*}
       Choosing $C_{l}$, large enough, depending $n$, $k$, $L$, we have our result.
       \end{proof}
       \end{lemma}
\par
Now we deal with Case l+1, ($l\leq k-2$)
\par
Firstly, we choose $\beta=2$, this implies $(\beta\phi^{'}-1)\sum_{i=1}^{n}\sigma_{k}^{ii}\geq \sum_{i=1}^{n}\sigma_{k}^{ii}$\\

Using (\ref{3.31}), $(\clubsuit 2)$ becomes
\begin{eqnarray} \label{888}
 0 &\geq & -(c(n)\cdot\theta+\varepsilon_{1})\cdot\lambda_{1}+(k-1)\cdot uf\cdot\lambda_{1}-C\\
& &+\frac{-C(\varepsilon_{1},\varepsilon_{2},n,C_{1},\cdots, C_{l-1})\sum_{j\geq l+1}\frac{h_{jj1}^{2}}{\lambda_{1}^{2}}
+\sum_{j\geq l+1}\frac{2\sigma_{k}^{jj}}{\lambda_{1}-\lambda_{j}}h_{jj1}^{2}}{\lambda_{1}}\notag
\end{eqnarray}
here $\theta=\max\{\frac{1}{N},\frac{C(n,C_{1},\cdots, C_{l-1})}{C_{l}}\}$.
\par
Firstly, we choose $N$, $C_{l}$ large enough such that
\begin{eqnarray*}
 C_{l}\geq A_{3}C(n,C_{1},\cdots, C_{l-1}),\quad N\geq A_{4},\quad
A_{3},A_{4}\sim n, k, \inf(f_{0}), ||f_{0}||_{C^{1}} \qquad (\spadesuit 3)
\end{eqnarray*}
Then choose $\varepsilon_{1}$ small enough so that
\begin{equation} \label{3.32}
-(c(n)\cdot\theta+\varepsilon_{1})\cdot\lambda_{1}+(k-1)\cdot uf\cdot\lambda_{1}\geq \frac{k-1}{2}\cdot uf\cdot\lambda_{1}
\end{equation}
\par
Now, since $\varepsilon_{1}$, $\varepsilon_{2}$, $C_{1},\cdots, C_{l-1}$ have been chosen, and we also have
\begin{eqnarray*}
\sum_{j\geq l+1}\frac{2\sigma_{k}^{jj}}{\lambda_{1}-\lambda_{j}}h_{jj1}^{2}\geq c(n)\sum_{j\geq l+1}\frac{\sigma_{k}^{jj}}{\lambda_{1}}h_{jj1}^{2}
\end{eqnarray*}
Using Lemma 7, we could choose $C_{l}$ large enough, satisfying
\begin{eqnarray*}
 C_{l}\geq A_{5}C(n,C_{1},\cdots, C_{l-1}),\quad A_{5}\sim n, k, \inf(f_{0}), ||f_{0}||_{C^{1}} \qquad\qquad (\spadesuit 4)
 \end{eqnarray*}
So that we have
\begin{equation} \label{3.33}
 \sigma_{k}^{jj}\geq\frac{C(\varepsilon_{1},\varepsilon_{2},n,C_{1},\cdots, C_{l-1})}{c(n)}\frac{1}{\lambda_{1}}\quad (j\geq l+1)
 \end{equation}
\par
Now combing $(\spadesuit 1)$, $(\spadesuit 2)$, $(\spadesuit 3)$, $(\spadesuit 4)$, we finally decide:
\begin{eqnarray*}
 C_{l}&=&\max\{ 2c(n)C_{l-1},\max\{A_{1},A_{3},A_{5}\}C(n,C_{1},\cdots, C_{l-1})\}\\
 N&=&\max\{A_{2},A_{4}\}
 \end{eqnarray*}
Using (\ref{3.32}), (\ref{3.33}), then (\ref{888}) becomes
\begin{eqnarray*}
0\geq\frac{k-1}{2}\cdot uf\cdot\lambda_{1}-C
\end{eqnarray*}
\par
We now finish the proof of Case l+1 $\qquad l\leq k-2$.

\bigskip

{\it Remark}: Discussion of $N,C_{1},\cdots,C_{k-2}$.
\par
From $(\spadesuit 2)$, $N\geq A_{2}$. $A_{2}$ can be written in the explicit form:
\begin{equation} \label{3.34}
N\geq A_{2}=c(n)\frac{1}{\inf(uf)}
\end{equation}
\par
From $(\spadesuit 3)$, $N\geq A_{4}$. $A_{4}$ can be written as
\begin{equation} \label{3.35}
A_{4}=c_{1}(n)\cdot\frac{\max(\phi^{2})\cdot \max(f)}{\inf(uf)\cdot \inf(u)}
\end{equation}
Let
\begin{equation} \label{3.36}
N=\max\{A_{2},A_{4}\}=\max\{c(n)\frac{1}{\inf(uf)},c_{1}(n)\cdot\frac{\max(\phi^{2})\cdot \max(f)}{\inf(uf)\cdot \inf(u)}\}
\end{equation}
then N only depends on $n$, $|f_{0}|_{C^{0}}$, $\inf(\rho)$, $|\rho|_{C^{0}}$, $|\nabla\rho|_{C^{0}}$, $\inf(f_{0})$.
\par
Since we already have $C^{0}$, $C^{1}$ estimate for $\rho$, this means
\begin{eqnarray*}
N\sim n,k, \inf(f_{0}), ||f_{0}||_{C^{1}}
\end{eqnarray*}
where $f_{0}$ is the given function on $S^{n}$.
\par
For $C_{1}$ in the Case 2:
\par
at this case, $(\clubsuit 1)$ becomes
\begin{eqnarray}\label{88888}
0 &\geq & -c_{2}(n)\cdot \max\{\frac{1}{N},\frac{c_{3}(n)}{C_{1}}\}\frac{\phi^{2}f^{2}}{\sigma_{k}}\cdot\lambda_{1}
-\varepsilon_{1}\cdot \phi^{2}f^{2}\cdot\lambda_{1}\notag \\
& &+(k-1)uf\cdot\lambda_{1}-C+\frac{-\frac{c_{4}(n)}{\varepsilon_{1}}\sum_{i\geq 2}(\frac{h_{ii1}}{\lambda_{1}})^{2}
+\sum_{i\geq 2}\frac{2\sigma_{k}^{ii}}{\lambda_{1}-\lambda_{i}}h_{ii1}^{2}}{\lambda_{1}}
\end{eqnarray}

First choose
\begin{equation} \label{3.37}
C_{1}\geq c_{5}(n)\cdot\frac{\max(\phi^{2})\max(f^{2})}{\min(uf)^{2}}
\end{equation}
\begin{equation} \label{3.38}
\varepsilon_{1}=\frac{1}{4}\cdot\frac{\min(uf)}{\max(\phi^{2})\max(f^{2})}
\end{equation}
Since we already chose appropriate N, then (\ref{88888}) becomes
\begin{equation} \label{3.39}
0\geq \frac{1}{2}uf\cdot\lambda_{1}-C+\frac{-\frac{c_{4}(n)}{\varepsilon_{1}}\sum_{i\geq 2}(\frac{h_{ii1}}{\lambda_{1}})^{2}
+\sum_{i\geq 2}\frac{2\sigma_{k}^{ii}}{\lambda_{1}-\lambda_{i}}h_{ii1}^{2}}{\lambda_{1}}
\end{equation}
\par
All c(n) and $c_{i}(n),\quad 1\leq i\leq 5$ are constants only depending on n.
\par
Now again, let $C_{1}$ large enough, so that the last term of (\ref{3.39}) is bigger than 0.
From Lemma 7, $C_{1}$ only depends on n,k,$\frac{c_{4}(n)}{\varepsilon_{1}}.\qquad\qquad\qquad\qquad\qquad (**)$
\par
Combining (\ref{3.37}), (\ref{3.38}), $(**)$, $C_{1}$ is a fixed constant only depending on $n,k,\inf(f_{0}),||f_{0}||_{C^{1}}$.
\par
Now by induction and $(\spadesuit 1),(\spadesuit 2),(\spadesuit 3), (\spadesuit 4)$, $C_{l}$ only depends on
\begin{eqnarray*}
n,k,C_{1},\cdots,C_{l-1},\inf(f_{0}),||f_{0}||_{C^{1}}
\end{eqnarray*}
This implies $C_{l}$ only depends on $n,k,\inf(f_{0}),||f_{0}||_{C^{1}}$.
\par
Therefore, we have chosen $N, C_{1},\cdots, C_{k-2}$. All of them are large, fixed constants only depending on
$n$, $k$, $\inf(f_{0})$, $||f_{0}||_{C^{1}}$.

\medskip

Now we already proved Case 1, Case 2,$\cdots$, Case k-1, the only remaining case is Case k:
\begin{eqnarray*}
\lambda_{2}\geq\frac{1}{C_{1}}\lambda_{1}, \lambda_{3}\geq\frac{1}{C_{2}}\lambda_{1},\cdots, \lambda_{k-2}\geq\frac{1}{C_{k-3}}\lambda_{1}, \lambda_{k-1}\geq\frac{1}{C_{k-2}}\lambda_{1},
\end{eqnarray*}
 $C_{1},C_{2},\cdots, C_{k-2}$ have been chosen in Case 2, $\cdots$,Case $k-1$. By the previous remark, they only depend on $n$, $k$, $\inf(f_{0})$, $||f_{0}||_{C^{1}}$.
\par
We need another Lemma:
\begin{lemma} At this case, we have $\sigma_{k}^{nn}=\sigma_{k-1}(\lambda |n)\geq c_{\alpha}\cdot\lambda_{1}^{k-1},\qquad c_{\alpha}\sim C_{1},\cdots ,C_{k-2}$
\begin{proof} Suppose $\lambda_{1}>\lambda_{2}\geq\cdots\geq\lambda_{t}>0\geq \lambda_{t+1}\geq\cdots\geq\lambda_{n}$. As we knew, $t\geq k$,\\
Therefore,
\begin{eqnarray}\label{3.40}
\sigma_{k}^{nn}&=& \sigma_{k-1}(\lambda |n)\geq \lambda_{1}\cdots\lambda_{k-1}+\sum_{i_{1}<\cdots <i_{k-1}\quad i_{k-1}\geq t+1}
\lambda_{i_{1}}\cdots\lambda_{i_{k-1}}\notag \\
\sigma_{k}^{nn}&=&\sigma_{k-1}(\lambda |n)\geq \lambda_{1}\cdots\lambda_{k-1}-c(n)\cdot\lambda_{1}^{k-2}\cdot |\lambda_{n}|\notag\\
& \geq&  \frac{1}{C_{1}\cdots C_{k-2}}\lambda_{1}^{k-1}-c(n)\cdot\lambda_{1}^{k-2}\cdot |\lambda_{n}|
\end{eqnarray}
The first inequality is due to we could always assume $|\lambda_{i}|<\lambda_{1}$, otherwise, we would use Case 1 to handle.
\par
Since $C_{1},\cdots, C_{k-2}$ have been chosen,\\
if $|\lambda_{n}|\geq\frac{1}{2C_{1}\cdots C_{k-2}}\cdot\frac{1}{c(n)}\cdot\lambda_{1}$, we could use Case 1 to handle.\\
if $|\lambda_{n}|\leq\frac{1}{2C_{1}\cdots C_{k-2}}\cdot\frac{1}{c(n)}\cdot\lambda_{1}$, from (\ref{3.40}) we have
\begin{eqnarray*}
\sigma_{k}^{nn}\geq \frac{1}{2C_{1}\cdots C_{k-2}}\lambda_{1}^{k-1}
\end{eqnarray*}
let $c_{\alpha}= \frac{1}{2C_{1}\cdots C_{k-2}}$, we prove Lemma 8.
\end{proof}
\end{lemma}

\medskip

From the proof of Case 1 and choose of $\beta$, we have
\begin{eqnarray*}
-\frac{\sum_{i\neq j}\sigma_{k-2}(\lambda |ij)h_{ii1}h_{jj1}}{\lambda_{1}}&\geq &-C\lambda_{1}-C\qquad C\sim n, k, \inf(f_{0}), ||f_{0}||_{C^{1}}\\
\beta\phi^{'}-1&\geq & 1
\end{eqnarray*}
Therefore,
$(\clubsuit 2)$ becomes
\begin{eqnarray*}
0 &\geq & -C\lambda_{1}-C+\sum_{i=2}^{n}\sigma_{k}^{ii}\\
  &\geq & -C\lambda_{1}-C+\sigma_{k}^{nn}\\
  &\geq & -C\lambda_{1}-C+c_{\alpha}\cdot\lambda_{1}^{k-1}
\end{eqnarray*}
We certainly have our estimate.

\medskip

{\it Remark:}  We actually don't need to prove so many cases. In fact, if k is odd, Case 1, Case 2, $\cdots$, Case $[\frac{k}{2}]+1$ are totally enough for our proof. And if k is even, Case 1, Case 2, $\cdots$, Case $\frac{k}{2}$ are enough for the proof.
Cause if Case 2, $\cdots$, (Case $[\frac{k}{2}]+1$ or Case $\frac{k}{2}$) all fail, then the same method of Lemma 8 shows $\sigma_{k}^{nn}\geq c_{\alpha}\lambda_{1}$, by choosing $\beta$ large enough, we still have our result.

\medskip

There finishes the proof of $C^{2}$ estimate when no multiple roots for biggest eigenvalue.

\bigskip

We now handle the case with multiple roots for biggest eigenvalue. ($\mu>1$)

\medskip

In this case, $(\clubsuit 1)$ becomes
\begin{eqnarray*}
0 &\geq & \underbrace{-\frac{\sum_{i,j,m,t}\sigma_{k}^{ij,mt}h_{ij1}h_{mt1}}{\lambda_{1}}+\sum_{i}2\sigma_{k}^{ii}\frac{\sum_{l\geq \mu+1}\frac{1}{\lambda_{1}-\lambda_{l}}h_{1li}^{2}}{\lambda_{1}}}_{(A)}\underbrace{-\sum_{i}\sigma_{k}^{ii}\frac{h_{11i}^{2}}{\lambda_{1}^{2}}}_{(B)}\\
  & & +(k-1)uf\cdot h_{11}+(\beta\phi^{'}-1)\sum_{i\geq 1}\sigma_{k}^{ii}
+\sum_{i}\frac{a}{u-a}\sigma_{k}^{ii}h_{ii}^{2}+\sum_{i}\sigma_{k}^{ii}(\frac{u_{i}}{u-a})^{2}-C \quad (\clubsuit 3)
\end{eqnarray*}

\begin{eqnarray} \label{new}
(A)+(B) & \geq & \underbrace{\frac{\sum_{i\geq \mu+1}(2\sigma_{k-2}(\lambda |i1)h_{11i}^{2}+\frac{2\sigma_{k}^{11}}{\lambda_{1}-\lambda_{i}}h_{11i}^{2}-\frac{\sigma_{k}^{ii}}{\lambda_{1}}h_{11i}^{2})}{\lambda_{1}}}_{(C_{1})}\notag\\
& & \underbrace{+\frac{-\sum_{i\neq j}\sigma_{k-2}(\lambda |ij)h_{ii1}h_{jj1}+\sum_{i\geq \mu+1}\frac{2\sigma_{k}^{ii}}{\lambda_{1}-\lambda_{i}}h_{ii1}^{2}}{\lambda_{1}}}_{(C_{2})}
\underbrace{-\sum_{i=1}^{\mu}\sigma_{k}^{ii}(\frac{h_{11i}}{\lambda_{_{1}}})^{2}}_{(C_{3})}
\end{eqnarray}
\par
Same as before, we could simply prove Case 1. This means we could assume $|\lambda_{n}|<\lambda_{1}$. Then $(C_{1})$ can be handled like before:
\begin{equation} \label{3.50}
(C_{1})=\frac{\sum_{i\geq \mu +1}\frac{\lambda_{1}+\lambda_{i}}{(\lambda_{1}-\lambda_{i})\lambda_{1}}\sigma_{k}^{ii}h_{11i}^{2}}{\lambda_{1}}\geq 0
\end{equation}
\par
As for $C_{3}$, we use critical equation: when $1\leq i\leq \mu$
\begin{equation} \label{3.51}
-\sigma_{k}^{ii}(\frac{h_{11i}}{\lambda_{1}})^{2}+\frac{a}{u-a}\sigma_{k}^{ii}h_{ii}^{2}+\sigma_{k}^{ii}(\frac{u_{i}}{u-a})^{2}
\geq -C\sigma_{k}^{ii}+\frac{1}{2}\frac{a}{u-a}\sigma_{k}^{ii}h_{ii}^{2}
\end{equation}
Since when $1\leq i\leq \mu$, $\lambda_{i}=\lambda_{1}$, we have this term is greater than zero.
\par
Therefore, using (\ref{3.50}), (\ref{3.51}), $(\clubsuit 3)$ becomes
\begin{eqnarray*}
0 &\geq &\frac{-\sum_{i\neq j}\sigma_{k-2}(\lambda |ij)h_{ii1}h_{jj1}+\sum_{i\geq \mu+1}\frac{2\sigma_{k}^{ii}}{\lambda_{1}-\lambda_{i}}h_{ii1}^{2}}{\lambda_{1}}\\
  & &+(k-1)\cdot uf\cdot h_{11}+(\beta\phi^{'}-1)\sum_{i\geq \mu+1}\sigma_{k}^{ii}-C\quad\quad (\clubsuit 4)
\end{eqnarray*}
\par
Since we already have $\lambda_{1}=\lambda_{2}=\cdots=\lambda_{\mu}$,
we only need to start from Case $\mu+1$,

\medskip

Case $\mu+1$: $\lambda_{1}=\lambda_{2}=\cdots=\lambda_{\mu}$, $\lambda_{\mu+1}\leq\frac{1}{C_{\mu}}\lambda_{1}, \quad C_{\mu}$ is large enough which will be determined later.

\medskip

Case $\mu+2$: $\lambda_{1}=\lambda_{2}=\cdots=\lambda_{\mu}$, $\lambda_{\mu+1}\geq\frac{1}{C_{\mu}}\lambda_{1}$, $\lambda_{\mu+2}\leq\frac{1}{C_{\mu+1}}\lambda_{1}$
$\quad C_{\mu}$ has been decided in Case $\mu+1$.  $\quad C_{\mu+1}$ is large enough which will be determined later.

\medskip

$\cdots\cdots$

\medskip

$\emph{Case $k-1$:}$
$\lambda_{2}=\lambda_{1}$,$\cdots$, $\lambda_{\mu}=\lambda_{1}$, $\lambda_{\mu+1}\geq\frac{1}{C_{\mu}}\lambda_{1}$, $\lambda_{\mu+2}\geq\frac{1}{C_{\mu+1}}\lambda_{_{1}}$,$\cdots$,$\lambda_{k-2}\geq\frac{1}{C_{k-3}}\lambda_{1}$, $\lambda_{k-1}\leq\frac{1}{C_{k-2}}\lambda_{1}$. $C_{\mu},C_{\mu+1},\cdots, C_{k-3}$ have been chosen in Case $\mu+1$, $\cdots$,Case $k-2$. $C_{k-2}$ is a sufficiently large constant depends on n, k, $C_{\mu},C_{\mu+1},\cdots, C_{k-3}$, $\inf(f_{0})$, $||f_{0}||_{C^{1}}$.$\quad$ $C_{k-2}$ is to be chosen.

\medskip

$\emph{Case $k$:}$
 $\lambda_{2}=\lambda_{1}$,$\cdots$, $\lambda_{\mu}=\lambda_{1}$, $\lambda_{\mu+1}\geq\frac{1}{C_{\mu}}\lambda_{1}$, $\lambda_{\mu+2}\geq\frac{1}{C_{\mu+1}}\lambda_{1}$,$\cdots$,$\lambda_{k-2}\geq\frac{1}{C_{k-3}}\lambda_{1}$, $\lambda_{k-1}\geq\frac{1}{C_{k-2}}\lambda_{1}$. $C_{\mu},C_{\mu+1},\cdots, C_{k-2}$ have been chosen in Case $\mu+1$, $\cdots$, Case $k-1$.

\medskip

When $\mu\leq k-1$, using exactly the same proof as before, we could solve Case $\mu+1$, $\cdots$, Case k to get $C^{2}$ estimate.
When $n>\mu\geq k-1$, we simply use $(\clubsuit 3)$, $(\clubsuit 4)$ and the same methods in solving Lemma 8 and Case k to obtain our result.
When $\mu=n$, we have $\lambda_{1}=\lambda_{2}=\cdots =\lambda_{n}$. It's obvious we have $C^{2}$ bound at this case.
\par
We finish the proof of $C^{2}$ estimate.

\end{proof}

\medskip

Now we prove case $k=1,2$. $k=2$ has been solved by Spruck-Xiao \cite{spruckX} and $k=1$ is trival. But for completeness, we still give their proofs.
\par
Same as before, we have
\begin{eqnarray*}
0&\geq& \frac{-\sum_{i\neq j}\sigma_{k-2}(\lambda |ij)h_{ii1}h_{jj1}+\sum_{i\geq \mu+1}\frac{2\sigma_{k}^{ii}}{\lambda_{1}-\lambda_{i}}h_{ii1}^{2}}{\lambda_{1}}\\
& &+(k-1)\cdot uf\cdot h_{11}+(\beta\phi^{'}-1)\sum_{i\geq \mu+1}\sigma_{k}^{ii}+
\frac{1}{4}\frac{a}{u-a}\sum_{i}\sigma_{k}^{ii}h_{ii}^{2}-C\quad\quad  (\Re)
\end{eqnarray*}
\par
$\mathbf{k=2}$: Since we have $\sum_{i\geq \mu+1}\sigma_{2}^{ii}\geq \lambda_{1}$.  And like before
$\frac{-\sum_{i\neq j}h_{ii1}h_{jj1}}{\lambda_{1}}\geq -C(1+\lambda_{1})$, $C\sim n$, $k$, $\inf(f_{0})$, $||f_{0}||_{C^{1}}$, so now $(\Re)$ becomes
\begin{eqnarray*}
0\geq -C\lambda_{1}+(\beta\phi^{'}-1)\lambda_{1}-C
\end{eqnarray*}
By choosing $\beta$ large enough, depending on $n$, $k$, $\inf(f_{0})$, $||f_{0}||_{C^{1}}$, we have our estimate.
\par
$\mathbf{k=1}$: At this case, $\sigma_{1}^{ii}=1$, therefore, from $(\Re)$ or $(\clubsuit 1)$, we have
\begin{eqnarray*}
0\geq \frac{1}{4}\frac{a}{u-a}h_{11}^{2}-C
\end{eqnarray*}
we also have the estimate.

\bigskip

$\mathbf{C^{2}}$  $\mathbf{bound}$ $\mathbf{for}$ $\mathbf{\rho_{ij}}$.
\par
Since we already proved curvature bound $\lambda_{1}\leq C$,
therefore, we also have
\begin{eqnarray*}
|\tilde{h_{j}^{i}}|=|\frac{1}{\phi^{2}\sqrt{\phi^{2}+|\nabla \rho|^{2}}}(\delta^{ik}-\frac{\rho_{i}\rho_{k}}{\tilde{\omega}(\tilde{\omega}+\phi)})(-\phi\rho_{kl}+2\phi^{'}\rho_{k}\rho_{l}+\phi^{2}\phi^{'}\delta_{kl})(\delta^{lj}-
\frac{\rho_{l}\rho_{j}}{\tilde{\omega}(\tilde{\omega}+\phi)})|
\end{eqnarray*}
is bounded, where $\tilde{\omega}=\sqrt{\phi^{2}+|\nabla \rho|^{2}}$.
\par
Let $G=\{\delta^{ik}-\frac{\rho_{i}\rho_{k}}{\tilde{\omega}(\tilde{\omega}+\phi)}\}$, then
$G^{-1}=\{\delta^{ik}+\frac{\rho_{i}\rho_{k}}{\phi(\tilde{\omega}+\phi)}\}$.
Since
\begin{eqnarray*}
\{-\phi\rho_{kl}+2\phi^{'}\rho_{k}\rho_{l}+\phi^{2}\phi^{'}\delta_{kl}\}=\phi^{2}\tilde{\omega}
\cdot G^{-1}\cdot \{\tilde{h_{j}^{i}}\}\cdot G^{-1}
\end{eqnarray*}
This implies
\begin{eqnarray*}
|-\phi\rho_{kl}+2\phi^{'}\rho_{k}\rho_{l}+\phi^{2}\phi^{'}\delta_{kl}|\leq C
\end{eqnarray*}
and hence
\begin{eqnarray*}
|\rho_{kl}|\leq C\qquad \forall k,l
\end{eqnarray*}

\section{Existence and uniqueness of prescribed curvature measure problem}

We now use continuous method to prove main theorem.
\par
\begin{proof}: For any positive function $f\in C^{2}(S^{n})$. For $0\leq t\leq 1$, set
\begin{eqnarray*}
f_{t}(x)=((1-t)+tf(x))^{\frac{1}{k}}
\end{eqnarray*}
Consider the following family of equations for $0\leq t\leq 1$
\begin{equation} \label{6.1}
(\phi^{n-1}\cdot\sqrt{\phi^{2}+|\nabla\rho|^{2}})^{\frac{1}{k}}\cdot \sigma_{k}^{\frac{1}{k}}(\kappa_{1},\cdots,\kappa_{n})
=f_{t}(x)
\end{equation}
Define: $I=\{t\in [0,1]:$ such that (\ref{6.1}) is solvable $\}$
\par
$\mathbf{(1)}$ Firstly, we could always find some constant $\rho_{0}$, such that
\begin{eqnarray*}
C_{n}^{k}\cdot \phi(\rho_{0})^{n-k}(\phi^{'})(\rho_{0})^{k}=1\qquad k<n
\end{eqnarray*}
This implies $\rho\equiv \rho_{0}$ is a solution when t=0. Therefore, I is nonempty.

\par
$\mathbf{(2)}$ $\quad$ Openness
\par
Like before, (\ref{6.1}) is equivalent as
\begin{equation} \label{6.2}
\frac{\phi^{n-k}}{\omega^{k-1}}\sigma_{k}(b_{j}^{i})=f
\end{equation}
The linearized operator at $\gamma$ is
\begin{equation} \label{6.3}
L_{1}(v)=\frac{\phi^{n-k}}{\omega^{k-1}}\cdot\widetilde{F^{st}}\cdot v_{st}+\sum_{l}b_{l}v_{l}-cv
\end{equation}
Here $\widetilde{F^{st}}$ was defined in the proof of $C^{1}$ estimate, $b_{l}$ is a function of $\gamma,\nabla\gamma,
\gamma_{ij}$, independent of v and
\begin{equation} \label{6.4}
c=(n-k)\frac{\phi^{n-k}\phi^{'}}{\omega^{k-1}}\cdot\sigma_{k}+\frac{\phi^{n-k+2}}{\omega^{k-1}}
\sum_{s,t}(\widetilde{F^{st}}\gamma_{s}\gamma_{t}+\widetilde{F^{st}}\delta_{st})
\end{equation}
Apparently, c is a positive number.
\par
Actually, we can also directly work with $\rho$:
\begin{eqnarray*}
\phi^{n-1}\sqrt{\phi^{2}+|\nabla \rho|^{2}}\sigma_{k}(\tilde{h_{j}^{i}})=f
\end{eqnarray*}
Then the linearized operator at $\rho$ is
\begin{equation} \label{6.5}
L_{2}(v)=\frac{\phi^{n-k}}{\tilde{\omega}^{k-1}}\cdot\hat{F}^{st}\cdot (\frac{v}{\phi})_{st}+
\sum_{l}\hat{b}_{l}(\frac{v}{\phi})_{l}-\hat{c}\frac{v}{\phi}
\end{equation}
Here $\tilde{\omega}=\sqrt{1+\frac{|\nabla\rho|^{2}}{\phi^{2}}}$,
\begin{eqnarray*}
\hat{F}^{st}=\sum_{i,j}\frac{\partial\sigma_{k}(\hat{h_{j}^{i}})}{\partial\hat{h_{j}^{i}}}
(\delta^{is}-\frac{\frac{\rho_{i}}{\phi}\frac{\rho_{s}}{\phi}}{\tilde{\omega}(\tilde{\omega}+1)})
(\delta^{tj}-\frac{\frac{\rho_{t}}{\phi}\frac{\rho_{j}}{\phi}}{\tilde{\omega}(\tilde{\omega}+1)})
\end{eqnarray*}
and $\hat{h_{j}^{i}}=\sqrt{\phi^{2}+|\nabla\rho|^{2}}\cdot\tilde{h_{j}^{i}}$. $\{\hat{F}^{st}\}$ is positive definite. $\hat{b}_{l}$ is a function of $\rho,\nabla\rho,
\rho_{ij}$, independent of v and
\begin{equation} \label{6.6}
\hat{c}=(n-k)\frac{\phi^{n-k}\phi^{'}}{\tilde{\omega}^{k-1}}\cdot\sigma_{k}+\frac{\phi^{n-k}}{\tilde{\omega}^{k-1}}
\sum_{s,t}(\hat{F}^{st}\rho_{s}\rho_{t}+\phi^{2}\hat{F}^{st}\delta_{st})
\end{equation}
Also, $\hat{c}$ is a positive number.

\par
Both linearized operators satisfy if $ L_{i}(v_{1})=L_{i}(v_{2}),\quad i=1,2$, then we have $v_{1}=v_{2}$. Then using inverse function theorem, we prove the openness, i.e. I is open.
\par
$\mathbf{(3)}$ $\quad$ Closeness
\par
Using Lemma 3 and Lemma 4 and the fact we already obtain $C^{2}$ estimate for $\rho$, we have equation (\ref{6.1})
is uniformly elliptic and concave. We apply the Evans-Krylov theorem and Schauder theorem to obtain
\begin{eqnarray*}
||\rho||_{C^{3,\alpha}(S^{n})}\leq C
\end{eqnarray*}
$C\sim n,k, \inf(f), ||f||_{C^{2}}$ and $\alpha$.
\par
This proves the closeness. i.e. I is close.
Combining ($\mathbf{1}$),($\mathbf{2}$),($\mathbf{3}$), we have $1\in I$. This finishes the proof of existence of prescribed curvature measure problem.
\end{proof}

\medskip

Proof of uniqueness:
\par
\begin{proof} Suppose $\gamma_{1}$,$\gamma_{2}$ both satisfy equation
\begin{eqnarray*}
\frac{\phi^{n-k}}{\omega^{k-1}}\sigma_{k}(b_{j}^{i})=f
\end{eqnarray*}

where $\omega=\sqrt{1+|\nabla \gamma|^{2}}$ and
\begin{eqnarray*}
b_{j}^{i}=(\delta^{ik}-\frac{\gamma_{i}\gamma_{k}}{\omega(\omega+1)})
(-\gamma_{kl}+\phi^{'}\gamma_{k}\gamma_{l}+\phi^{'}\delta_{kl})(\delta^{lj}-\frac{\gamma_{l}\gamma_{j}}{\omega(\omega+1)})
\end{eqnarray*}
\par
Let $\gamma(t)=(1-t)\gamma_{1}+t\gamma_{2}$, $\rho(t)$ be the relevant function with respect to $\gamma(t)$
satisfying $\frac{d\gamma}{d\rho}=\frac{1}{\phi}$. Denote $\rho_{2}=\rho(1),\rho_{1}=\rho(0)$ relate to
$\gamma_{2},\gamma_{1}$ respectly. Define
\begin{eqnarray*}
\breve{b_{j}^{i}}=(\delta^{ik}-\frac{(\gamma_{1})_{i}(\gamma_{1})_{k}}{\omega_{1}(\omega_{1}+1)})
(-(\gamma_{1})_{kl}+\phi^{'}(\rho_{1})(\gamma_{1})_{k}(\gamma_{1})_{l}+\phi^{'}(\rho_{1})\delta_{kl})
(\delta^{lj}-\frac{(\gamma_{1})_{l}(\gamma_{1})_{j}}{\omega_{1}(\omega_{1}+1)})
\end{eqnarray*}
\begin{eqnarray*}
\tilde{b_{j}^{i}}=(\delta^{ik}-\frac{(\gamma_{2})_{i}(\gamma_{2})_{k}}{\omega_{2}(\omega_{2}+1)})
(-(\gamma_{2})_{kl}+\phi^{'}(\rho_{2})(\gamma_{2})_{k}(\gamma_{2})_{l}+\phi^{'}(\rho_{2})\delta_{kl})
(\delta^{lj}-\frac{(\gamma_{2})_{l}(\gamma_{2})_{j}}{\omega_{2}(\omega_{2}+1)})
\end{eqnarray*}
here $\omega_{i}=\sqrt{1+|\nabla \gamma_{i}|^{2}}\qquad i=1,2$. Let
\begin{eqnarray*}
\hat{b_{j}^{i}(t)}=t\tilde{b_{j}^{i}}+(1-t)\breve{b_{j}^{i}}
\end{eqnarray*}
then
\begin{eqnarray*}
\{\hat{b_{j}^{i}(t)}\}=t\{\tilde{b_{j}^{i}}\}+(1-t)\{\breve{b_{j}^{i}}\}
\end{eqnarray*}
this means $\{\hat{b_{j}^{i}(t)}\}\in\Gamma_{k}\qquad \forall t\in [0,1]$. Define function
\begin{eqnarray*}
G(t)=\frac{\phi^{n-k}(\rho(t))}{\omega_{t+1}^{k-1}}\sigma_{k}(\hat{b_{j}^{i}(t)})
\end{eqnarray*}
Where $\omega_{t+1}=\sqrt{1+|\nabla \gamma(t)|^{2}}$, then we have
\begin{equation} \label{6.23}
0=G(1)-G(0)=\int_{0}^{1}G^{'}(s)ds
\end{equation}
A simple calculation shows
\begin{equation} \label{6.24}
G^{'}(s)=(n-k)\frac{\phi^{n-k}\phi^{'}}{\omega_{s+1}^{k-1}}\sigma_{k}\cdot(\gamma_{2}-\gamma_{1})+
\frac{\phi^{n-k}}{\omega_{s+1}^{k-1}}\sum_{i,j}\sigma_{k}^{ij}(\tilde{b_{j}^{i}}-\breve{b_{j}^{i}})
+\sum b_{l}(\gamma_{2}-\gamma_{1})_{l}
\end{equation}
here $\sigma_{k}^{\alpha\beta}=\frac{\partial\sigma_{k}}{\partial\hat{b_{\beta}^{\alpha}(s)}}$.
\par
Since $(\delta^{ik}-\frac{(\gamma_{1})_{i}(\gamma_{1})_{k}}{\omega_{1}(\omega_{1}+1)})
-(\delta^{ik}-\frac{(\gamma_{2})_{i}(\gamma_{2})_{k}}{\omega_{2}(\omega_{2}+1)})=\sum_{l}c_{l}(\gamma_{2}-\gamma_{1})_{l}$,
let
\begin{eqnarray*}
\check{F^{kl}}=\sum_{i,j}\sigma_{k}^{ij}(\delta^{ik}-\frac{(\gamma_{1})_{i}(\gamma_{1})_{k}}{\omega_{1}(\omega_{1}+1)})
(\delta^{lj}-\frac{(\gamma_{1})_{l}(\gamma_{1})_{j}}{\omega_{1}(\omega_{1}+1)})
\end{eqnarray*}
Then we obtain
\begin{eqnarray*}
& &\sum_{ij}\sigma_{k}^{ij}(\tilde{b_{j}^{i}}-\breve{b_{j}^{i}})=[\sum_{ij}\sigma_{k}^{ij}
(\delta^{ik}-\frac{(\gamma_{1})_{i}(\gamma_{1})_{k}}{\omega_{1}(\omega_{1}+1)})
(\delta^{lj}-\frac{(\gamma_{1})_{l}(\gamma_{1})_{j}}{\omega_{1}(\omega_{1}+1)})\\
& &\cdot(-(\gamma_{2})_{kl}+\phi^{'}(\rho_{2})(\gamma_{2})_{k}(\gamma_{2})_{l}+\phi^{'}(\rho_{2})\delta_{kl}
+(\gamma_{1})_{kl}-\phi^{'}(\rho_{1})(\gamma_{1})_{k}(\gamma_{1})_{l}-\phi^{'}(\rho_{1})\delta_{kl})]+\sum_{l}d_{l}(\gamma_{2}-\gamma_{1})_{l}\\
& &=\check{F^{kl}}(-(\gamma_{2}-\gamma_{1})_{kl}+(\phi^{'}(\rho_{2})-\phi^{'}(\rho_{1}))((\gamma_{1})_{k}(\gamma_{1})_{l}+\delta_{kl}))
+\sum_{l}e_{l}(\gamma_{2}-\gamma_{1})_{l}
\end{eqnarray*}
Therefore, (\ref{6.24}) becomes
\begin{eqnarray} \label{6.25}
G^{'}(s)&=&-\frac{\phi^{n-k}}{\omega_{s+1}^{k-1}}\check{F^{kl}}(\gamma_{2}-\gamma_{1})_{kl}
+\frac{\phi^{n-k}}{\omega_{s+1}^{k-1}}\check{F^{kl}}((\gamma_{1})_{k}(\gamma_{1})_{l}+\delta_{kl})\cdot\phi^{''}(\tilde{\rho})(\rho_{2}-\rho_{1})\notag\\
& &+(n-k)\frac{\phi^{n-k}\phi^{'}}{\omega_{s+1}^{k-1}}\sigma_{k}(\gamma_{2}-\gamma_{1})+\sum_{l}g_{l}(\gamma_{2}-\gamma_{1})_{l}
\end{eqnarray}
Where $\tilde{\rho}$ is a function whose value is between $\rho_{1}$ and $\rho_{2}$. i.e. if $\rho_{1}\leq\rho_{2}$, we have
$\rho_{1}\leq\tilde{\rho}\leq\rho_{2}$, otherwise $\rho_{2}\leq\tilde{\rho}\leq\rho_{1}$. This implies $\phi^{''}(\tilde{\rho})>0$.
Besides, $b_{l},c_{l},d_{l},e_{l},g_{l}$ are certain functions.
\par
Now, using (\ref{6.23}), we obtain
\begin{eqnarray} \label{6.26}
0&=&-(\int_{0}^{1}\frac{\phi^{n-k}}{\omega_{s+1}^{k-1}}\check{F^{kl}}ds)(\gamma_{2}-\gamma_{1})_{kl}
+(\int_{0}^{1}\frac{\phi^{n-k}}{\omega_{s+1}^{k-1}}\check{F^{kl}}ds)((\gamma_{1})_{k}(\gamma_{1})_{l}
+\delta_{kl})\cdot\phi^{''}(\tilde{\rho})(\rho_{2}-\rho_{1})\notag\\
& &+(\int_{0}^{1}(n-k)\frac{\phi^{n-k}\phi^{'}}{\omega_{s+1}^{k-1}}\sigma_{k}ds)
(\gamma_{2}-\gamma_{1})+\sum_{l}(\int_{0}^{1}g_{l}ds)(\gamma_{2}-\gamma_{1})_{l}
\end{eqnarray}
Now, if $\gamma_{2}-\gamma_{1}$ could reach its positive maximum point, say $x_{0}$, then at $x_{0}$, $\gamma_{2}>\gamma_{1}$,$\rho_{2}>\rho_{1}$,
and $\{\check{F^{kl}}\}$ is positive definite. Then the right side of (\ref{6.26}) is bigger than 0, which is impossible.
\par
Similarly, $\gamma_{2}-\gamma_{1}$ couldn't reach its negative minimal point. This implies $\gamma_{2}-\gamma_{1}\equiv 0$, i.e
$\gamma_{2}\equiv\gamma_{1}$. We prove the uniqueness.
\end{proof}

\bigskip

$\mathbf{Existence}$ $\mathbf{of}$ $\mathbf{prescribed}$ $\mathbf{0-th}$ $\mathbf{curvature}$ $\mathbf{measure}$.

\medskip

At this case, we obtain the existence and uniqueness of convex body with prescribed 0-th curvature measures  under extra condition $\inf_{S^{n}}(f)>1$. Here $f$ is the given function defined on $S^{n}$. To be specific, we prove the following theorem:

\begin{theorem} \label{Gauss Curvature}
 Suppose $f\in C^{2}(S^{n})$ and $\inf_{S^{n}}(f)>1$. Then
there exists a unique convex hypersurface $M\in C^{3,\alpha}$. Such that it satisfies
\begin{equation} \label{Gauss equation}
\sigma_{n}(\kappa_{1},\cdots,\kappa_{n})=\frac{f}{\phi(\rho)^{n-1}\sqrt{\phi(\rho)^{2}+|\nabla \rho|^{2}}}
\end{equation}
\par
Moreover, there is a constant C only depending on n,$||f||_{C^{2}}$, $\inf(f)$, and $\alpha$, such that
\begin{eqnarray*}
||\rho||_{C^{3,\alpha}}\leq C
\end{eqnarray*}
\end{theorem}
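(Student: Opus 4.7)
The plan is to follow exactly the same continuity-method template used for Theorem \ref{T1}, but built around a deformation family that keeps the hypothesis $\inf f > 1$ preserved throughout. Specifically, pick a constant $c > 1$ and set $f_t = (1-t)c + t f$, so that $\inf_{S^n} f_t > 1$ for every $t \in [0,1]$. At $t=0$, the equation $\sigma_n(\kappa) = c/(\phi^{n-1}\sqrt{\phi^2+|\nabla\rho|^2})$ admits the explicit constant solution $\rho_0 = \cosh^{-1}(c^{1/n})$, which exists precisely because $c > 1$. Openness uses that the linearization is a second-order uniformly elliptic operator with strictly positive zeroth-order term (as in the openness step of the existence proof above, with $k$ replaced by $n$), and closedness will follow once we have a uniform $C^{3,\alpha}$ estimate independent of $t$.

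The first estimate is the crucial one and is where $\inf f > 1$ enters. At a maximum point of $\rho$ one has $\nabla\rho=0$ and $\{\rho_{ij}\} \leq 0$, so $\tilde h_{j}^{i} \geq (\phi'/\phi)\delta_{j}^{i}$ as matrices and therefore $\sigma_n(\tilde h) \geq (\phi'/\phi)^n$; plugging into the equation gives $f \geq (\phi')^n = \cosh^n(\rho_{\max})$, hence an upper $C^0$ bound depending only on $\|f\|_\infty$. At a minimum point the inequality reverses and one gets $f \leq \cosh^n(\rho_{\min})$, so $\rho_{\min} \geq \cosh^{-1}((\inf f)^{1/n}) > 0$, which is strictly positive exactly because $\inf f > 1$. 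For the $C^1$ estimate the argument of Theorem \ref{T3} degenerates (the crucial coefficient $(n-k)\phi'|\nabla\gamma|^2 f$ vanishes when $k=n$), so I would instead exploit convexity of $M$: the solution $\rho$ determines a geodesically convex body in $\mathbb{H}^{n+1}$ pinched between two geodesic balls of radii fixed by the $C^0$ bound, and for such a convex body the radial Lipschitz constant (equivalently $|\nabla\gamma|$) is controlled by the ratio of these radii. For the $C^2$ estimate the proof of Theorem \ref{C2} applies verbatim to the admissible range $k=n$, since the case analysis there covers $1 \leq k \leq n$ and relies only on $\lambda \in \Gamma_k$; in fact convexity of $\kappa$ simplifies several steps (e.g.\ Case~$k$ is reached immediately and Lemma~8 gives $\sigma_n^{nn} \geq c_\alpha \lambda_1^{n-1}$ directly).

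Once $\|\rho\|_{C^2}$ is bounded and $\rho$ is convex, equation (\ref{Gauss equation}) is uniformly elliptic and concave in $\{\rho_{ij}\}$ (Lemmas~\ref{lemma3}, \ref{lemma4}), so Evans--Krylov and Schauder deliver the $C^{3,\alpha}$ bound and close the continuity argument. Uniqueness is obtained by rerunning the maximum-principle argument from the uniqueness proof above: interpolating between two convex solutions $\gamma_1,\gamma_2$ along $\gamma(t) = (1-t)\gamma_1 + t\gamma_2$ and integrating $G'(s)$ yields an identity of the form (\ref{6.26}), and at a positive maximum of $\gamma_2 - \gamma_1$ all four terms on the right have strictly positive sign (the coefficient matrix is positive definite, $\phi''(\tilde\rho)(\rho_2-\rho_1) > 0$, and the zeroth-order term with $(n-k)$ is still nonnegative—in fact harmless since at $k=n$ it drops out but the other positive terms persist), yielding a contradiction; similarly at a negative minimum. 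The main obstacle in this program is the $C^1$ estimate: the direct maximum-principle test function that worked for $k < n$ no longer produces a closing inequality, so one must exploit convexity of $M$ and the quantitative lower bound on $\rho$ furnished by $\inf f > 1$ to control $|\nabla\rho|$ geometrically rather than analytically.
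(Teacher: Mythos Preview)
Your overall strategy matches the paper's: continuity method with a deformation preserving $\inf f_t>1$ (the paper uses $f_t=(2(1-t)+tf)^{1/n}$, but your linear interpolant works equally well), $C^0$ via max/min of $\rho$, $C^2$ from Theorem~\ref{C2}, Evans--Krylov plus Schauder for regularity, and uniqueness from (\ref{6.26}) with the $(n-k)$-term harmlessly dropping out. The one genuine divergence is the $C^1$ estimate.

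You propose a global geometric argument: $\kappa\in\Gamma_n$ forces the enclosed domain to be geodesically convex, and a convex body pinched between geodesic balls $B(0,c_0)\subset K\subset B(0,c_1)$ has its radial Lipschitz constant controlled by $c_0,c_1$. This is correct in principle, but it imports two outside facts---a Hadamard-type theorem in $\mathbb{H}^{n+1}$ and a hyperbolic analogue of the Euclidean support-function bound---that need to be stated and justified. The paper instead gives a two-line pointwise computation: introduce $\tilde\gamma$ with $d\tilde\gamma/d\rho=1/\phi^3$ (note the cube, not $1/\phi$ as in the $k<n$ case), and at the maximum of $|\nabla\tilde\gamma|^2$ compute in a frame with $\tilde\gamma_1=|\nabla\tilde\gamma|$, $\tilde\gamma_{1\alpha}=0$, to find
\[
h_1^1=\frac{\phi'(1-\phi^4|\nabla\tilde\gamma|^2)}{\phi\,(1+\phi^4|\nabla\tilde\gamma|^2)^{3/2}}.
\]
Positivity of $h_1^1$ then forces $|\nabla\tilde\gamma|<1/\phi^2$ immediately. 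Both routes rest on $\kappa_i>0$; the paper's is a self-contained local maximum-principle calculation, while yours trades that for global convex-geometry machinery but would transfer more naturally to settings with weaker regularity or non-smooth convexity.
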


\begin{proof}
Repeating the proof of Theorem(\ref{T2}), we obtain $C^{0}$ estimate with extra condition $\inf_{S^{n}}(f)>1$:
 \begin{eqnarray*}
c_{0}\leq \min(\rho)\leq \max(\rho)\leq c_{1}
\end{eqnarray*}
$c_{0}$, $c_{1}\sim \inf(f)$, $|f|_{C^{0}}$, $n$.
\par
Now we proof $C^{1}$ estimate. We first introduce a new variable $\tilde{\gamma}$ satisfying
\begin{eqnarray*}
\frac{d\tilde{\gamma}}{d\rho}=\frac{1}{\phi^{3}}
\end{eqnarray*}
Then we have
\begin{eqnarray*}
\rho_{i} &= &\phi^{3}\tilde{\gamma}_{i}\\
\rho_{ij} &= & \phi^{3}\tilde{\gamma}_{ij}+3\phi^{'}\phi^{5}\tilde{\gamma}_{i}\tilde{\gamma}_{j}
\end{eqnarray*}
\par
Assume $|\nabla\tilde{\gamma}|^{2}$ obtains its maximum at $x_{0}$. Choose normal frame $\{e_{1},\cdots,e_{n}\}$ at $x_{0}$, satisfying
$\tilde{\gamma}_{1}=|\nabla\tilde{\gamma}|, \tilde{\gamma}_{i}=0$, $\forall i\geq 2$. Rotate $\{e_{2},\cdots,e_{n}\}$, such that
$\{\tilde{\gamma}_{ij}\}$ is diagonal at $x_{0}$, $n\geq i,j\geq 2$. At $x_{0}$,
\begin{eqnarray*}
(|\nabla \tilde{\gamma}|^{2})_{\alpha}=\sum_{k}2\tilde{\gamma}_{k}\tilde{\gamma}_{k\alpha}
=2\tilde{\gamma}_{1}\tilde{\gamma}_{1\alpha}=0
\end{eqnarray*}
This implies $\tilde{\gamma}_{1\alpha}=0$, $\forall \alpha$. Hence, at $x_{0}$,
 \begin{eqnarray*}
h_{j}^{i} &= &\frac{1}{\phi^{2}\sqrt{\phi^{2}+|\nabla \rho|^{2}}}(\delta^{ik}-\frac{\rho_{i}\rho_{k}}{\phi^{2}+|\nabla \rho|^{2}})(-\phi\rho_{jk}+2\phi^{'}\rho_{k}\rho_{j}+\phi^{2}\phi^{'}\delta_{kj})\\
          &= &\frac{1}{\phi\sqrt{1+\phi^{4}|\nabla \tilde{\gamma}|^{2}}}(\delta^{ik}-\phi^{4}\frac{\tilde{\gamma}_{i}\tilde{\gamma}_{k}}{1+\phi^{4}|\nabla\tilde{\gamma}|^{2}})
          (-\phi^{2}\tilde{\gamma}_{jk}-\phi^{'}\phi^{4}\tilde{\gamma}_{k}\tilde{\gamma}_{j}+\phi^{'}\delta_{kj})\\
          &= & \frac{1}{\phi\sqrt{1+\phi^{4}|\nabla \tilde{\gamma}|^{2}}}(-\phi^{2}\tilde{\gamma}_{ij}+\phi^{'}\delta_{ij}
          -\frac{2}{1+\phi^{4}|\nabla\tilde{\gamma}|^{2}}\phi^{'}\phi^{4}\tilde{\gamma}_{i}\tilde{\gamma}_{j})
\end{eqnarray*}
Therefore
\begin{eqnarray*}
h_{1}^{1} &= &\frac{1}{\phi\sqrt{1+\phi^{4}|\nabla \tilde{\gamma}|^{2}}}\cdot\frac{\phi^{'}-\phi^{'}\phi^{4}|\nabla\tilde{\gamma}|^{2}}
{1+\phi^{4}|\nabla \tilde{\gamma}|^{2}}\\
h_{i}^{i} &= &\frac{1}{\phi\sqrt{1+\phi^{4}|\nabla \tilde{\gamma}|^{2}}}(-\phi^{2}\tilde{\gamma}_{ii}+\phi^{'})\quad \forall i\geq 2
\end{eqnarray*}
Besides, $h_{1}^{i}=h_{i}^{1}=0$, $\forall i\geq 2$ and $h_{i}^{j}=0$, $\forall i\neq j\geq 2$.
\par
Since the eigenvalues of $\{h_{j}^{i}\}$ are positve, we have $h_{1}^{1}>0$. We then obtain
\begin{eqnarray*}
\frac{1}{\phi^{2}}>|\nabla\tilde{\gamma}|
\end{eqnarray*}
We proof $C^{1}$ estimate.
\par
$C^{2}$ estimate follows from Theorem \ref{C2}. As for the existence, define
\begin{eqnarray*}
f_{t}(x)=(2(1-t)+tf(x))^{\frac{1}{n}}
\end{eqnarray*}
$f_{t}(x)$ satisfies $\inf_{S^{n}}f_{t}>1$, when $0\leq t\leq 1$.
\par
Consider the following family of equations for $0\leq t\leq 1$
\begin{equation} \label{Gaussex}
(\phi^{n-1}\cdot\sqrt{\phi^{2}+|\nabla\rho|^{2}})^{\frac{1}{n}}\cdot \sigma_{n}^{\frac{1}{n}}(\kappa_{1},\cdots,\kappa_{n})
=f_{t}(x)
\end{equation}
\par
Define: $\tilde{I}=\{t\in [0,1]:$ such that (\ref{Gaussex}) is solvable $\}$
\par
We could always find some constant $\rho_{0}$, such that
\begin{eqnarray*}
 (\phi^{'})(\rho_{0})^{n}=2
\end{eqnarray*}
This implies $\rho\equiv \rho_{0}$ is a solution when t=0. Therefore, $\tilde{I}$ is nonempty. Openness follows from
(\ref{6.3}),(\ref{6.4}),(\ref{6.5}),(\ref{6.6}). Using $C^{2}$ estimate, Lemma \ref{lemma3} and Lemma \ref{lemma4},
we can see equation (\ref{Gaussex})
is uniformly elliptic and concave. We apply the Evans-Krylov theorem and Schauder theorem to obtain the regularity and closeness.
Therefore we prove the existence. The uniqueness follows from (\ref{6.26}).

\end{proof}


\begin{thebibliography}{}


\bibitem{ex1} A.D.ALEXANDROV, {\em Zur theorie der gemishchten volumina von knovexen korpern, 2},
Math.USSR-Sb.2(1937),1205-1238.(1927)

\bibitem{1} A.D.ALEXANDROV, {\em Existence and uniqueness of a convex surface with a given integral curvature},
C.R.(Doklady) Acad.Sci.URSS(N.S)
35 (1942), 131-134. MR 0007625.

\bibitem{2} CARL B. ALLENDOERFER, {\em Steiner's formulae on a general $S^{n+1}$}. Bull.Amer.Math.Soc.,
54:128-135,1948.

\bibitem{ex3} C.BERG, {\em Corps convexes et potentiels sph$\acute{e}$riques}, Mat.-Fys.Medd.Danske Vid. Selsk. 37(1969), no.6,64 pp
. MR 0254789 (1927)

\bibitem{brendle} SIMON BRENDLE, KYEONGSU CHOI and PANAGIOTA DASKALOPOULOS, {\em Asymptotic behavior of flows by
powers of the Gaussian curvature}, Acta Math, 219 (2017), 1-16.

\bibitem{3} L.CAFFARELLI,L.NIRENBERG,and J.SPRUCK, {\em The Dirichlet problem for nonlinear second order elliptic
equations, III: Functions of the eigenvalues of the Hessian}, Acta Math. 155 (1985), 261-301.
MR 0806416

\bibitem{4} L.CAFFARELLI,L.NIRENBERG,and J.SPRUCK, {\em "Nonlinear second order elliptic equations IV:
Starshaped compact Weingarten hypersurfaces" in Current Topics in Partial Differential Equations},
Kinokuniya, Tokyo, 1986, 1-26. MR 1112140

\bibitem{ex6} S.Y.CHENG, S.T.YAU, {\em On the regularity of the solution of the n-dimensional Minkowski problem},
Comm.Pure Appl. Math.29(1976),495-516.MR 0423267 (1927)

\bibitem{5} H.FEDERER, {\em Curvature measures}, Trans.Amer.Math.Soc. 93 (1959), 418-491. MR 0110078

\bibitem{ex8} W.J.FIREY, {\em Christoffels's problem for general convex bodies}, Mathematika 15(1968), 7-21.
MR 0230259(1927)

\bibitem{6} P.GUAN, {\em Curvature measures, isoperimetric type inequalities and fully nonlinear PDEs},
Fully Nonlinear PDEs in Real and Complex Geometry and Optics, 47-94, 2014

\bibitem{Junfang} P.GUAN, J.LI, {\em A mean curvature flow in space form}, International Mathematics Research Notices,
Vol.2015, NO.13,(2015) 4716-4740

\bibitem{7} P.GUAN, JUNFANG LI and YANYAN LI, {\em Hypersurfaces of prescribed curvature measure}
Duke Math.J. Vol.161.No.10(2012), 1927-1942

\bibitem{8} P.GUAN, Y. LI, {\em $C^{1,1}$ estimates for solutions of a problem of Alexandrov}, Comm.
Pure Appl. Math. 50(1997), 789-811. MR 1454174

\bibitem{9} P.GUAN, C.LIN, and XINAN MA, {\em The existence of convex body with prescribed curvature measures},
Int.Math.Res.Not.IMRN 2009, no.11,1947-1975. MR 2507106

\bibitem{ex12} P.GUAN, XINAN.MA, {\em The Christoffel-Minkowski problem, I: Convexity of solutions of a Hessian equation},
Invent.Math.151(2003),553-577.MR 1961338 (1927,1928)

\bibitem{10} P.GUAN, C.REN and Z.WANG, {\em Global $C^{2}$ estimates for convex solutions of curvature equations},
Communications on Pure and Applied Mathematics, Vol.LXVIII,(2015) 1287-1325

\bibitem{11} QINIAN JIN, YANYAN LI, {\em Starshaped compact hypersurfaces with prescribed m-th
mean curvature in hyperbolic space}, Discrete and Continuous Dynamical Systems 15 (2006), 367-377.

\bibitem{12} PETER KOHLMANN, {\em Curvature measures and Steiner formulae in space forms}. Geom.Dedicata,
40(2):191-211,1991

\bibitem{ex14} H.LEWY, {\em On differential geometry in the large, I: Minkowski's problem},
Trans.Amer.Math.Soc.43 (1938),258-270.MR 1501942(1927)

\bibitem{ex15} L.NIRENBERG, {\em The Weyl and Minkowski problems in differential geometry in the large},
Comm.Pure Appl.Math.6(1953),337-394.MR 0058265 (1927)

\bibitem{13} V.I.OLIKER, {\em Existence and uniqueness of convex hypersurfaces with prescribed Gausssian
curvature in spaces of constant curvature}, Sem.Inst.Mate.Appl. "Giovanni Sansone", Univ. Studi Firenze,
1983, 1-39.

\bibitem{ex17} A.V.POGORELOV, {\em Regularity of a convex surface with given Gaussian curvature},
Mat.Sbornik N.S.31(73)(1952),88-103.MR 0052807(1927)

\bibitem{14} A.V.POGORELOV, {\em Extrinsic Geometry of Convex Surfaces}, Transl.Math.Monogr.35,
Amer.Math.Soc.,Providence, 1973. MR 0346714

\bibitem{ex19} A.V.POGORELOV, {\em The Minkowski Multidimensional Problem},
V.H. Winston, Washington,D.C., 1978.MR 0478079 (1927)

\bibitem{RW} CHANGYU REN, ZHIZHANG WANG, {\em On the curvature estimates for Hessian equations},
American Journal of Mathematics 141(5), 1281-1315, 2019

\bibitem{15} R.SCHNEIDER, {\em Convex bodies: The Brunn-Minkowski Theory}. Encyclopedia Math. Appl. 44
, Cambridge Univ.Press, Cambridge, 1993. MR 1216521

\bibitem{spruckX} J.SPRUCK, L.XIAO, {\em A note on star-shaped compact hypersurfaces with prescribed scalar curvature
in space forms}, Rev. Mat. Iberoam. 33(2017), 547-554.

\bibitem{16} GIONA VERONELLI, {\em Boundary structure of convex sets in the hyperbolic space}, Monatshefte f$\ddot{u}$r Mathematik, 188(3)
, 567-586, 2019






\end{thebibliography}
\end{document}